\documentclass[11pt,twoside, reqno]{article}

\usepackage{amssymb}
\usepackage{amsmath}
\usepackage{mathrsfs}
\usepackage{amsthm}
\usepackage{txfonts}
\usepackage{graphicx}
\usepackage{graphics}
\usepackage{subfigure}

\newtheorem{theorem}{Theorem}[section]
\newtheorem{lemma}[theorem]{Lemma}
\newtheorem{corollary}[theorem]{Corollary}

\theoremstyle{definition}
\newtheorem{remark}[theorem]{Remark}
\newtheorem{definition}[theorem]{Definition}
\newtheorem{example}[theorem]{Example}

\begin{document}

\title{\Large\bf Functional inequalities for starlike and convex functions associated with $g$-derivative operator \footnotetext{\hspace{-0.35cm} 2020 {\it
Mathematics Subject Classification}. {26A33; 30C45; 30C50; 33D05.}
\endgraf{\it Key words and phrases}. Starlike function, convex function, $g$-derivative operator, Fekete-Szeg\"{o} inequality, Toeplitz determinant, Hankel determinant.
\endgraf This work is supported by Natural Science Foundation of China(Grant Nos.\
12261068).}}
\author{An Huang; Pinhong Long; Halit Orhan; Huo Tang}
\date{ }
\maketitle


\begin{center}
\begin{minipage}{13.5cm}\small
{{\bf Abstract.} In this paper we first introduce a new class of derivative operators, termed $g$-derivative operators, which unifies the $q$-derivative, $(p,q)$-derivative and $(\alpha,\beta,\gamma)$-derivative operators, 
even classical derivative in the literature. Based on this generalized framework, we define two novel function classes, specifically $g$-starlike functions and $g$-convex functions. Further, 
we employ the subordination principle of analytic functions to conduct the coefficient estimations for such function classes, and subsequently derive the bounds for the corresponding Fekete-Szeg\"{o} inequality, 
Toeplitz determinants and Hankel determinants.}
\end{minipage}
\end{center}

\section{Introduction\label{s1}}

Let $\mathbb{A}$ be the subset of $H(\mathbb{U})$ which includes all functions satisfying the standardized conditions
\begin{align}
f(0)={f}' (0)-1=0\nonumber,
\end{align}
here $H(\mathbb{U})$ denotes the set of all analytic functions on the open unit disk
\begin{align}
\mathbb{U} = \left \{ z\ \in \mathbb{C}: \ \ \left | z \right | \ < 1 \right \}\nonumber.
\end{align}
Further, for any $f\in \mathbb{A} $, its Taylor-Maclaurin expansion is given with
\begin{align}\label{e1.1}
f(z)=z+\sum^{\infty}_{n=2}a_{n}z^{n}.
\end{align}
Additionally, the subclass $\mathbb{S}$ of $\mathbb{A}$ is defined as the set of all univalent analytic functions in $\mathbb{U}$.

For two analytic functions $f$ and $g$ in $\mathbb{U}$, if they satisfy the following relation of partial order: 
\begin{align}
g(z)\prec f(z)\ \  (g\prec f)\nonumber,
\end{align}
then we call that $g$ is subordinate to $f$ $^{\cite{1Miller}}$.

Furthermore, if $g$ is a univalent analytic functions in $\mathbb{U}$, there has an equivalent relation $^{\cite{2Miller}}$
\begin{align}
g(z)\prec f(z),\ \ (z\in \mathbb{U} )\Leftrightarrow g(\mathbb{U})\subset f(\mathbb{U}),\ f(0)=g(0) \nonumber.
\end{align}

For $f\in \mathbb{A} $, if
\begin{align}
\mathfrak{R} \left ( \frac{z{f}'\left ( z \right )  }{f\left ( z \right ) }  \right ) >0,\ \ \ \left ( z\in \mathbb{U}  \right )  \nonumber
\end{align}
holds, then we call that $f$ belongs to the class $\mathcal{S}^{\ast } $ of starlike functions.

Similarly, for $f\in \mathbb{A} $, if 
\begin{align}
\mathfrak{R} \left (1+ \frac{z{f}'' \left ( z \right )  }{{f}' \left ( z \right ) }  \right ) >0,\ \ \ \left ( z\in \mathbb{U}  \right )  \nonumber
\end{align}
then we call that $f$ belongs to the class $\mathcal{C} $ of convex functions.

Through certain subordination principles of analytic functions, Ma-Minda$^{\cite{6George}}$ defined the class $\mathcal{S}^{\ast } \left ( \phi  \right ) $ of starlike functions and the class $\mathcal{C} \left ( \phi  \right ) $ of convex functions by
\begin{align}
\mathcal{S}^{\ast }  \left ( \phi  \right ) =\left \{ f\in \mathbb{A}:\frac{z{f}'(z) }{f(z)}  \prec \phi (z) \right \} \nonumber
\end{align}
and
\begin{align}
\mathcal{C}\left ( \phi  \right ) =\left \{ f\in \mathbb{A}:1+\frac{z{f}'' (z) }{{f}' (z)}  \prec \phi (z) \right \}, \nonumber
\end{align}
respectively, where $\phi (z)$ is analytic in $\mathbb{U}$, and satisfies $\mathfrak{R} \left ( \phi (z) \right ) >0, \phi (0)=1$.

For $f\in \mathbb{A} $, Thomas and Halim$^{\cite{8Thomas}}$ ever introduced the following symmetric Toeplitz determinant
$$\begin{gathered}\mathcal{T}_j(n)=\begin{vmatrix}a_n&a_{n+1}&\cdots&a_{n+j-1}\\a_{n+1}&a_n&\cdots&a_{n+j}
\\\vdots&\vdots&\ddots&\vdots\\a_{n+j-1}&a_{n+j}&\cdots&a_n\end{vmatrix}\end{gathered}$$
for $j,n\in\mathbb{N}$. Note that 
$$\mathcal{T}_3(1)=\begin{vmatrix}1&a_2&a_3\\a_2&1&a_2\\a_3&a_2&1\end{vmatrix}=(1+a_3-2a_2^2)(1-a_3).$$

Besides, for $f\in \mathbb{A} $, Pommerenke$^{\cite{9Pommerenke,10Pommerenke}}$ first introduced the Hankel determinant
$$\mathcal{H}_j(n)=\begin{vmatrix}a_n&a_{n+1}&\cdots&a_{n+j-1}\\a_{n+1}&a_{n+2}&\cdots&a_{n+j}
\\\vdots&\vdots&\ddots&\vdots
\\a_{n+j-1}&a_{n+j}&\cdots&a_{n+2j-2}\end{vmatrix}$$
for $j, n\in\mathbb{N}$. Clearly,
$$\mathcal{H}_2(1)=\begin{vmatrix}1&a_2\\a_2&a_3\end{vmatrix}=a_3-a_2^2.$$

In recent years, many researchers started to study $q$-calculus, which has been widely applied in many fields. The $q$-calculus, which is grounded in quantum groups and Lie algebras, provides a framework for analyzing symmetries and conservation laws in the microscopic realm. By introducing the parameter $q$ (usually taken as a real number), researchers can construct a series of mathematical models paralleled to classical calculus. Below, we review some basic concepts of $q$-calculus.

For $q\in (0,1)$, the $q$-number and $q$-factorial$^{\cite{3Ma}}$ are defined respectively as
\begin{align}\nonumber
[\lambda]_q=\begin{cases}\frac{1-q^\lambda}{1-q},\ &(\lambda\in\mathbb{C}),
\\\sum_{k=0}^{n-1}q^k=1+q+q^2+...+q^{n-1},\ &(\lambda=n\in\mathbb{N})\end{cases}
\end{align}
and
\begin{align}\nonumber
[n]_q!=\begin{cases}1,\ &(n=0),
\\\prod^{n}_{k=1}[k]_{q},\ &(n\in\mathbb{N}). \end{cases}
\end{align}

The $q$-derivative operator or $q$-differential operator $D_{q} $ $^{\cite{4Jackson,5Jackson}}$ for a function $f(z)$ is defined as
\begin{align}\label{e1.2}
(D_{q} f)(z)=\left\{\begin{matrix}\frac{f(z)-f(qz)}{(1-q)z},\  &(z\ne 0),
 \\{f}'(0),\ &(z=0) ,
\end{matrix}\right.
\end{align}
and consequently,
\begin{align}
\lim_{q \to 1^{-} } (D_{q}f )(z)=\lim_{q \to 1^{-} } \frac{f(qz)-f(z)}{(q-1)z} ={f}' (z). \nonumber
\end{align}
Furthermore, combining (\ref{e1.1}) and (\ref{e1.2}), we derive that
\begin{align}
 (D_{q}f )(z)=1+\sum_{n=2}^{\infty } \left [ n \right ] _{q} a_{n} z^{n-1} .\nonumber
\end{align}

Early on, the $q$-derivative operator$^{\cite{18F}}$ pioneered by Jackson et al. stands as a crucial instrument in quantum group theory and special function theory. 
Subsequently, the $(p,q)$-derivative operator$^{\cite{15Srivastava}}$, grounded in dual-parameter regulation, further expands this structural paradigm. 
Later the advancements led to the $(\alpha,\beta,\gamma,)$-derivative operator$^{\cite{16Vahid,17Kanas,20Long}}$, which employs a triparametric system to 
enhance adaptability of complex geometric configurations and maintain algebraic consistency. Building upon these foundational works, 
our study innovatively unifies these representative classes of specialize derivative operators into g-derivative operator. Notably, 
this paper systematically introduces a novel class of derivative operators embedded within a multidimensional parameter space. 
This achievement not only refines the differential operator framework in function theory but also establishes a unified mathematical language for fractional calculus and quantum group theory. 
Next, we introduce a new class of derivative operators.
\begin{definition}
Let $g(z)=z+\sum_{n=2}^{\infty}g_{n}z^{n}$ be an analytic function in $ \mathbb{A} $. For $f\in \mathbb{A}$, the $g$-derivative operator $D_{g}$ is defined as
\begin{align}\nonumber
D_{g}f(z)=&\frac{1}{z}\{f(z)\ast g(z)\}\\\nonumber
=&\frac{1}{z}\left\{\left(z+\sum_{n=2}^{\infty}a_{n}z^{n}\right) \ast \left(z+ \sum_{n=2}^{\infty}g_{n}z^{n}\right)\right\}\\\nonumber
=&\frac{1}{z}\left\{z+ \sum_{n=2}^{\infty}[n]_{g}a_{n}z^{n}\right\}\nonumber,
\end{align}
where $[n]_{g}=g_{n}$ and $\ast$ is a convolution.

The following are some specific cases of the $g$-derivative operator.
\end{definition}
\begin{remark}
The function $g(z)$ is defined as
\begin{align}
g(z)=\frac{2(1-\gamma)z}{(1-\alpha z)(1-\beta z)},\nonumber
\end{align}
where $\alpha\in[-1,1], \beta\in[-1,1], \alpha\beta\neq\pm1$ and $\gamma\in[0,1]$. If $f\in \mathbb{A}$  and $\alpha \neq \beta$, then the $g$-derivative operator is given by
\begin{align}\nonumber
D_{g}f(z)=: D_{\alpha,\beta,\gamma}f(z)=&\frac{1}{z}\{f(z)\ast g(z)\}\\\nonumber
=&\frac{1}{z}\left\{\left(z+\sum_{n=2}^{\infty}a_{n}z^{n}\right) \ast \left(z+ \sum_{n=1}^{\infty}2(1-\gamma)\left(\frac{\alpha^{n}-\beta^{n}}{\alpha-\beta}\right)z^{n}\right)\right\}\\\nonumber
=&\frac{1}{z}\left\{z+ \sum_{n=2}^{\infty}[n]_{\alpha,\beta,\gamma}a_{n}z^{n}\right\},\nonumber
\end{align}
where
\begin{align}
[n]_{\alpha,\beta,\gamma}=2(1-\gamma)\left(\frac{\alpha^{n}-\beta^{n}}{\alpha-\beta}\right),\ \ \ (n=2,3,...)\nonumber.
\end{align}
Here the different cases are obtained when the parameters take different values.

(1) If $\gamma=1/2, \alpha=p, \beta=q$, where $p,q\in[-1,1], pq\neq\pm1$, and $p \neq q$, then $g(z)=\frac{z}{(1-pz)(1-qz)}$, and the $g$-derivative operator degenerates into the $(p, q)$-derivative operator.

(2) If $\gamma=1/2, \alpha=1, \beta=q$, where $q\in(0,1)$, then $g(z)=\frac{z}{(1-qz)(1-z)}$, and the $g$-derivative operator reduces to the $q$-derivative operator.

(3) Let $\gamma=1/2, \alpha=1, \beta=q$, where $q\in(0,1)$. If $q\rightarrow1^{-}$, then $g(z)=\frac{z}{(1-z)^{2}}$, and the corresponding $g$-derivative operator is exactly the classical derivative operator.
\end{remark}

According to the statement above, we define the class of $g$-starlike functions and the class of $g$-convex functions in the unit disk by using the $g$-derivative operator. Then we investigate several properties of these new function subclasses, such as the estimations for the first few coefficients, Fekete-Szeg\"{o} inequalities, Toeplitz determinants and Hankel determinants. Additionally, we provide some remarks, examples and corollaries.

\section{The Class $\mathcal{S}^{\ast }_{g}(\phi)$ Of Starlike Functions }

Let $\mathcal{P}$ be the class of all analytic and univalent functions $p(z)$ of the form:
\begin{equation}\label{e2.1}
p(z) = 1 + \sum_{n=1}^{\infty} c_n z^n, \quad (z \in \mathbb{U})
\end{equation}
satisfying $\Re[p(z)] > 0$ and $p(0) = 1$.

Now we first present several relevant lemmas.
\begin{lemma} $^{\cite{11Kanas}}$
If $p \in \mathcal{P}$, then the sharp estimate 
\begin{equation*}
\left|c_{n}\right| \leq 2, \quad (n \in \mathbb{N})
\end{equation*}
holds. In particular, the equality is achieved for the function
\begin{equation*}
p(z) = \frac{1+z}{1-z} = 1 + \sum_{k=1}^{\infty} 2 z^k.
\end{equation*}
\end{lemma}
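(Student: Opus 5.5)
The plan is to prove this classical Carathéodory coefficient bound directly from the positivity of $\Re p$ on circles, using the orthogonality of $e^{ik\theta}$ on $[0,2\pi]$. Fix $r\in(0,1)$ and write $p(re^{i\theta})=1+\sum_{k\ge1}c_k r^k e^{ik\theta}$, which converges absolutely and uniformly in $\theta$. Then
\[
2\,\Re p(re^{i\theta}) = p(re^{i\theta})+\overline{p(re^{i\theta})} = 2+\sum_{k\ge1}\left(c_k r^k e^{ik\theta}+\overline{c_k} r^k e^{-ik\theta}\right).
\]
Multiply by $e^{-in\theta}$ and integrate over $[0,2\pi]$; termwise integration is justified by uniform convergence. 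Only the term $c_n r^n$ survives, which gives
\[
c_n r^n=\frac{1}{\pi}\int_0^{2\pi}\Re p(re^{i\theta})\,e^{-in\theta}\,d\theta .
\]
Integrating the same expansion without the weight gives $\frac{1}{2\pi}\int_0^{2\pi}\Re p(re^{i\theta})\,d\theta=1$, the mean value property for the harmonic function $\Re p$.

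Next I use $\Re p>0$. Since $|e^{-in\theta}|=1$ and the integrand's real part is positive,
\[
|c_n|r^n\le \frac{1}{\pi}\int_0^{2\pi}\Re p(re^{i\theta})\,d\theta = 2.
\]
Letting $r\to1^-$ yields $|c_n|\le 2$ for every $n\in\mathbb{N}$. Only $\Re p>0$ and $p(0)=1$ are used. The univalence assumed in the paper's definition of $\mathcal{P}$ is not needed.

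For sharpness, I expand $p(z)=\frac{1+z}{1-z}=(1+z)\sum_{k\ge0}z^k=1+2\sum_{k\ge1}z^k$, so $c_n=2$ for all $n$. This function maps $\mathbb{U}$ onto the right half-plane, since $\Re\frac{1+z}{1-z}=\frac{1-|z|^2}{|1-z|^2}>0$. It is a Möbius map, hence univalent, so it belongs to $\mathcal{P}$ and attains equality.

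The only delicate step is the passage from $r<1$ to the boundary. Working on $|z|=r<1$ and then taking the limit avoids any appeal to boundary values. The interchange of sum and integral is immediate from uniform convergence on compact circles.
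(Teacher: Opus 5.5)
Your proof is correct and complete. The paper gives no proof of this lemma; it quotes it from the literature as the classical Carath\'eodory coefficient bound. So your Fourier-coefficient computation on $|z|=r$ supplies a self-contained justification where the paper relies on citation.

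Your argument is essentially the fixed-radius form of the Herglotz-representation proof, which writes $c_n=2\int_0^{2\pi}e^{-int}\,d\mu(t)$ for a probability measure $\mu$. Working at $r<1$ and then letting $r\to1^-$ avoids the weak-limit step needed to construct $\mu$. The Herglotz route, in return, makes the extremal functions transparent, but the statement does not ask for them.

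Your remark that only $\Re p>0$ and $p(0)=1$ are used matters here. The paper's definition of $\mathcal{P}$ requires univalence, yet in the proof of Theorem 2.7 the lemma is applied to $p=(1+u)/(1-u)$ for an arbitrary Schwarz function $u$. Such a $p$ need not be univalent: $u(z)=z^{2}$ gives the even function $p_2$ of Lemma 2.2. Your version of the lemma, without univalence, is therefore the one those later applications actually require.
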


\begin{lemma}$^{\cite{12Hayman}}$
If $p \in \mathcal{P}$, then 
\begin{equation*}
\left|c_{2} - \mu c_{1}^{2}\right| \leq 2 \max\{1, |2\mu - 1|\}
\end{equation*}
for any complex number $\mu$, and the sharp of this result is true for the functions
\begin{equation*}
p_1(z) = \frac{1+z}{1-z} \quad \text{and} \quad p_2(z) = \frac{1+z^2}{1-z^2}, \quad (z \in \mathbb{U}).
\end{equation*}
\end{lemma}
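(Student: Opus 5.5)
Write $c_1=2\zeta_1$. The standard description of the second coefficient (Libera–Złotkiewicz, or equivalently the Schur parametrization of $\mathcal{P}$) is
\[
c_2=2\zeta_1^2+2(1-|\zeta_1|^2)\zeta_2,\qquad |\zeta_1|\le 1,\ |\zeta_2|\le 1 .
\]
I will use this as the main tool and prove the inequality by a direct estimate.

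Substituting,
\[
c_2-\mu c_1^2=2(1-2\mu)\zeta_1^2+2(1-|\zeta_1|^2)\zeta_2 .
\]
Put $t=|\zeta_1|\in[0,1]$. The triangle inequality together with $|\zeta_2|\le1$ gives
\[
|c_2-\mu c_1^2|\le 2\bigl(|1-2\mu|\,t^2+1-t^2\bigr)=2\bigl(1+(|2\mu-1|-1)t^2\bigr).
\]
The right-hand side is affine in $t^2$. Its maximum over $t^2\in[0,1]$ is therefore attained at an endpoint, $t=0$ or $t=1$. This maximum is $2\max\{1,|2\mu-1|\}$, which is the stated bound.

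Sharpness is a computation. For $p_1(z)=(1+z)/(1-z)$ we have $c_1=c_2=2$, so $|c_2-\mu c_1^2|=2|1-2\mu|$. For $p_2(z)=(1+z^2)/(1-z^2)$ we have $c_1=0$ and $c_2=2$, so the value is $2$. Whichever of $1$ and $|2\mu-1|$ is larger, one of these two functions attains the bound.

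The only nontrivial input is the parametrization of $c_2$. If it cannot be quoted, I would derive it from the Schur algorithm. Write $p=(1+\omega)/(1-\omega)$ with $\omega$ a Schwarz function, $\omega(z)=w_1z+w_2z^2+\cdots$. Then
\[
c_1=2w_1,\qquad c_2=2w_2+2w_1^2 .
\]
The classical Schwarz-function estimate $|w_2|\le 1-|w_1|^2$ then gives exactly the representation above.

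Alternatively, one can avoid the representation altogether. The substitution gives $c_2-\mu c_1^2=2\bigl(w_2+(1-2\mu)w_1^2\bigr)$, and the same affine-in-$|w_1|^2$ estimate applies directly. I expect justifying $|w_2|\le1-|w_1|^2$ to be the main, though standard, obstacle. It follows by applying the Schwarz lemma to $\omega(z)/z$, composed with the disc automorphism that sends $w_1$ to $0$.
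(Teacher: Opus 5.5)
Your argument is correct and complete. The paper gives no proof of this lemma; it only cites it from the literature. Your write-up therefore replaces a citation with a self-contained argument, and it is the standard one: pass to the Schwarz function $\omega=(p-1)/(p+1)$, read off $c_1=2w_1$ and $c_2=2w_2+2w_1^2$, and use $|w_2|\le 1-|w_1|^2$. One small point in that last step: split off the degenerate case $|w_1|=1$ before applying the disc automorphism. In that case $\omega(z)/z$ is a unimodular constant and $w_2=0$.

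What your route gives beyond the citation is the intermediate estimate
\[
|c_2-\mu c_1^2|\le 2+\tfrac12\bigl(|2\mu-1|-1\bigr)|c_1|^2 .
\]
Its case $\mu=\tfrac12$ is the second inequality of Lemma 2.4. The resulting bound, restricted to real $\mu$, is exactly Lemma 2.3. It also shows that when $|2\mu-1|<1$, equality can occur only if $c_1=0$.

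That last observation deserves a sentence in your write-up. You never use univalence of $p$, and you should state explicitly that you work in the usual Carath\'eodory class. The paper's definition of $\mathcal{P}$ requires $p$ to be univalent, but $p_2(-z)=p_2(z)$, so $p_2$ is not univalent. Under that literal definition, no member of $\mathcal{P}$ attains the bound when $|2\mu-1|<1$: a univalent $p$ has $c_1=p'(0)\neq 0$, so your estimate is strictly below $2$. The sharpness half of the lemma therefore holds only for the standard non-univalent class $\mathcal{P}$. In that class your verification with $p_1$ and $p_2$ is correct.
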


\begin{lemma}$^{\cite{13Singh}}$
Suppose that $p \in \mathcal{P}$ and $\mu \in \mathbb{R}$. Then
\begin{equation*}
\left|c_2 - \mu c_1^2\right| \leq
\begin{cases}
-4\mu + 2, & \mu \leq 0, \\
2, &  0 \leq \mu \leq 1, \\
4\mu - 2, &  \mu \geq 1,
\end{cases}
\end{equation*}
and this result is sharp.
\end{lemma}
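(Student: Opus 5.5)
The inequality for real $\mu$ can be obtained directly from the complex-$\mu$ bound of the preceding lemma (Hayman's), which gives
\[
\left|c_2-\mu c_1^2\right|\le 2\max\{1,|2\mu-1|\}.
\]
I will split according to the sign of $2\mu-1$ and compare $|2\mu-1|$ with $1$.

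If $\mu\le 0$, then $2\mu-1\le -1$, so $|2\mu-1|=1-2\mu\ge 1$. The maximum is therefore $1-2\mu$, and the bound becomes $2(1-2\mu)=-4\mu+2$.

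If $0\le\mu\le 1$, then $-1\le 2\mu-1\le 1$, so the maximum is $1$ and the bound is $2$.

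If $\mu\ge 1$, then $2\mu-1\ge 1$, and the bound is $2(2\mu-1)=4\mu-2$.

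At $\mu=0$ and at $\mu=1$ the neighbouring expressions agree ($2$ in both cases), so the overlapping boundary cases are consistent.

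For sharpness I will use the two extremal functions named in Hayman's lemma.

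\emph{Outer ranges, via $p_1(z)=\frac{1+z}{1-z}$.} Here $c_1=c_2=2$, so $c_2-\mu c_1^2=2-4\mu$. For $\mu\le 0$ this equals $-4\mu+2$, and for $\mu\ge 1$ its modulus is $4\mu-2$. Equality therefore holds throughout both outer ranges.

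\emph{Middle range, via $p_2(z)=\frac{1+z^2}{1-z^2}=1+2z^2+\cdots$.} Here $c_1=0$ and $c_2=2$, so $|c_2-\mu c_1^2|=2$ for every $\mu$. This gives equality on $0\le\mu\le1$.

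There is no real obstacle. The only care needed is to check that the piecewise maximum matches each case exactly and that the chosen extremal functions attain equality on every subinterval, including the endpoints $\mu=0$ and $\mu=1$. If one preferred a self-contained argument not relying on Hayman's lemma, I would use the standard representation $2c_2=c_1^2+(4-c_1^2)x$ with $|x|\le1$ and $|c_1|\le 2$ from the coefficient lemma. Then I would maximize over $|x|$ and $|c_1|$, which reduces to maximizing a quadratic in $|c_1|$ on $[0,2]$.
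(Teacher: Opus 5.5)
Your argument is correct. The paper gives no proof of this lemma; it only quotes it with a citation (it is the classical Ma--Minda lemma). In the literature it is proved directly from the Carath\'eodory-class representation $2c_2=c_1^2+(4-|c_1|^2)x$, $|x|\le 1$, which is essentially your fallback sketch.

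Your main route is different and shorter. You observe that, inside the paper, the statement is just Lemma 2.2 restricted to real $\mu$. Evaluating $2\max\{1,|2\mu-1|\}$ on the three ranges gives the three branches. Then $p_1$ (where $c_2-\mu c_1^2=2-4\mu$) attains equality on the outer ranges, and $p_2$ (where $c_1=0$, $c_2=2$) attains it on the middle range.

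This buys brevity, and it shows that Lemma 2.3 is logically redundant once Lemma 2.2 is granted. It gives up two things. First, self-containedness, since Lemma 2.2 is itself only cited. Second, the finer information the direct proof delivers: from the equality cases in $|c_2-\mu c_1^2|\le 2-\bigl(\tfrac12-|\tfrac12-\mu|\bigr)|c_1|^2$ one gets the full list of extremal functions:
\begin{itemize}
\item rotations of $p_1$ for $\mu<0$ or $\mu>1$;
\item rotations of $p_2$ for $0<\mu<1$;
\item one-parameter families at $\mu=0$ and $\mu=1$.
\end{itemize}

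One caveat on sharpness. The paper's definition of $\mathcal{P}$ literally requires $p$ to be univalent, but $p_2(z)=\frac{1+z^2}{1-z^2}$ is even and hence not univalent. Your sharpness argument is valid for the standard Carath\'eodory class, where univalence is not imposed, and that is what the cited lemma means. Under the paper's literal definition, the value $2$ would not be attained for $0<\mu<1$. There the inequality above forces $c_1=0$ and $|c_2|=2$, and the only such functions with positive real part are rotations of $p_2$. You should say explicitly that $\mathcal{P}$ is read as the usual class of functions with positive real part and $p(0)=1$.
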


\begin{lemma}$^{\cite{14Pommerenke}}$
If $p \in \mathcal{P}$, then
\begin{align*}
\left|c_{n+k} - \mu c_n c_k\right| &\leq 2, \ \   0 \leq \mu \leq 1, \\
\left|c_2 - \frac{1}{2} c_1^2\right| &\leq 2 - \frac{\left|c_1\right|^2}{2}, \\
\left|c_n c_m - c_\ell c_k\right| &\leq 4, \ \   n+m = \ell+k, \\
\left|c_{n+2k} - \mu c_n c_k^2\right| &\leq 2(1+2\mu), \ \   \mu \in \mathbb{R}.
\end{align*}
\end{lemma}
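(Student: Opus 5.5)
The plan is to derive all four inequalities from positivity of the Carath\'eodory--Toeplitz forms attached to $p\in\mathcal{P}$. By the Herglotz representation, $c_j=2\int e^{-ijt}\,d\nu(t)$ for a probability measure $\nu$. Hence, with $c_0:=2$ and $c_{-j}=\overline{c_j}$, every matrix $(c_{j_s-j_r})_{r,s}$ is positive semidefinite.

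\emph{The basic estimate.} Fix $n,k\ge1$ and take the $3\times3$ principal minor on the indices $0,n,n+k$. Its off-diagonal entries are $c_n$, $c_k$ and $c_{n+k}$. Take the Schur complement with respect to the middle entry $2$; positivity of the resulting $2\times2$ matrix gives
\begin{equation*}
\Bigl|c_{n+k}-\tfrac12 c_nc_k\Bigr|^2\le\Bigl(2-\tfrac{|c_n|^2}{2}\Bigr)\Bigl(2-\tfrac{|c_k|^2}{2}\Bigr),
\end{equation*}
so by AM--GM $|c_{n+k}-\tfrac12c_nc_k|\le 2-\tfrac14(|c_n|^2+|c_k|^2)$. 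With $n=k=1$ this is exactly the second inequality.

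\emph{The first inequality.} Write $c_{n+k}-\mu c_nc_k=(c_{n+k}-\tfrac12c_nc_k)-(\mu-\tfrac12)c_nc_k$. For $0\le\mu\le1$ we have $|\mu-\tfrac12|\le\tfrac12$, so the modulus is at most
\begin{equation*}
2-\tfrac14\bigl(|c_n|^2+|c_k|^2\bigr)+\tfrac12|c_n||c_k|\le 2 .
\end{equation*}

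\emph{The third inequality.} Put $N=n+m=\ell+k$, $a=|c_n||c_m|$ and $b=|c_\ell||c_k|$. Trivially $|c_nc_m-c_\ell c_k|\le a+b$. Alternatively, insert $2c_N$ and apply the basic estimate twice:
\begin{equation*}
|c_nc_m-c_\ell c_k|\le|c_nc_m-2c_N|+|2c_N-c_\ell c_k|\le 8-\tfrac12\bigl(|c_n|^2+|c_m|^2+|c_\ell|^2+|c_k|^2\bigr)\le 8-a-b .
\end{equation*}
Since $\min(a+b,\,8-a-b)\le4$, the bound $4$ follows.

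\emph{The fourth inequality.} For $0\le\mu\le1$, split
\begin{equation*}
c_{n+2k}-\mu c_nc_k^2=(c_{n+2k}-\mu c_nc_{2k})+\mu c_n(c_{2k}-c_k^2).
\end{equation*}
The first term is at most $2$ by the first inequality (with $k$ replaced by $2k$). The second is at most $\mu\cdot2\cdot2=4\mu$, using $|c_n|\le2$ from the first lemma of this section and the first inequality with $\mu=1$. This gives $2(1+2\mu)$.

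\emph{Main obstacle: the range of $\mu$ in the fourth inequality.} The statement claims the bound for all $\mu\in\mathbb{R}$, but it cannot hold in that generality. For $\mu<-\tfrac12$ the right-hand side is negative. For $\mu>1$, the extremal function $p(z)=(1+z)/(1-z)$ gives $|2-8\mu|=8\mu-2>2+4\mu$. So I would prove the fourth inequality for $0\le\mu\le1$, the range where I expect Pommerenke's original result to apply, and flag that the ``$\mu\in\mathbb{R}$'' hypothesis must be restricted accordingly. Beyond this, the only delicate point is choosing the correct Schur complement in the basic estimate; everything else is elementary bookkeeping.
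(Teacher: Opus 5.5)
The paper gives no proof of this lemma: it is quoted from Pommerenke and never invoked later (only Lemmas 2.1--2.3 are used in Sections 2 and 3). Your Herglotz/Toeplitz-positivity argument is therefore an independent, self-contained route, and it checks out. The Schur complement at the middle index of $\{0,n,n+k\}$ gives $|c_{n+k}-\frac12c_nc_k|^2\le(2-\frac12|c_n|^2)(2-\frac12|c_k|^2)$. The first three inequalities, and the case $0\le\mu\le1$ of the fourth, then follow exactly as you write. Your argument works on the whole Carath\'eodory class, so the paper's unusual univalence requirement in $\mathcal{P}$ plays no role.

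Compare this with the standard derivation of the second inequality via the Schwarz lemma applied to $w=(p-1)/(p+1)$, where $c_2-\frac12c_1^2=2w_2$ and $|w_2|\le1-|w_1|^2$. Your Schur complement is the same fact in Toeplitz form, but it holds for arbitrary $n,k$ at once, and that is what makes the first and third inequalities come for free.

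Your objection to the fourth inequality is right, and it can be sharpened. The function $(1+z)/(1-z)$ is univalent, so it lies in the paper's $\mathcal{P}$, and it gives $|2-8\mu|=2-8\mu>2+4\mu$ for every $\mu<0$, not just for $\mu<-\frac12$. For every $\mu>1$ it gives $8\mu-2>2+4\mu$. So $[0,1]$ is exactly the range on which the bound $2(1+2\mu)$ holds. The ``$\mu\in\mathbb{R}$'' in the statement is a misquotation, harmless here only because the inequality is never used.
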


Herein, we are ready to introduce the class $\mathcal{S}^{\ast }_{g}(\phi)$ of starlike functions.

\begin{definition}
If the following subordination relation
\begin{align}
\frac{zD_{g}f(z)}{f(z)}\prec \phi(z),\ \  (z\in\mathbb{U})\nonumber
\end{align}
is satisfied for $f\in \mathbb{A}$, where $g\in \mathbb{A}$, then $f$ is said to belong to $f\in\mathcal{S}^{\ast }_{g}(\phi)$.

For simplicity, the function $f$ is said to belong to $f\in\mathcal{S}^{\ast }_{\alpha, \beta, \gamma}(\phi)$ when $g(z)=\frac{2(1-\gamma)z}{(1-\alpha z)(1-\beta z)}$ in Remark 1.2. Similarly,  $f\in\mathcal{S}^{\ast }_{p,q}(\phi)$, $f\in\mathcal{S}^{\ast }_{q}(\phi)$ and $f\in\mathcal{S}^{\ast }_{1^{-}}(\phi)$ respectively when $g(z)$ corresponds to the cases (1), (2), and (3) in this Remark.

\end{definition}

Now, an example related to the function class $\mathcal{S}^{\ast }_{g}(\phi)$ is subsequently provided. 
\begin{example}
A function $f\in\mathcal{S}^{\ast }_{1^{-}}(\phi)$ if and only if there exists an analytic function $h\prec\phi(z)$ such that
\begin{align}\nonumber
f(z)=z\exp\int^{z}_{0}\frac{h(t)-1}{t}dt,\ \ \ z\in\mathbb{U}.
\end{align}
This integral representation supplies many examples of functions in the class $\mathcal{S}^{\ast }_{1^{-}}(\phi)$. Let
\begin{align}
h(t)=\phi(t^{n})=(1+t^{n})^{\lambda},\nonumber
\end{align}
where $n\in\mathbb{N}$, and $t\in\mathbb{U}$.

Through the straightforward computation, the function (as shown in Figure 1)
\begin{align}
\varphi(z)=&z\exp\int^{z}_{0}\frac{(1+t^{n})^{\lambda}-1}{t}dt\nonumber
\\=&z+\frac{\lambda}{n}z^{n+1}+\frac{\lambda^{2}(n+2)-n\lambda}{4n^{2}}z^{2n+1}\nonumber
\\+&\frac{\lambda((2n^{2}+9n+6)\lambda^{2}-(6n^{2}+9n)\lambda+4n^{2})}{36n^{3}}z^{3n+1}+\cdot\cdot\cdot\nonumber
\end{align}
is an extremal function in $\mathcal{S}^{\ast }_{1^{-}}(\phi)$.
\begin{figure}[!ht]
  \centering
  \includegraphics[width=6cm]{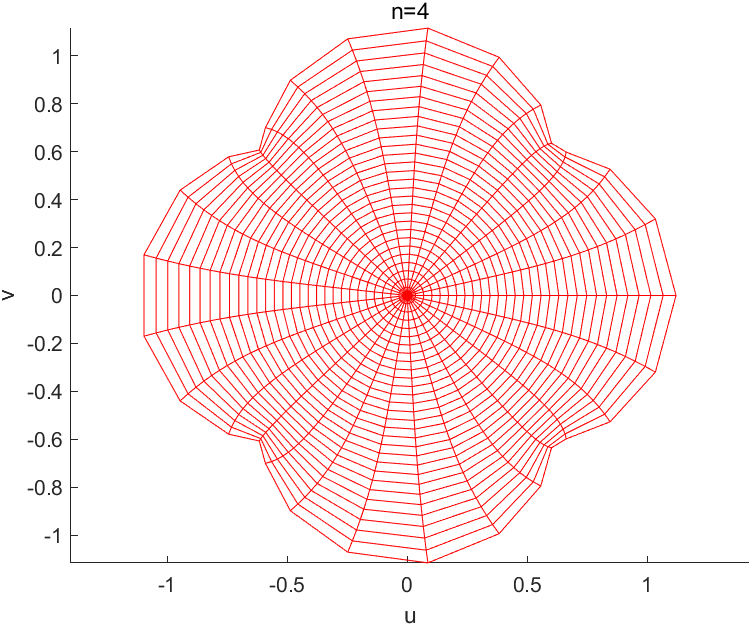}
  \includegraphics[width=6cm]{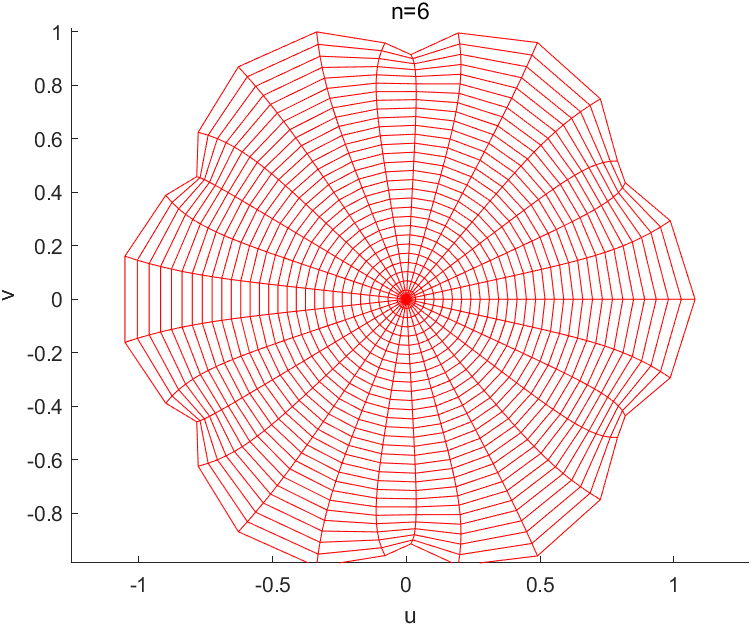}
  \caption{The image of $\mathbb{U}$ under $\varphi(z)$ for $n=4, 6$ and $\lambda=\frac{1}{2}$.}
  \label{4}
\end{figure}
\end{example}
Below, we study the coefficients of the function class $\mathcal{S}^{\ast }_{g}(\phi)$ and Fekete-Szeg\"{o} inequality. First, we estimate the initial coefficients of the function class $\mathcal{S}^{\ast }_{g}(\phi)$.

\begin{theorem}
Suppose that $f$ is given by (\ref{e1.1}). If $f$ belongs to the function class $\mathcal{S}^{\ast }_{g}(\phi)$, then
\begin{align}
\mid a_{2}\mid \leq \frac{B_{1}}{[2]_{g}-1}\nonumber,
\end{align}
\begin{align}\nonumber
\mid a_{3}\mid \leq\frac{B_{1}}{[3]_{g}-1}\max\left\{1, \left|\frac{B_{2}}{B_{1}}+\frac{B_{1}}{[2]_{g}-1}\right|\right\}.\nonumber
\end{align}
\end{theorem}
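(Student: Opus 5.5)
The approach is the standard Ma--Minda subordination argument. Write $\phi(z)=1+B_1z+B_2z^2+\cdots$ with $B_1>0$; this normalisation is implicit in the statement. Since $zD_gf(z)/f(z)\prec\phi(z)$, there is a Schwarz function $w$ with $zD_gf(z)/f(z)=\phi(w(z))$. We pass to the Carathéodory class through
\[
p(z)=\frac{1+w(z)}{1-w(z)}=1+c_1z+c_2z^2+\cdots\in\mathcal{P},
\]
which gives $w(z)=\frac{p(z)-1}{p(z)+1}=\frac{c_1}{2}z+\frac12\Big(c_2-\frac{c_1^2}{2}\Big)z^2+\cdots$. Substituting this into $\phi$ yields
\[
\phi(w(z))=1+\frac{B_1c_1}{2}z+\Big[\frac{B_1}{2}\Big(c_2-\frac{c_1^2}{2}\Big)+\frac{B_2c_1^2}{4}\Big]z^2+\cdots .
\]

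Next we expand the left-hand side. By Definition of $D_g$ we have $zD_gf(z)=z+\sum_{n\ge2}[n]_ga_nz^n$. Dividing by $f(z)=z+a_2z^2+a_3z^3+\cdots$ gives
\[
\frac{zD_gf(z)}{f(z)}=1+([2]_g-1)a_2z+\Big[([3]_g-1)a_3-([2]_g-1)a_2^2\Big]z^2+\cdots .
\]
Comparing coefficients, we obtain
\[
([2]_g-1)a_2=\frac{B_1c_1}{2},
\]
\[
([3]_g-1)a_3=\frac{B_1}{2}\Big(c_2-\frac{c_1^2}{2}\Big)+\frac{B_2c_1^2}{4}+([2]_g-1)a_2^2 .
\]
The bound on $|a_2|$ then follows at once from Lemma~2.1 ($|c_1|\le2$), which gives $|a_2|\le B_1/([2]_g-1)$. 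Here we use that $[2]_g-1\neq0$, and we read the denominators as $|[n]_g-1|$ if $[n]_g$ is complex.

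For $a_3$ we substitute $a_2^2=B_1^2c_1^2/(4([2]_g-1)^2)$ and get
\[
a_3=\frac{B_1}{2([3]_g-1)}\big(c_2-\mu c_1^2\big),\qquad
\mu=\frac12\Big(1-\frac{B_2}{B_1}-\frac{B_1}{[2]_g-1}\Big).
\]
Lemma~2.2 (valid for complex $\mu$) gives $|c_2-\mu c_1^2|\le 2\max\{1,|2\mu-1|\}$. Since $2\mu-1=-\big(\frac{B_2}{B_1}+\frac{B_1}{[2]_g-1}\big)$, this yields exactly
\[
|a_3|\le\frac{B_1}{[3]_g-1}\max\Big\{1,\Big|\frac{B_2}{B_1}+\frac{B_1}{[2]_g-1}\Big|\Big\}.
\]

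The main point requiring care is the $a_3$ computation. The algebra that collects the $c_1^2$ terms into a single $\mu$ must be done correctly so that $2\mu-1$ matches the expression in the statement. The factor $([2]_g-1)a_2^2$, which comes from the division by $f$, is the step most prone to error. Everything else is routine coefficient comparison plus Lemmas~2.1 and~2.2.
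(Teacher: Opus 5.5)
Your proposal is correct and follows the paper's proof essentially step for step. It uses the same Schwarz-function/Carath\'eodory substitution, the same expansion $zD_gf/f=1+([2]_g-1)a_2z+\bigl(([3]_g-1)a_3-([2]_g-1)a_2^2\bigr)z^2+\cdots$, and the same application of Lemmas 2.1 and 2.2 with $\mu=\frac12\bigl(1-\frac{B_2}{B_1}-\frac{B_1}{[2]_g-1}\bigr)$. Your caveats ($[2]_g\neq1$, and reading $[n]_g-1$ as $|[n]_g-1|$ when it is not a positive real) are sensible assumptions that the paper leaves unstated.
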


\begin{proof}
Assume that $f\in\mathcal{S}^{\ast }_{g}(\phi)$. Then, there exists a Schwarz function $u(z)$ analytic in $\mathbb{U}$ with $u(0) = 0$ and $|u(z)| \leq 1$ such that
\begin{align}\label{e2.2}
\frac{zD_{g}f(z)}{f(z)}=\phi(u(z)).
\end{align}
Let $p \in \mathcal{P}$ satisfy
\begin{equation*}
p(z) = \frac{1+u(z)}{1-u(z)} .
\end{equation*}
Through the straightforward computation, from (2.1) we obtain
\begin{align*}
u(z) = \frac{p(z) - 1}{p(z) + 1} &= \frac{c_1}{2}z + \left(\frac{c_2}{2} - \frac{c_1^2}{4}\right)z^2 + \left(\frac{c_3}{2} - \frac{c_2c_1}{2} + \frac{c_1^3}{8}\right)z^3 \\
&+ \left(\frac{c_4}{2} - \frac{c_3c_1}{2} + \frac{3c_2c_1^2}{8} - \frac{c_1^4}{16}\right)z^4 + \ldots, \quad (z \in U).
\end{align*}

Assume that $\phi$ is an analytic function in the open unit disk with positive real part, satisfying $\phi(0)=1, \phi'(0)>0$ and $\phi(\mathbb{D})$ is symmetric with respect to the real axis. Then, it has the following representation
\begin{align}
\phi(z)=1+B_{1}z+B_{2}z^{2}+B_{3}z^{3}+\cdot\cdot\cdot\ \ (B_{1}>0)\nonumber.
\end{align}
Hence
\begin{align}\nonumber\label{e2.3}
\phi(u(z))&=\phi\left(\frac{p(z) - 1}{p(z) + 1}\right)\\
&=1+\frac{c_{1}B_{1}}{2}z+\left[\left(\frac{c_{2}}{2}-\frac{c_{1}^{2}}{4}\right)B_{1}+\frac{c_{1}^{2}}{4}B_{2}\right]z^{2}+\cdot\cdot\cdot.
\end{align}
Since $f\in \mathbb{A} $, through the simple computation we have
\begin{align}\label{e2.4}
\frac{zD_{g}f(z)}{f(z)}=1+\left([2]_{g}-1\right)a_{2}z+\left(([3]_{g}-1)a_{3}-([2]_{g}-1)a_{2}^{2}\right)z^{2}+\cdot\cdot\cdot.
\end{align}
By combining (\ref{e2.2}), (\ref{e2.3}) and (\ref{e2.4}), we obtain
\begin{align}
\left([2]_{g}-1\right)a_{2} &= \frac{c_{1}B_{1}}{2}, \\
([3]_{g}-1)a_{3}-([2]_{g}-1)a_{2}^{2} &= \left(\frac{c_{2}}{2}-\frac{c_{1}^{2}}{4}\right)B_{1}+\frac{c_{1}^{2}}{4}B_{2}.
\end{align}
From (2.5) and (2.6), it follows that
\begin{align}
a_{2} &= \frac{c_1B_{1}}{2([2]_{g}-1)}, \\
a_{3} &= \frac{B_{1}}{2([3]_{g}-1)}\left[c_{2}-\frac{1}{2}\left(1-\frac{B_{2}}{B_{1}}-\frac{B_{1}}{[2]_{g}-1}\right)c_{1}^{2}\right].
\end{align}
Applying Lemma 2.1 and Lemma 2.2 to (2.7) and (2.8), respectively, we obtain
\begin{align}
\mid a_{2}\mid \leq \frac{B_{1}}{[2]_{g}-1}\nonumber
\end{align}
and
\begin{align}\nonumber
\mid a_{3}\mid \leq\frac{B_{1}}{[3]_{g}-1}\max\left\{1, \left|\frac{B_{2}}{B_{1}}+\frac{B_{1}}{[2]_{g}-1}\right|\right\}.\nonumber
\end{align}
Thus, Theorem 2.7 is proved.
\end{proof}
Next, by substituting different $g(z)$  from Remark 1.2 into Theorem 2.7, we obtain the following corollaries.

\begin{corollary}
Let $f$ be given by (1.1). If $f\in\mathcal{S}^{\ast }_{q}(\phi)$ , then
\begin{align}
\mid a_{2}\mid \leq \frac{B_{1}}{q}\nonumber,\ \
\mid a_{3}\mid \leq\frac{B_{1}}{q(1+q)}\max\left\{1, \left|\frac{B_{2}}{B_{1}}+\frac{B_{1}}{q}\right|\right\}.\nonumber
\end{align}
When $q\rightarrow1^{-}$,
\begin{align}
\mid a_{2}\mid \leq B_{1}\nonumber,\ \
\mid a_{3}\mid \leq\frac{B_{1}}{2}\max\left\{1, \left|\frac{B_{2}}{B_{1}}+B_{1}\right|\right\}.\nonumber
\end{align}
\end{corollary}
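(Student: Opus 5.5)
This is a direct specialization of Theorem 2.7, so the plan is to identify $[2]_g$ and $[3]_g$ for the $q$-case of Remark 1.2 and substitute. In case (2) we have $\gamma=1/2$, $\alpha=1$, $\beta=q$, so
\begin{align*}
[n]_{g}=[n]_{1,q,1/2}=\frac{1-q^{n}}{1-q}=[n]_q .
\end{align*}
In particular $[2]_g=1+q$ and $[3]_g=1+q+q^2$. Equivalently, one can expand $g(z)=z/((1-z)(1-qz))$ as a product of geometric series. The coefficient of $z^n$ is then $\sum_{k=0}^{n-1}q^k$, which confirms that $D_g$ acts on $f$ exactly as $D_q$ does, through $D_qf(z)=1+\sum_{n\ge 2}[n]_q a_n z^{n-1}$. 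Hence $\mathcal{S}^{\ast}_{q}(\phi)\subset\mathcal{S}^{\ast}_g(\phi)$ for this $g$, and Theorem 2.7 applies.

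Substituting gives $[2]_g-1=q$, and the bound $|a_2|\le B_1/q$ follows at once. Likewise $[3]_g-1=q+q^2=q(1+q)$, so the $a_3$ estimate becomes
\begin{align*}
|a_3|\le \frac{B_1}{q(1+q)}\max\left\{1,\left|\frac{B_2}{B_1}+\frac{B_1}{q}\right|\right\}.
\end{align*}
This is precisely the first pair of inequalities in the corollary.

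For the limiting case, observe that each bound is a continuous function of $q$ on $(0,1]$. We let $q\to 1^{-}$, which corresponds to case (3) of Remark 1.2, where $g(z)=z/(1-z)^2$ and $[n]_g=n$. This gives $|a_2|\le B_1$ and $|a_3|\le \frac{B_1}{2}\max\{1,|B_2/B_1+B_1|\}$.

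Alternatively, one can apply Theorem 2.7 directly with $[2]_g=2$ and $[3]_g=3$, which yields the same bounds without any limiting argument.

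There is no real obstacle. The only point needing care is the coefficient identification $[n]_g=[n]_q$: the formula $\frac{\alpha^n-\beta^n}{\alpha-\beta}$ in Remark 1.2 gives the coefficient of $z^n$, and it must be checked that $2(1-\gamma)=1$ when $\gamma=1/2$.
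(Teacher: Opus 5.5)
Your proposal is correct and matches the paper's argument: the paper also obtains this corollary by substituting $[2]_g-1=q$ and $[3]_g-1=q(1+q)$ into Theorem 2.7, where these values come from $g(z)=z/((1-z)(1-qz))$, and then letting $q\to1^-$. Your alternative reading of the limiting case, as a direct application of Theorem 2.7 with $[n]_g=n$, is a useful clarification: it gives the bound for the classical class itself, not just the limit of the $q$-bounds.
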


\begin{corollary}
Let $f$ be given by (1.1). If $f\in\mathcal{S}^{\ast }_{\alpha, \beta, \gamma}(\phi)$, then
\begin{align}
\mid a_{2}\mid \leq \frac{B_{1}}{[2]_{\alpha, \beta, \gamma}-1}\nonumber,\ \
\mid a_{3}\mid \leq\frac{B_{1}}{[3]_{\alpha, \beta, \gamma}-1}\max\left\{1, \left|\frac{B_{2}}{B_{1}}+\frac{B_{1}}{[2]_{\alpha, \beta, \gamma}-1}\right|\right\}.\nonumber
\end{align}
\end{corollary}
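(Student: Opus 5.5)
This corollary is a direct specialization of Theorem 2.7 to the generating function $g(z)=\frac{2(1-\gamma)z}{(1-\alpha z)(1-\beta z)}$ of Remark 1.2. The plan is to check that this $g$ lies in $\mathbb{A}$, or at least that the proof of Theorem 2.7 still applies to it. Then we identify the coefficients $[n]_g$ with $[n]_{\alpha,\beta,\gamma}$ and substitute them into the two bounds of Theorem 2.7.

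First we expand $g$. Partial fractions with $\alpha\neq\beta$ give
\begin{align*}
\frac{z}{(1-\alpha z)(1-\beta z)}=\sum_{n=1}^{\infty}\frac{\alpha^{n}-\beta^{n}}{\alpha-\beta}z^{n}.
\end{align*}
Hence the $n$-th coefficient of $g$ is $2(1-\gamma)\frac{\alpha^n-\beta^n}{\alpha-\beta}=[n]_{\alpha,\beta,\gamma}$, so $[2]_g=[2]_{\alpha,\beta,\gamma}$ and $[3]_g=[3]_{\alpha,\beta,\gamma}$.

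The only delicate point is the normalization. The first coefficient of $g$ is $2(1-\gamma)$, which equals $1$ only when $\gamma=1/2$. Looking at the proof of Theorem 2.7, the constant term of $zD_gf(z)/f(z)$ is the first Taylor coefficient of $g$. Since $\phi(u(0))=1$, the subordination forces this coefficient to equal $1$. So we either assume the normalization is intended (as the paper does in Definition 1.1 by requiring $g\in\mathbb{A}$), or we observe that the class is empty otherwise. Under this convention the coefficient identities (2.5) and (2.6) are unchanged with $[n]_g$ replaced by $[n]_{\alpha,\beta,\gamma}$.

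With these identifications, the computations (2.7) and (2.8) go through verbatim. Lemma 2.1 applied to $c_1$ gives the bound on $|a_2|$. Lemma 2.2 with
\begin{align*}
\mu=\frac12\left(1-\frac{B_2}{B_1}-\frac{B_1}{[2]_{\alpha,\beta,\gamma}-1}\right)
\end{align*}
gives $|2\mu-1|=\left|\frac{B_2}{B_1}+\frac{B_1}{[2]_{\alpha,\beta,\gamma}-1}\right|$, which yields the stated bound on $|a_3|$. We tacitly assume $[2]_{\alpha,\beta,\gamma}\neq1$ and $[3]_{\alpha,\beta,\gamma}\neq1$ so that these divisions are legitimate, exactly as in Theorem 2.7.

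The main obstacle is therefore not computational but definitional: making sure the normalization of $g$ is consistent with membership in $\mathbb{A}$. Once that is settled, the corollary is a direct substitution into Theorem 2.7.
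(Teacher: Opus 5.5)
Your proposal follows the paper's route: read off $[n]_g=[n]_{\alpha,\beta,\gamma}$ from the expansion of $g$ and substitute into Theorem 2.7 (equivalently, rerun (2.5)--(2.8) with Lemmas 2.1 and 2.2). Your normalization remark is also correct, and the paper silently skips it: $zD_gf(z)/f(z)$ has constant term $2(1-\gamma)$, so the class is empty unless $\gamma=1/2$ or the leading coefficient of $g$ is renormalized to $1$.

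One caveat, which you inherit from the paper rather than introduce: Lemma 2.1 actually gives $|a_2|\le B_1/|[2]_{\alpha,\beta,\gamma}-1|$, and likewise $|[3]_{\alpha,\beta,\gamma}-1|$ appears for $a_3$. So the bounds as written require $[2]_{\alpha,\beta,\gamma}>1$ and $[3]_{\alpha,\beta,\gamma}>1$, not merely $\neq 1$. For instance, with $\gamma=1/2$, $\alpha=1/2$, $\beta=0$ one has $[2]_{\alpha,\beta,\gamma}=1/2$, and $f(z)=z$ lies in the class yet violates $|a_2|\le B_1/([2]_{\alpha,\beta,\gamma}-1)<0$.
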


\begin{theorem}
Assume that $f$ is given by (\ref{e1.1}). If $f$ belongs to the function class $\mathcal{S}^{\ast }_{g}(\phi)$, then for a complex number $\mu$,
\begin{align}\nonumber
\mid a_3-\mu a_2^2\mid\leq \frac{B_{1}}{[3]_{g}-1}\max\left\{1,\left|\frac{B_{2}}{B_{1}}+
\frac{B_{1}}{[2]_{g}-1}\left(1-\frac{[3]_{g}-1}{[2]_{g}-1}\mu\right)\right|\right\}.
\end{align}
\end{theorem}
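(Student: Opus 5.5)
We reuse the coefficient relations obtained in the proof of Theorem 2.7 and reduce the functional $a_3-\mu a_2^2$ to the form $c_2-\nu c_1^2$, so that Lemma 2.2 applies directly. Since $f\in\mathcal{S}^{\ast}_{g}(\phi)$, we write $zD_gf(z)/f(z)=\phi(u(z))$ with $u=(p-1)/(p+1)$ and $p\in\mathcal{P}$. Comparing coefficients via (\ref{e2.3}) and (\ref{e2.4}) gives (2.7) and (2.8):
\begin{align*}
a_2=\frac{c_1B_1}{2([2]_g-1)},\qquad a_3=\frac{B_1}{2([3]_g-1)}\left[c_2-\frac12\left(1-\frac{B_2}{B_1}-\frac{B_1}{[2]_g-1}\right)c_1^2\right].
\end{align*}

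Next we form $a_3-\mu a_2^2$. Since $\mu a_2^2=\mu B_1^2c_1^2/(4([2]_g-1)^2)$, we factor out $B_1/(2([3]_g-1))$ to get
\begin{align*}
a_3-\mu a_2^2=\frac{B_1}{2([3]_g-1)}\left[c_2-\nu c_1^2\right],
\end{align*}
where
\begin{align*}
\nu=\frac12\left(1-\frac{B_2}{B_1}-\frac{B_1}{[2]_g-1}+\frac{([3]_g-1)B_1}{([2]_g-1)^2}\mu\right)=\frac12\left(1-\frac{B_2}{B_1}-\frac{B_1}{[2]_g-1}\left(1-\frac{[3]_g-1}{[2]_g-1}\mu\right)\right).
\end{align*}
The only real bookkeeping is checking that the $\mu$-term combines into exactly the factor $\left(1-\frac{[3]_g-1}{[2]_g-1}\mu\right)$ multiplying $B_1/([2]_g-1)$, as it does above.

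We then apply Lemma 2.2, which holds for complex $\nu$ since $\mu$ is complex:
\begin{align*}
|c_2-\nu c_1^2|\le 2\max\{1,|2\nu-1|\}.
\end{align*}
Here $2\nu-1=-\left(\frac{B_2}{B_1}+\frac{B_1}{[2]_g-1}\left(1-\frac{[3]_g-1}{[2]_g-1}\mu\right)\right)$. Multiplying by $B_1/(2([3]_g-1))$ then yields exactly the stated bound.

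The implicit assumptions are the same as in Theorem 2.7: $[2]_g\neq1$, $[3]_g\neq1$, and $[3]_g-1>0$, so that $|B_1/(2([3]_g-1))|=B_1/(2([3]_g-1))$. If $[3]_g$ were complex or smaller than $1$, the prefactor would have to be read in absolute value; we adopt the paper's convention. No step is a genuine obstacle; the only place needing care is the algebra leading to $\nu$. As a consistency check, taking $\mu=0$ recovers the $a_3$ bound of Theorem 2.7.
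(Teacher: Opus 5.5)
Your proposal is correct and follows the paper's proof essentially verbatim. Like the paper, you substitute (2.7)--(2.8) to write $a_3-\mu a_2^2=\frac{B_1}{2([3]_g-1)}(c_2-\nu c_1^2)$, where your $\nu$ coincides with the paper's $\kappa$, and then apply Lemma 2.2; your caveat about the implicit assumption $[3]_g-1>0$ (and $[2]_g\neq 1$) is a fair point that the paper leaves unstated.
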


\begin{proof}
From (2.7) and (2.8), we have
\begin{align}\nonumber
a_{2}^{2} &= \frac{c_{1}^{2}B_{1}^{2}}{4([2]_{g}-1)^{2}}, \\\nonumber
a_{3} &= \frac{B_{1}}{2([3]_{g}-1)}\left[c_{2}-\frac{1}{2}\left(1-\frac{B_{2}}{B_{1}}-\frac{B_{1}}{[2]_{g}-1}\right)c_{1}^{2}\right].
\end{align}
Thus
\begin{align}\nonumber\label{e2.9}
a_{3}-\mu a_{2}^{2}&=\frac{B_{1}}{2([3]_{g}-1)}\left[c_{2}-\frac{1}{2}\left(1-\frac{B_{2}}{B_{1}}-\frac{B_{1}}{[2]_{g}-1}\right)c_{1}^{2}\right]-\frac{\mu c_{1}^{2}B_{1}^{2}}{4([2]_{g}-1)^{2}}\\
&=\frac{B_{1}}{2([3]_{g}-1)}(c_{2}-\kappa c_{1}^{2}),
\end{align}
where
\begin{align}\nonumber
\kappa=\frac{1}{2}\left(1-\frac{B_{2}}{B_{1}}-
\frac{B_{1}}{[2]_{g}-1}\left(1-\frac{[3]_{g}-1}{[2]_{g}-1}\mu\right)\right).
\end{align}
Applying Lemma 2.2 to (2.9), we obtain
\begin{align}\nonumber
\mid a_3-\mu a_2^2\mid\leq \frac{B_{1}}{[3]_{g}-1}\max\left\{1,\left|\frac{B_{2}}{B_{1}}+
\frac{B_{1}}{[2]_{g}-1}\left(1-\frac{[3]_{g}-1}{[2]_{g}-1}\mu\right)\right|\right\}.
\end{align}
Thus, Theorem 2.10 is proved.
\end{proof}
Similarly, substituting different $g(z)$ from Remark 1.2 into Theorem 2.10 yields the following corollaries.

\begin{corollary}
Let $f$ be given by (1.1). If $f\in\mathcal{S}^{\ast }_{q}(\phi)$ , then for $\mu \in \mathbb{C}$,
\begin{align}\nonumber
\mid a_3-\mu a_2^2\mid\leq \frac{B_{1}}{q(1+q)}\max\left\{1,\left|\frac{B_{2}}{B_{1}}+
\frac{B_{1}}{q}\left(1-(1+q)\mu\right)\right|\right\}.
\end{align}
When $q\rightarrow1^{-}$,
\begin{align}\nonumber
\mid a_3-\mu a_2^2\mid\leq \frac{B_{1}}{2}\max\left\{1,\left|\frac{B_{2}}{B_{1}}+
B_{1}\left(1-2\mu\right)\right|\right\}.
\end{align}
\end{corollary}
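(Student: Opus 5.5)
This corollary follows by specializing Theorem 2.10 to the $q$-derivative case of Remark 1.2(2). The only work is computing $[2]_g-1$ and $[3]_g-1$ for $g(z)=\frac{z}{(1-z)(1-qz)}$, which corresponds to $\gamma=1/2$, $\alpha=1$, $\beta=q$. In that case $[n]_g=[n]_{1,q,1/2}=\frac{1-q^n}{1-q}=[n]_q$. Hence
\begin{align*}
[2]_g-1&=(1+q)-1=q,\\
[3]_g-1&=(1+q+q^2)-1=q(1+q),
\end{align*}
and therefore
\begin{align*}
\frac{[3]_g-1}{[2]_g-1}=1+q.
\end{align*}

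Substituting these into the bound of Theorem 2.10 gives the prefactor $\frac{B_1}{q(1+q)}$. The second entry of the maximum becomes
\begin{align*}
\left|\frac{B_2}{B_1}+\frac{B_1}{q}\bigl(1-(1+q)\mu\bigr)\right|,
\end{align*}
which is exactly the first claimed inequality, valid for every $\mu\in\mathbb{C}$ since Theorem 2.10 holds for complex $\mu$. We need $q>0$ so that $[2]_g-1\neq0$; this is guaranteed by $q\in(0,1)$.

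For the limiting statement we let $q\to1^{-}$ in the first inequality. One point to keep straight is that the limit is taken in the bound for a fixed $f$ that lies in the classes $\mathcal{S}^{\ast}_q(\phi)$ for $q$ near $1$. Alternatively, one applies Theorem 2.10 directly with $g(z)=z/(1-z)^2$ from Remark 1.2(3), where $[n]_g=n$, so that $[2]_g-1=1$ and $[3]_g-1=2$. Either route gives the prefactor $\frac{B_1}{2}$ and the expression
\begin{align*}
\left|\frac{B_2}{B_1}+B_1(1-2\mu)\right|.
\end{align*}

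The direct substitution with $g(z)=z/(1-z)^2$ is the cleaner argument, and it is the one I would present. The bound is continuous in $q$ on $(0,1]$, so the two routes agree. Nothing here is a genuine obstacle; the only care needed is the correct identification $[n]_g=[n]_q$ for the $q$-case.
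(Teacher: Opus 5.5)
Your proposal is correct and matches the paper's argument, which substitutes $[2]_q-1=q$ and $[3]_q-1=q(1+q)$ into Theorem 2.10. The $q\to1^-$ bound corresponds to the classical-derivative choice $g(z)=z/(1-z)^2$ of Remark 1.2(3), and your decision to justify it by direct substitution rather than by a literal limit over the classes $\mathcal{S}^{\ast}_{q}(\phi)$ usefully clarifies the paper's informal ``when $q\to1^{-}$''.
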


\begin{corollary}
Let $f$ be given by (1.1). If $f\in\mathcal{S}^{\ast }_{\alpha, \beta, \gamma}(\phi)$, then for $\mu \in \mathbb{C}$,
\begin{align}\nonumber
\mid a_3-\mu a_2^2\mid\leq \frac{B_{1}}{[3]_{\alpha, \beta, \gamma}-1}\max\left\{1,\left|\frac{B_{2}}{B_{1}}+
\frac{B_{1}}{[2]_{\alpha, \beta, \gamma}-1}\left(1-\frac{[3]_{\alpha, \beta, \gamma}-1}{[2]_{\alpha, \beta, \gamma}-1}\mu\right)\right|\right\}.
\end{align}
\end{corollary}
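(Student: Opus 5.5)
The statement to prove is the last corollary: the Fekete--Szeg\"{o} bound for $\mathcal{S}^{\ast }_{\alpha,\beta,\gamma}(\phi)$. The plan is to specialize Theorem 2.10, taking for $g$ the particular function
\begin{align*}
g(z)=\frac{2(1-\gamma)z}{(1-\alpha z)(1-\beta z)}
\end{align*}
from Remark 1.2. No new estimate is needed.

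The first step is to read off the coefficients of this $g$. Expanding $\frac{1}{(1-\alpha z)(1-\beta z)}$ as a Cauchy product of two geometric series, the coefficient of $z^{n}$ in $z/((1-\alpha z)(1-\beta z))$ is $\sum_{k=0}^{n-1}\alpha^{k}\beta^{n-1-k}=\frac{\alpha^{n}-\beta^{n}}{\alpha-\beta}$ for $\alpha\neq\beta$. Hence $[n]_{g}=[n]_{\alpha,\beta,\gamma}$ for $n=2,3$.

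Next we check that Theorem 2.10 applies. Its proof uses only three facts:
\begin{itemize}
\item the subordination $zD_gf/f=\phi(u)$;
\item the coefficient identities (2.5)--(2.6), which are formal consequences of the definition of $D_g$ for any $g$ with $g_n=[n]_g$;
\item division by $[2]_{g}-1$ and $[3]_{g}-1$.
\end{itemize}
So we only need $[2]_{\alpha,\beta,\gamma}\neq 1$ and $[3]_{\alpha,\beta,\gamma}\neq 1$. This is the implicit standing assumption, already made in the preceding corollary for $a_2,a_3$. If necessary, we also assume the positivity needed for the displayed expressions to be meaningful.

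One point needs care. Remark 1.2 writes $g(z)=z+\dots$ as an element of $\mathbb{A}$, but its leading coefficient is $2(1-\gamma)$, which equals $1$ only when $\gamma=1/2$. We follow the paper's convention: the first coefficient of $f$ convolved with $g$ is normalized to $1$, so that $zD_gf/f$ has constant term $1$. The higher coefficients are $[n]_{\alpha,\beta,\gamma}$ as defined in the remark.

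With this convention, formula (2.9) becomes
\begin{align*}
a_3-\mu a_2^2=\frac{B_1}{2([3]_{\alpha,\beta,\gamma}-1)}\left(c_2-\kappa c_1^2\right),
\end{align*}
where $\kappa$ is obtained from the $\kappa$ in the proof of Theorem 2.10 by replacing $[n]_g$ with $[n]_{\alpha,\beta,\gamma}$. Lemma 2.2 gives $|c_2-\kappa c_1^2|\le 2\max\{1,|2\kappa-1|\}$. Computing directly,
\begin{align*}
2\kappa-1=-\left(\frac{B_2}{B_1}+\frac{B_1}{[2]_{\alpha,\beta,\gamma}-1}\left(1-\frac{[3]_{\alpha,\beta,\gamma}-1}{[2]_{\alpha,\beta,\gamma}-1}\mu\right)\right),
\end{align*}
which yields exactly the stated bound.

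The main obstacle is not analytic; it is the normalization issue for $g$ when $\gamma\neq 1/2$ described above. Once that convention is fixed, the proof is a direct substitution into Theorem 2.10.
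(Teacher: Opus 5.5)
Your proposal is correct and follows the paper's route exactly: the paper obtains this corollary by substituting $g(z)=\frac{2(1-\gamma)z}{(1-\alpha z)(1-\beta z)}$, i.e.\ $[n]_g=[n]_{\alpha,\beta,\gamma}$, into Theorem 2.10, whose proof writes $a_3-\mu a_2^2=\frac{B_1}{2([3]_g-1)}(c_2-\kappa c_1^2)$ and then applies Lemma 2.2. Your extra remarks are legitimate points that the paper leaves implicit: the leading coefficient $2(1-\gamma)$ must be normalized to $1$ so that $zD_gf/f$ has constant term $1$, the quantities $[2]_{\alpha,\beta,\gamma}-1$ and $[3]_{\alpha,\beta,\gamma}-1$ must be nonzero, and $[3]_{\alpha,\beta,\gamma}-1>0$ is needed (otherwise an absolute value belongs in the denominator).
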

If we take $\mu \in \mathbb{R}$, then according to the proof of Theorem 2.10 and Lemma 2.3, the Fekete-Szeg\"{o} inequality for the function class $\mathcal{S}^{\ast }_{g}(\phi)$ can be estimated.
\begin{theorem}
Let $f$ be given by (1.1) with $\mu \in \mathbb{R}$. If $f \in \mathcal{S}^{\ast }_{g}(\phi)$, then
\begin{equation*}
\left|a_{3} - \mu a_{2}^{2}\right| \leq \frac{B_{1}}{[3]_{g}-1}
\begin{cases}
-2\kappa + 1, & \mu \leq \kappa_{1}, \\
1, &  \kappa_{1} \leq \mu \leq \kappa_{2}, \\
2\kappa - 1, &  \mu \geq \kappa_{2},
\end{cases}
\end{equation*}
where
\begin{align}\nonumber
\kappa_{1}=\frac{B_{1}^{2}([2]_{g}-1)+(B_{2}-B_{1})([2]_{g}-1)^{2}}{B_{1}^{2}([3]_{g}-1)},
\end{align}
\begin{align}\nonumber
\kappa_{2}=\frac{B_{1}^{2}([2]_{g}-1)+(B_{2}+B_{1})([2]_{g}-1)^{2}}{B_{1}^{2}([3]_{g}-1)}.
\end{align}
\end{theorem}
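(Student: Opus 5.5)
The plan is to reuse the representation (2.9) from the proof of Theorem 2.10:
\begin{align*}
a_3-\mu a_2^2=\frac{B_1}{2([3]_g-1)}\left(c_2-\kappa c_1^2\right),
\end{align*}
where
\begin{align*}
\kappa=\frac12\left(1-\frac{B_2}{B_1}-\frac{B_1}{[2]_g-1}\left(1-\frac{[3]_g-1}{[2]_g-1}\mu\right)\right).
\end{align*}
Since $\mu\in\mathbb{R}$, I will assume that $B_1,B_2$ are real (as for $\phi$ symmetric about the real axis) and that $[2]_g-1,[3]_g-1$ are positive reals, as is implicit in the statement. Then $\kappa$ is real, and Lemma 2.3 applies directly with $\kappa$ playing the role of its parameter. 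The lemma gives $|c_2-\kappa c_1^2|\le 2-4\kappa$ for $\kappa\le0$, $\le 2$ for $0\le\kappa\le1$, and $\le 4\kappa-2$ for $\kappa\ge1$. Multiplying by $B_1/(2([3]_g-1))$ yields the three bounds $\frac{B_1}{[3]_g-1}(1-2\kappa)$, $\frac{B_1}{[3]_g-1}$ and $\frac{B_1}{[3]_g-1}(2\kappa-1)$, which are exactly the three expressions in the statement.

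It remains to translate the conditions $\kappa\le0$, $0\le\kappa\le1$, $\kappa\ge1$ into conditions on $\mu$. The function $\kappa$ is affine in $\mu$ with slope
\begin{align*}
\frac{B_1([3]_g-1)}{2([2]_g-1)^2}>0,
\end{align*}
so it is increasing in $\mu$, and the three regions correspond to $\mu\le\kappa_1$, $\kappa_1\le\mu\le\kappa_2$ and $\mu\ge\kappa_2$. The thresholds come from solving $\kappa=0$ and $\kappa=1$. The equation $\kappa=0$ means $1-\frac{B_2}{B_1}=\frac{B_1}{[2]_g-1}\bigl(1-\frac{[3]_g-1}{[2]_g-1}\mu\bigr)$. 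Multiplying by $B_1([2]_g-1)^2$ and solving gives
\begin{align*}
\mu=\frac{B_1^2([2]_g-1)+(B_2-B_1)([2]_g-1)^2}{B_1^2([3]_g-1)}=\kappa_1.
\end{align*}
The equation $\kappa=1$ differs only in that $1-\frac{B_2}{B_1}$ is replaced by $-1-\frac{B_2}{B_1}$. This changes $B_2-B_1$ into $B_2+B_1$ and produces $\kappa_2$.

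The only real obstacle is this bookkeeping: checking the sign of the slope, so that the inequalities point the right way, and verifying that the algebra reproduces the stated $\kappa_1,\kappa_2$. If $[2]_g-1$ or $[3]_g-1$ were allowed to be negative or non-real, the monotonicity would fail. I would therefore state the positivity assumption explicitly; it holds in all the cases of Remark 1.2 considered, for instance $q$ and $q(1+q)$ in the $q$-case.
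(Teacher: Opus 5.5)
Your proposal is correct and follows the route the paper indicates for this theorem. You rewrite $a_3-\mu a_2^2$ via (2.9) as $\frac{B_1}{2([3]_g-1)}(c_2-\kappa c_1^2)$, apply Lemma 2.3 with the real parameter $\kappa$, and use that $\kappa$ is increasing in $\mu$ to turn the cases in $\kappa$ into the cases in $\mu$; your computation of $\kappa_1,\kappa_2$ checks out.

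One small correction to your side remark. Only $[3]_g-1>0$ (with $[2]_g-1$ real and nonzero) is needed, because $[2]_g-1$ enters the slope squared. This positivity is not automatic for all choices in Remark 1.2 (e.g.\ the $(p,q)$-case with $p^2+pq+q^2<1$), so it has to be imposed as a hypothesis rather than asserted to hold in all cases.
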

By inserting $g(z)$ subject to diverse scenarios outlined in Remark 1.2 into Theorem 2.13, we derive the subsequent corollaries.

\begin{corollary}
Let $f$ be given by (1.1). If $f\in\mathcal{S}^{\ast }_{q}(\phi)$ , then for $\mu \in \mathbb{R}$,
\begin{equation*}
\left|a_{3} - \mu a_{2}^{2}\right| \leq \frac{B_{1}}{2(q+q^{2})}
\begin{cases}
-4\kappa + 2, & \mu \leq \kappa_{1}, \\
2, &  \kappa_{1} \leq \mu \leq \kappa_{2}, \\
4\kappa - 2, &  \mu \geq \kappa_{2},
\end{cases}
\end{equation*}
where
\begin{align}\nonumber
\kappa_{1}=\frac{B_{1}^{2}+(B_{2}-B_{1})q}{B_{1}^{2}(1+q)},
\end{align}
\begin{align}\nonumber
\kappa_{2}=\frac{B_{1}^{2}+(B_{2}+B_{1})q}{B_{1}^{2}(1+q)}.
\end{align}
When$q\rightarrow1^{-}$,
\begin{equation*}
\left|a_{3} - \mu a_{2}^{2}\right| \leq \frac{B_{1}}{2}
\begin{cases}
-2\kappa + 1, & \mu \leq \kappa_{1}, \\
1, &  \kappa_{1} \leq \mu \leq \kappa_{2}, \\
2\kappa - 1, &  \mu \geq \kappa_{2},
\end{cases}
\end{equation*}
where
\begin{align}\nonumber
\kappa_{1}=\frac{B_{1}^{2}+B_{2}-B_{1}}{2B_{1}^{2}},\ \
\kappa_{2}=\frac{B_{1}^{2}+B_{2}+B_{1}}{2B_{1}^{2}}.
\end{align}
\end{corollary}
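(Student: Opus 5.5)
The plan is to specialize Theorem 2.13 directly. The only real work is computing the $g$-numbers for the choice of $g$ in Remark 1.2(2). There $\gamma=1/2$, $\alpha=1$, $\beta=q$, so
\begin{align*}
g(z)=\frac{z}{(1-z)(1-qz)}=\sum_{n=1}^{\infty}\frac{1-q^{n}}{1-q}z^{n},
\end{align*}
and hence $[n]_{g}=[n]_{1,q,1/2}=[n]_q$. In particular $[2]_g-1=q$ and $[3]_g-1=q+q^{2}=q(1+q)$. Both are positive for $q\in(0,1)$, so every division in the proofs of Theorems 2.7, 2.10 and 2.13 remains legitimate.

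Next I will substitute these values into the representation (2.9):
\begin{align*}
a_3-\mu a_2^2=\frac{B_1}{2q(1+q)}\left(c_2-\kappa c_1^2\right),\qquad \kappa=\frac{1}{2}\left(1-\frac{B_2}{B_1}-\frac{B_1}{q}\bigl(1-(1+q)\mu\bigr)\right).
\end{align*}
Since $\mu\in\mathbb{R}$ and $B_1>0$, $\kappa$ is real, so Lemma 2.3 applies to $|c_2-\kappa c_1^2|$ and produces the three cases $-4\kappa+2$, $2$, $4\kappa-2$. This is exactly why the prefactor appears as $B_1/(2(q+q^2))$ rather than $B_1/(q+q^2)$: it is the same bound as in Theorem 2.13 with the factor $2$ moved inside the brackets.

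The one point I will check carefully is the translation of the case thresholds $\kappa\le 0$ and $\kappa\ge 1$ into conditions on $\mu$. The coefficient of $\mu$ in $\kappa$ is $B_1(1+q)/(2q)>0$, so $\kappa$ is increasing in $\mu$. Solving $\kappa=0$ and $\kappa=1$ gives
\begin{align*}
\mu=\frac{B_1^2+(B_2-B_1)q}{B_1^2(1+q)}\quad\text{and}\quad \mu=\frac{B_1^2+(B_2+B_1)q}{B_1^2(1+q)},
\end{align*}
respectively. These agree with the general $\kappa_1,\kappa_2$ of Theorem 2.13 after cancelling a factor $q$ from $B_1^2q+(B_2\mp B_1)q^2$ against $B_1^2q(1+q)$. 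This confirms the ordering $\kappa_1\le\kappa_2$ and the assignment of the cases.

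Finally, for the limiting statement I will let $q\to1^{-}$. Every quantity involved ($\kappa$, $\kappa_1$, $\kappa_2$, and the prefactor) depends continuously on $q$ at $q=1$, giving $[2]_g-1\to1$ and $[3]_g-1\to2$. The prefactor becomes $B_1/4$ times the bracket $\{-4\kappa+2,\,2,\,4\kappa-2\}$, which is the same as $B_1/2$ times $\{-2\kappa+1,\,1,\,2\kappa-1\}$. The thresholds tend to $(B_1^2+B_2\mp B_1)/(2B_1^2)$. Alternatively, the limit case can be obtained by applying Theorem 2.13 directly with $g(z)=z/(1-z)^2$ from Remark 1.2(3), which is the cleaner route and avoids any interchange-of-limit issue. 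No step is a genuine obstacle; the threshold bookkeeping is the only place an error could creep in.
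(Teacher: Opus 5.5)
Your proposal is correct and follows the paper's route. The paper obtains this corollary by inserting $g(z)=z/((1-z)(1-qz))$ into Theorem 2.13, and you do the same, using $[2]_g-1=q$ and $[3]_g-1=q(1+q)$; your checks of the thresholds $\kappa_1,\kappa_2$, of the sign of the $\mu$-coefficient in $\kappa$, and of the factor $2$ in the prefactor are exactly the bookkeeping that substitution requires. Your handling of the limiting case by applying Theorem 2.13 with $g(z)=z/(1-z)^2$ from Remark 1.2(3) also matches how the paper defines $\mathcal{S}^{\ast}_{1^{-}}(\phi)$.
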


\begin{corollary}
Let $f$ be given by (1.1). If $f\in\mathcal{S}^{\ast }_{\alpha, \beta, \gamma}(\phi)$, then for $\mu \in \mathbb{R}$,
\begin{equation*}
\left|a_{3} - \mu a_{2}^{2}\right| \leq \frac{B_{1}}{2([3]_{\alpha, \beta, \gamma}-1)}
\begin{cases}
-4\kappa + 2, & \mu \leq \kappa_{1}, \\
2, &  \kappa_{1} \leq \mu \leq \kappa_{2}, \\
4\kappa - 2, &  \mu \geq \kappa_{2},
\end{cases}
\end{equation*}
where
\begin{align}\nonumber
\kappa_{1}=\frac{B_{1}^{2}([2]_{\alpha, \beta, \gamma}-1)+(B_{2}-B_{1})([2]_{\alpha, \beta, \gamma}-1)^{2}}{B_{1}^{2}([3]_{\alpha, \beta, \gamma}-1)},
\end{align}
\begin{align}\nonumber
\kappa_{2}=\frac{B_{1}^{2}([2]_{\alpha, \beta, \gamma}-1)+(B_{2}+B_{1})([2]_{\alpha, \beta, \gamma}-1)^{2}}{B_{1}^{2}([3]_{\alpha, \beta, \gamma}-1)}.
\end{align}
\end{corollary}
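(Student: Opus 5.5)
The statement to prove is the last corollary, the real-$\mu$ Fekete--Szeg\H{o} bound for $\mathcal{S}^{\ast}_{\alpha,\beta,\gamma}(\phi)$. The plan is to specialize Theorem 2.13 to $g(z)=\frac{2(1-\gamma)z}{(1-\alpha z)(1-\beta z)}$, for which $[n]_g=[n]_{\alpha,\beta,\gamma}$ by Remark 1.2. To keep the argument self-contained, I would rerun the proof of Theorem 2.10 with this $g$.

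From (2.7)--(2.9) we get
\begin{equation*}
a_3-\mu a_2^2=\frac{B_1}{2([3]_{\alpha,\beta,\gamma}-1)}\left(c_2-\kappa c_1^2\right),
\end{equation*}
where
\begin{equation*}
\kappa=\frac12\left(1-\frac{B_2}{B_1}-\frac{B_1}{[2]_{\alpha,\beta,\gamma}-1}\left(1-\frac{[3]_{\alpha,\beta,\gamma}-1}{[2]_{\alpha,\beta,\gamma}-1}\mu\right)\right).
\end{equation*}
Since $\mu$, $B_1$, $B_2$ and $[n]_{\alpha,\beta,\gamma}$ are real, $\kappa$ is real, so Lemma 2.3 applies. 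It gives $|c_2-\kappa c_1^2|\le -4\kappa+2$ for $\kappa\le 0$, $\le 2$ for $0\le\kappa\le1$, and $\le 4\kappa-2$ for $\kappa\ge1$. Multiplying by the prefactor $\frac{B_1}{2([3]_{\alpha,\beta,\gamma}-1)}$ yields exactly the displayed piecewise form with the entries $-4\kappa+2$, $2$, $4\kappa-2$.

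The remaining step is to convert the conditions on $\kappa$ into conditions on $\mu$. Write $A=[2]_{\alpha,\beta,\gamma}-1$ and $C=[3]_{\alpha,\beta,\gamma}-1$. Then
\begin{equation*}
\kappa=\frac12\left(1-\frac{B_2}{B_1}-\frac{B_1}{A}\right)+\frac{B_1C}{2A^2}\mu,
\end{equation*}
which is affine in $\mu$ with positive slope, assuming $C>0$ (implicit already in Theorem 2.7).

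Solving $\kappa=0$ gives $\mu=\frac{B_1^2A+(B_2-B_1)A^2}{B_1^2C}=\kappa_1$. Solving $\kappa=1$ gives $\mu=\frac{B_1^2A+(B_2+B_1)A^2}{B_1^2C}=\kappa_2$. Hence the ranges $\kappa\le0$, $0\le\kappa\le1$ and $\kappa\ge1$ correspond to $\mu\le\kappa_1$, $\kappa_1\le\mu\le\kappa_2$ and $\mu\ge\kappa_2$, respectively.

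The main obstacle is the sign and positivity bookkeeping. The ordering of the cases relies on $B_1>0$ and $C>0$, so I would state the assumption $[3]_{\alpha,\beta,\gamma}>1$ explicitly; $[2]_{\alpha,\beta,\gamma}\ne1$ is needed anyway for Theorem 2.7 to make sense. I would also check that the algebra for $\kappa_1,\kappa_2$ (clearing $2A^2/(B_1C)$) matches the formulas in Theorem 2.13. If that check holds, the corollary follows directly from Theorem 2.13 by substitution.
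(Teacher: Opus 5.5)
Your proposal is correct and follows the paper's own route. The corollary is Theorem 2.13 with $[n]_g=[n]_{\alpha,\beta,\gamma}$, and Theorem 2.13 comes from writing $a_3-\mu a_2^2=\frac{B_1}{2([3]_g-1)}(c_2-\kappa c_1^2)$ as in Theorem 2.10, applying Lemma 2.3 to the real parameter $\kappa$, and converting $\kappa=0,1$ into $\mu=\kappa_1,\kappa_2$; your formulas for $\kappa_1,\kappa_2$ check out. Your explicit hypothesis $[3]_{\alpha,\beta,\gamma}>1$ is left implicit in the paper but is genuinely needed for a positive prefactor and the stated case ordering, since it fails for admissible parameters such as $\gamma=1$.
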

Next, we proceed to estimate the Toeplitz determinant for the function class $\mathcal{S}^{\ast }_{g}(\phi)$ by starting with the estimation of $\mathcal{T}_2(2)$.

\begin{theorem}
Assume that $f$ is given by (1.1). If $f\in\mathcal{S}^{\ast }_{g}(\phi)$, then

(1) If $|\frac{B_{2}}{B_{1}}+\frac{B_{1}}{[2]_{g}-1}|\leq1$, then
\begin{align}\nonumber
\mid \mathcal{T}_2(2)\mid=\mid a_{2}^2-a_{3}^{2}\mid=
\frac{B_{1}^{2}}{([2]_{g}-1)^{2}}-\frac{B_{1}^{2}}{([3]_{g}-1)^{2}}.
\end{align}

(2) If $|\frac{B_{2}}{B_{1}}+\frac{B_{1}}{[2]_{g}-1}|>1$, then
\begin{align}\nonumber
\mid \mathcal{T}_2(2)\mid=\mid a_{2}^2-a_{3}^{2}\mid\leq
\frac{B_{1}^{2}}{([2]_{g}-1)^{2}}+\frac{B_{1}^{2}}{([3]_{g}-1)^{2}}
\times\left|\frac{B_{2}}{B_{1}}+\frac{B_{1}}{[2]_{g}-1}\right|^{2}.
\end{align}
\end{theorem}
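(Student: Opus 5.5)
The plan is to reuse the coefficient bounds of Theorem 2.7 and combine them through the elementary factorization $a_2^2-a_3^2=(a_2-a_3)(a_2+a_3)$. The cruder route is simply the triangle inequality $|a_2^2-a_3^2|\le |a_2|^2+|a_3|^2$. By Theorem 2.7,
\begin{align*}
|a_2|\le \frac{B_1}{[2]_g-1}, \qquad |a_3|\le \frac{B_1}{[3]_g-1}\max\left\{1,\left|\frac{B_2}{B_1}+\frac{B_1}{[2]_g-1}\right|\right\}.
\end{align*}
In case (2) the maximum equals $\left|\frac{B_2}{B_1}+\frac{B_1}{[2]_g-1}\right|$. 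Squaring both bounds and adding gives exactly the stated estimate
\begin{align*}
\frac{B_1^2}{([2]_g-1)^2}+\frac{B_1^2}{([3]_g-1)^2}\left|\frac{B_2}{B_1}+\frac{B_1}{[2]_g-1}\right|^2,
\end{align*}
so part (2) follows immediately from Theorem 2.7.

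For part (1) the maximum equals $1$, and the triangle inequality only yields $\frac{B_1^2}{([2]_g-1)^2}+\frac{B_1^2}{([3]_g-1)^2}$. The statement instead asserts the value with a minus sign, written as an equality. A plain triangle-inequality argument cannot produce that, so some extra structure is needed. I would go back to the representations (2.7) and (2.8) and write $a_2^2-a_3^2$ explicitly in terms of $c_1,c_2$:
\begin{align*}
a_2^2-a_3^2=\frac{B_1^2c_1^2}{4([2]_g-1)^2}-\frac{B_1^2}{4([3]_g-1)^2}\bigl(c_2-\kappa_0c_1^2\bigr)^2,
\end{align*}
where $\kappa_0=\frac12\left(1-\frac{B_2}{B_1}-\frac{B_1}{[2]_g-1}\right)$. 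Under the hypothesis of (1) we have $0\le\kappa_0\le1$ when the quantities are real, and Lemma 2.4 then gives $|c_2-\kappa_0c_1^2|\le2$.

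The intended reading of (1) is evidently the value attained at the extremal configuration. Take $p(z)=\frac{1+z}{1-z}$, so $c_1=c_2=2$. Then $a_2=\frac{B_1}{[2]_g-1}$ and $|a_3|=\frac{B_1}{[3]_g-1}$ in the regime where the max is $1$. In that case $|a_2^2-a_3^2|$ equals the displayed difference, which is nonnegative because $[3]_g\ge[2]_g$ in the standard cases. I would present (1) as the evaluation of $|a_2|^2-|a_3|^2$ at these extremal values of Theorem 2.7, i.e.\ the value taken by $\mathcal T_2(2)$ for the extremal function.

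The main obstacle is that part (1), as an identity valid for every $f$ in the class, is not justified by the available lemmas. A genuine upper bound in case (1) is only the sum $\frac{B_1^2}{([2]_g-1)^2}+\frac{B_1^2}{([3]_g-1)^2}$. I would therefore handle (1) as the extremal evaluation just described, and note that the rigorous uniform bound is the sum obtained from the triangle inequality.
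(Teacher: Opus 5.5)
Your argument for part (2) is correct: $|a_2^2-a_3^2|\le|a_2|^2+|a_3|^2$ together with Theorem 2.7 gives exactly the stated right-hand side. The paper gives no written proof of this theorem, but that sum of squared bounds is clearly the intended argument.

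Part (1) is a different matter. It is not merely unsupported by the lemmas; it is false, both as an identity and as an upper bound, so the right move is to refute it rather than reinterpret it. Take the classical case $g(z)=z/(1-z)^2$, where $[2]_g-1=1$ and $[3]_g-1=2$, and take $\phi(z)=1+z/2$. Then $B_1=\frac12$, $B_2=0$ and $\Lambda:=\frac{B_2}{B_1}+\frac{B_1}{[2]_g-1}=\frac12$, which is case (1). The function $f(z)=ze^{z/2}$ satisfies $zf'(z)/f(z)=\phi(z)$ and has $a_2=\frac12$, $a_3=\frac18$. Hence $|a_2^2-a_3^2|=\frac{15}{64}$, while (1) asserts the value $B_1^2-\frac{B_1^2}{4}=\frac{12}{64}$. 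Also, $f(z)=ze^{z^3/6}$ lies in the class and has $\mathcal{T}_2(2)=0$.

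Your fallback reading of (1), as the value of $\mathcal{T}_2(2)$ at an extremal function, rests on a miscalculation.
\begin{itemize}
\item With $c_1=c_2=2$, formula (2.8) gives $a_3=\frac{B_1\Lambda}{[3]_g-1}$, which does not have modulus $\frac{B_1}{[3]_g-1}$ unless $|\Lambda|=1$.
\item So at that function $a_2^2-a_3^2=\frac{B_1^2}{([2]_g-1)^2}-\frac{B_1^2\Lambda^2}{([3]_g-1)^2}$. This exceeds the displayed difference by $\frac{B_1^2(1-\Lambda^2)}{([3]_g-1)^2}>0$ when $|\Lambda|<1$; it is exactly the counterexample above.
\item Writing $2c_2=c_1^2+(4-|c_1|^2)\zeta$ with $|\zeta|\le1$ gives $|c_2-\kappa_0c_1^2|\le 2-\frac12(1-|\Lambda|)|c_1|^2$. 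So for $|\Lambda|<1$ the bound $|a_3|\le\frac{B_1}{[3]_g-1}$ is attained only with $c_1=0$ (e.g.\ by $p_2$ of Lemma 2.2), where $a_2=0$.
\item Consequently the two bounds of Theorem 2.7 are never realized by the same function in case (1), and the displayed difference corresponds to no extremal function.
\end{itemize}

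What you can legitimately write in case (1) is the sum bound $\frac{B_1^2}{([2]_g-1)^2}+\frac{B_1^2}{([3]_g-1)^2}$ that you already obtained. The refined inequality above, combined with convexity in $|c_1|^2$, even improves it to $\max\{\frac{B_1^2}{([3]_g-1)^2},\frac{B_1^2}{([2]_g-1)^2}+\frac{B_1^2\Lambda^2}{([3]_g-1)^2}\}$. Pair this with an explicit statement that part (1) of the theorem is incorrect.
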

By substituting different $g(z)$ from Remark 1.2 into Theorem 2.16, we acquire the following corollaries.
\begin{corollary}
If $f$ is given by (\ref{e1.1}) and $f\in\mathcal{S}^{\ast }_{q}(\phi)$ , then

(1) If $|\frac{B_{2}}{B_{1}}+\frac{B_{1}}{q}|\leq1$, then
\begin{align}\nonumber
\mid \mathcal{T}_2(2)\mid=\mid a_{2}^2-a_{3}^{2}\mid=
\frac{B_{1}^{2}(2+q)}{q(1+q)^{2}}.
\end{align}

(2) If $|\frac{B_{2}}{B_{1}}+\frac{B_{1}}{q}|>1$, then
\begin{align}\nonumber
\mid \mathcal{T}_2(2)\mid=\mid a_{2}^2-a_{3}^{2}\mid\leq
\frac{B_{1}^{2}}{q^{2}}\left(1+\frac{1}{(1+q)^{2}}
\times\left|\frac{B_{2}}{B_{1}}+\frac{B_{1}}{q}\right|^{2}\right).
\end{align}
Let $q\rightarrow1^{-}$,

(1) If $|\frac{B_{2}}{B_{1}}+B_{1}|\leq1$, then
\begin{align}\nonumber
\mid \mathcal{T}_2(2)\mid=\mid a_{2}^2-a_{3}^{2}\mid\leq
\frac{3B_{1}^{2}}{4}.
\end{align}

(2) If $|\frac{B_{2}}{B_{1}}+B_{1}|>1$, then
\begin{align}\nonumber
\mid \mathcal{T}_2(2)\mid=\mid a_{2}^2-a_{3}^{2}\mid\leq
B_{1}^{2}+\frac{B_{1}^{2}}{4}
\times\left|\frac{B_{2}}{B_{1}}+B_{1}\right|^{2}.
\end{align}
\end{corollary}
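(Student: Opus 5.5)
The statement is Corollary 2.17, so the plan is to specialize Theorem 2.16 to the $q$-case of Remark 1.2 and then let $q\to1^{-}$. The first step is to compute the $g$-numbers for $g(z)=\frac{z}{(1-qz)(1-z)}$, i.e. $\gamma=1/2$, $\alpha=1$, $\beta=q$. Here $[n]_g=\frac{1-q^{n}}{1-q}=[n]_q$, so $[2]_g=1+q$ and $[3]_g=1+q+q^{2}$. Consequently
\begin{align*}
[2]_g-1=q,\qquad [3]_g-1=q+q^{2}=q(1+q).
\end{align*}
These values agree with those already used in Corollary 2.8. Substituting them, the hypothesis $\left|\frac{B_2}{B_1}+\frac{B_1}{[2]_g-1}\right|$ of Theorem 2.16 becomes $\left|\frac{B_2}{B_1}+\frac{B_1}{q}\right|$, which matches the case split in the corollary.

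For case (1), insert these values into part (1) of Theorem 2.16, whose bound is $\frac{B_1^{2}}{q^{2}}-\frac{B_1^{2}}{q^{2}(1+q)^{2}}$. This simplifies to
\begin{align*}
\frac{B_1^{2}}{q^{2}}\cdot\frac{(1+q)^{2}-1}{(1+q)^{2}}=\frac{B_1^{2}\,q(2+q)}{q^{2}(1+q)^{2}}=\frac{B_1^{2}(2+q)}{q(1+q)^{2}},
\end{align*}
which is the stated expression. For case (2), factor $\frac{B_1^{2}}{q^{2}}$ out of the bound in part (2) of Theorem 2.16. This gives $\frac{B_1^{2}}{q^{2}}\bigl(1+\frac{1}{(1+q)^{2}}\bigl|\frac{B_2}{B_1}+\frac{B_1}{q}\bigr|^{2}\bigr)$, as stated.

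For the limit, let $q\to1^{-}$ using item (3) of Remark 1.2. Then $[2]_g-1\to1$ and $[3]_g-1\to2$, and the case condition becomes $\left|\frac{B_2}{B_1}+B_1\right|\le1$ versus $>1$. In case (1) the bound tends to $\frac{3B_1^{2}}{4}$. In case (2) it tends to $B_1^{2}+\frac{B_1^{2}}{4}\left|\frac{B_2}{B_1}+B_1\right|^{2}$.

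There is no conceptual obstacle; the main care point is the boundary case of the limit. The inequality $\left|\frac{B_2}{B_1}+B_1\right|\le1$ at $q=1$ does not force $\left|\frac{B_2}{B_1}+\frac{B_1}{q}\right|\le1$ for $q<1$, because $B_1/q>B_1$. To avoid this, I would not pass to the limit through the $q$-statement. Instead I would apply Theorem 2.16 directly to the limiting $g(z)=\frac{z}{(1-z)^2}$, for which $[n]_g=n$ exactly. That yields the $q\to1^{-}$ assertions with no limiting argument at all.
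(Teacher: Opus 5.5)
Your route is the paper's own: the paper obtains this corollary by substituting $[2]_g-1=q$ and $[3]_g-1=q(1+q)$ into Theorem 2.16, and your algebra is correct. That includes the simplification to $\frac{B_1^2(2+q)}{q(1+q)^2}$, the factored form in case (2), and the values $\frac{3B_1^2}{4}$ and $B_1^2+\frac{B_1^2}{4}\left|\frac{B_2}{B_1}+B_1\right|^2$ for $g(z)=z/(1-z)^2$. Applying Theorem 2.16 directly to $g(z)=z/(1-z)^2$, rather than passing to a limit, is also a sensible refinement.

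The gap is the premise. The paper gives no proof of Theorem 2.16(1), and it is false. So part (1) of the corollary, in both its $q$-form and its $q\to1^-$ form, cannot be proved by this or any argument; only part (2) survives. Your remark that there is no conceptual obstacle misses this. An exact value for $|\mathcal{T}_2(2)|$ that does not depend on $f$ should have been a warning sign.

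\textbf{The equality fails.} $f(z)=z$ lies in $\mathcal{S}^{\ast}_{q}(\phi)$ (with $u\equiv0$) and has $\mathcal{T}_2(2)=0$.

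\textbf{The bound fails too.} Take $u(z)=z$, i.e.\ $c_1=c_2=2$ in (2.7)--(2.8). This gives $a_2=\frac{B_1}{q}$ and $a_3=\frac{B_1}{q(1+q)}\left(\frac{B_2}{B_1}+\frac{B_1}{q}\right)$. So when $\frac{B_2}{B_1}+\frac{B_1}{q}=0$, which falls under case (1), you get $|\mathcal{T}_2(2)|=\frac{B_1^2}{q^2}$. This exceeds $\frac{B_1^2(2+q)}{q(1+q)^2}$ because $q(2+q)<(1+q)^2$. Two concrete instances:
\begin{itemize}
\item For $q=\frac{1}{2}$ and $\phi(z)=1+\frac{z}{4}-\frac{z^2}{8}$, the function $f(z)=z\prod_{k\ge0}\frac{q}{1-(1-q)\phi(q^kz)}$ is analytic in $\mathbb{U}$ and satisfies $zD_qf=f\phi$. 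It has $a_2=\frac{1}{2}$ and $a_3=0$, so $|\mathcal{T}_2(2)|=\frac{1}{4}>\frac{5}{36}$.
\item In the classical case, take $\phi(z)=1+\frac{z}{4}-\frac{z^2}{16}$ and $f(z)=z\exp\left(\frac{z}{4}-\frac{z^2}{32}\right)$. Then $zf'/f=\phi$ and $\frac{B_2}{B_1}+B_1=0$, while $a_2=\frac{1}{4}$ and $a_3=0$. Hence $|\mathcal{T}_2(2)|=\frac{1}{16}>\frac{3}{64}=\frac{3B_1^2}{4}$.
\end{itemize}

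\textbf{What you can actually prove.} Combine $|a_2^2-a_3^2|\le|a_2|^2+|a_3|^2$ with Corollary 2.8. This proves part (2) exactly as stated, together with its classical analogue. In case (1) it gives only $|\mathcal{T}_2(2)|\le\frac{B_1^2}{q^2}+\frac{B_1^2}{q^2(1+q)^2}$, which is $\frac{5B_1^2}{4}$ in the classical case. The minus sign and the equality in Theorem 2.16(1) are errors you inherited.
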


\begin{corollary}
If $f$ is given by (\ref{e1.1}) and $f\in\mathcal{S}^{\ast }_{\alpha, \beta, \gamma}(\phi)$ , then

(1) If $|\frac{B_{2}}{B_{1}}+\frac{B_{1}}{[2]_{\alpha, \beta, \gamma}-1}|\leq1$, then
\begin{align}\nonumber
\mid \mathcal{T}_2(2)\mid=\mid a_{2}^2-a_{3}^{2}\mid=
\frac{B_{1}^{2}}{([2]_{\alpha, \beta, \gamma}-1)^{2}}-\frac{B_{1}^{2}}{([3]_{\alpha, \beta, \gamma}-1)^{2}}.
\end{align}

(2) If $|\frac{B_{2}}{B_{1}}+\frac{B_{1}}{[2]_{\alpha, \beta, \gamma}-1}|>1$, then
\begin{align}\nonumber
\mid \mathcal{T}_2(2)\mid=\mid a_{2}^2-a_{3}^{2}\mid\leq
\frac{B_{1}^{2}}{([2]_{\alpha, \beta, \gamma}-1)^{2}}+\frac{B_{1}^{2}}{([3]_{\alpha, \beta, \gamma}-1)^{2}}
\times\left|\frac{B_{2}}{B_{1}}+\frac{B_{1}}{[2]_{\alpha, \beta, \gamma}-1}\right|^{2}.
\end{align}
\end{corollary}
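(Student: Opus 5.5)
The statement to be proved is the last corollary, i.e.\ the $\mathcal{T}_2(2)$ estimate for $\mathcal{S}^{\ast }_{\alpha, \beta, \gamma}(\phi)$. Since this class is exactly $\mathcal{S}^{\ast }_{g}(\phi)$ with $g(z)=\frac{2(1-\gamma)z}{(1-\alpha z)(1-\beta z)}$ from Remark 1.2, the plan is to specialize Theorem 2.16.

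\textbf{Step 1: identify the coefficients.} I would first expand $g$ as a power series to read off $[n]_g=g_n$. Using partial fractions,
\[
\frac{z}{(1-\alpha z)(1-\beta z)}=\sum_{n\ge1}\frac{\alpha^n-\beta^n}{\alpha-\beta}z^n .
\]
Hence $[n]_g=[n]_{\alpha,\beta,\gamma}$ for $n=2,3$, as recorded in Remark 1.2. One point needs checking: $g$ must lie in $\mathbb{A}$, i.e.\ $g_1=2(1-\gamma)=1$, which forces $\gamma=1/2$. The corollary implicitly adopts the same normalization convention as the paper does. Apart from this, the proof of Theorem 2.16 only uses the two coefficient values $[2]_g$ and $[3]_g$.

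\textbf{Step 2: recall the mechanism of Theorem 2.16.} Its proof, which I take as given, starts from (2.7) and (2.8), namely $a_2=\frac{c_1B_1}{2([2]_g-1)}$ and $a_3=\frac{B_1}{2([3]_g-1)}(c_2-\nu c_1^2)$. Lemma 2.1 bounds $|a_2|$, and Lemma 2.2 bounds $|a_3|$; the latter is Theorem 2.7. The triangle inequality then gives
\[
|a_2^2-a_3^2|\le |a_2|^2+|a_3|^2 .
\]
The case split is governed by whether the $\max$ in Theorem 2.7 equals $1$ or $\left|\frac{B_2}{B_1}+\frac{B_1}{[2]_g-1}\right|$. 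All of these steps go through verbatim with $[n]_g$ replaced by $[n]_{\alpha,\beta,\gamma}$. So the two cases of the corollary follow by direct substitution into cases (1) and (2) of Theorem 2.16.

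\textbf{Step 3: the obstacle.} The main obstacle is case (1), which is stated with a \emph{minus} sign and as an equality. The triangle inequality alone only gives a plus sign. For this case I would simply inherit exactly what Theorem 2.16(1) asserts. If an independent justification were needed, I would look at the extremal function $p(z)=\frac{1+z}{1-z}$, for which $c_1=c_2=2$. Under the hypothesis $\left|\frac{B_2}{B_1}+\frac{B_1}{[2]_{\alpha,\beta,\gamma}-1}\right|\le1$, I would check whether the $a_3^2$ contribution subtracts from $a_2^2$ with the stated moduli; this is the point I expect might not hold literally.

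Finally, I would verify that the denominators $[2]_{\alpha,\beta,\gamma}-1$ and $[3]_{\alpha,\beta,\gamma}-1$ are nonzero, which is needed for the substitution in Step 2 to make sense.
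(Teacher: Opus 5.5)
Your reduction is the same as the paper's: there the corollary is obtained only by substituting $[n]_{\alpha,\beta,\gamma}$ into Theorem 2.16, and Theorem 2.16 itself is stated without proof. Your mechanism, $|a_2^2-a_3^2|\le|a_2|^2+|a_3|^2$ combined with the bounds of Theorem 2.7, does give case (2), provided $[2]_{\alpha,\beta,\gamma}\neq1$ and $[3]_{\alpha,\beta,\gamma}\neq1$. This must be added as a hypothesis rather than verified, because it can fail in the allowed range: for example, $\gamma=\frac12$ and $\alpha+\beta=1$ give $[2]_{\alpha,\beta,\gamma}=1$.

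The gap is case (1). Appealing to Theorem 2.16(1) proves nothing, and your suspicion is correct: the assertion is false, both as an equality and as an upper bound. Write $[n]$ for $[n]_{\alpha,\beta,\gamma}$ and carry out the check you proposed, with $u(z)=z$, i.e.\ $c_1=c_2=2$. By (2.7)--(2.8),
\[
a_2=\frac{B_1}{[2]-1},\qquad a_3=\frac{B_1}{[3]-1}\left(\frac{B_2}{B_1}+\frac{B_1}{[2]-1}\right).
\]
Choose $\phi$ with $B_2=-B_1^2/([2]-1)$. Then the quantity in hypothesis (1) equals $0$, so case (1) applies. Yet $a_3=0$ and $|\mathcal{T}_2(2)|=\frac{B_1^2}{([2]-1)^2}$, which is strictly larger than the asserted $\frac{B_1^2}{([2]-1)^2}-\frac{B_1^2}{([3]-1)^2}$.

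This is realized concretely in the $q$-case $\alpha=1$, $\beta=q$, $\gamma=\frac12$. Take $\phi(z)=1+B_1z-\frac{B_1^2}{q}z^2$ with $0<B_1\le q/2$; this $\phi$ has positive real part and is univalent. Define $f$ by $zD_qf=f\phi$, i.e.\ $([n]_q-1)a_n=B_1a_{n-1}+B_2a_{n-2}$. The series converges in $\mathbb{U}$ because $[n]_q-1\ge q$, and it gives $a_2=B_1/q$, $a_3=0$.

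Separately, $u(z)=z^3$ forces $a_2=a_3=0$, so no equality can hold identically over the class. What your argument actually yields in case (1) is $|\mathcal{T}_2(2)|\le\frac{B_1^2}{([2]-1)^2}+\frac{B_1^2}{([3]-1)^2}$. The minus-sign statement as written is false, so that part cannot be proved.
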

Based on the proofs of Theorem 2.10 and Lemma 2.2, we consider the symmetric Toeplitz determinant $\mathcal{T}_3(1)$ of $\mathcal{S}^{\ast }_{g}(\phi)$ and obtain the following theorem.
\begin{theorem}
Suppose that $f$ is given by (\ref{e1.1}).  If $f\in\mathcal{S}^{\ast }_{g}(\phi)$, then
\begin{align}\nonumber
\mid \mathcal{T}_3(1)\mid &\leq \left(1+\frac{B_{1}}{[3]_{g}-1}\max\left\{1,\left|\frac{B_{2}}{B_{1}}+
\frac{B_{1}}{[2]_{g}-1}\left(1-2\frac{[3]_{g}-1}{[2]_{g}-1}\right)\right|\right\}\right)\\\nonumber
&\times\left(1+\frac{B_{1}}{[3]_{g}-1}\max\left\{1, \left|\frac{B_{2}}{B_{1}}+\frac{B_{1}}{[2]_{g}-1}\right|\right\}\right).
\end{align}
\end{theorem}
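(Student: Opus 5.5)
We will use the factorization recorded in the introduction,
\begin{align*}
\mathcal{T}_3(1)=(1+a_3-2a_2^2)(1-a_3),
\end{align*}
so that $|\mathcal{T}_3(1)|\leq \left(1+|a_3-2a_2^2|\right)\left(1+|a_3|\right)$ by the triangle inequality applied to each factor. The statement then reduces to bounding the two moduli $|a_3-2a_2^2|$ and $|a_3|$ separately, each by a result already proved for $\mathcal{S}^{\ast}_{g}(\phi)$.

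For the second factor we invoke Theorem 2.7 directly:
\begin{align*}
|a_3|\leq\frac{B_{1}}{[3]_{g}-1}\max\left\{1,\left|\frac{B_{2}}{B_{1}}+\frac{B_{1}}{[2]_{g}-1}\right|\right\}.
\end{align*}
This gives exactly the second bracket of the claim. For the first factor we specialize Theorem 2.10 to $\mu=2$. Recall that this theorem came from writing $a_3-\mu a_2^2=\frac{B_1}{2([3]_g-1)}(c_2-\kappa c_1^2)$ with $\kappa$ as in its proof and applying Lemma 2.2. Substituting $\mu=2$ yields
\begin{align*}
|a_3-2a_2^2|\leq\frac{B_{1}}{[3]_{g}-1}\max\left\{1,\left|\frac{B_{2}}{B_{1}}+\frac{B_{1}}{[2]_{g}-1}\left(1-2\frac{[3]_{g}-1}{[2]_{g}-1}\right)\right|\right\},
\end{align*}
which is the first bracket. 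Multiplying the two estimates completes the argument.

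The only point needing care is the algebraic factorization of the $3\times3$ determinant. Expanding gives $1-2a_2^2+2a_2^2a_3-a_3^2$, and $(1+a_3-2a_2^2)(1-a_3)$ expands to the same expression. The rest is a direct reuse of Theorems 2.7 and 2.10, so there is no real obstacle. Note that the bound is not claimed to be sharp: the extremal functions for the two factors need not coincide, so no sharpness argument is required.
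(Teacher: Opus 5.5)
Your proposal is correct and matches the paper's proof step for step. The paper also uses the factorization $\mathcal{T}_3(1)=(1+a_3-2a_2^2)(1-a_3)$ with the triangle inequality, bounds the first factor by Theorem 2.10 at $\mu=2$, and bounds $|a_3|$ by the Lemma 2.2 estimate (your Theorem 2.7); your explicit check of the determinant expansion is a small addition the paper omits.
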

\begin{proof}
By the definition of the symmetric Toeplitz determinant, we have
\begin{align}
\mid \mathcal{T}_3(1)\mid=|(1+a_{3}-2a_{2}^2)(1-a_{3})|\le (1+|a_{3}-2a_{2}^2|)(1+|a_{3}|).
\end{align}
Applying Theorem 2.10 and Lemma 2.2 to the inequality (2.10), we obtain
\begin{align}\nonumber
\mid \mathcal{T}_3(1)\mid &\leq \left(1+\frac{B_{1}}{[3]_{g}-1}\max\left\{1,\left|\frac{B_{2}}{B_{1}}+
\frac{B_{1}}{[2]_{g}-1}\left(1-2\frac{[3]_{g}-1}{[2]_{g}-1}\right)\right|\right\}\right)\\\nonumber
&\times\left(1+\frac{B_{1}}{[3]_{g}-1}\max\left\{1, \left|\frac{B_{2}}{B_{1}}+\frac{B_{1}}{[2]_{g}-1}\right|\right\}\right).
\end{align}
Thus, Theorem 2.19 holds.
\end{proof}
Similarly, by taking different $g(z)$ from Remark 1.2 in Theorem 2.19, the following corollaries can be derived.

\begin{corollary}
If $f$ is given by (\ref{e1.1}), and if $f\in\mathcal{S}^{\ast }_{q}(\phi)$ , then
\begin{align}\nonumber
\mid \mathcal{T}_3(1)\mid &\leq \left(1+\frac{B_{1}}{q(1+q)}\max\left\{1,\left|\frac{B_{2}}{B_{1}}-
\frac{B_{1}}{q}\left(1+2q\right)\right|\right\}\right)\\\nonumber
&\times\left(1+\frac{B_{1}}{q(1+q)}\max\left\{1, \left|\frac{B_{2}}{B_{1}}+\frac{B_{1}}{q}\right|\right\}\right).
\end{align}
When $q\rightarrow1^{-}$,
\begin{align}\nonumber
\mid \mathcal{T}_3(1)\mid \leq \left(1+\frac{B_{1}}{2}\max\left\{1,\left|\frac{B_{2}}{B_{1}}-3
B_{1}\right|\right\}\right)
\times\left(1+\frac{B_{1}}{2}\max\left\{1, \left|\frac{B_{2}}{B_{1}}+B_{1}\right|\right\}\right).
\end{align}
\end{corollary}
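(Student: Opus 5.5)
The statement is the $q$-specialization of Theorem 2.19, so the plan is to substitute the explicit $g$ of case (2) of Remark 1.2 into that theorem. No new estimate is needed.

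\textbf{Step 1: the $q$-coefficients.} With $g(z)=\frac{z}{(1-qz)(1-z)}$ we have
\begin{align*}
[n]_g=\frac{1-q^n}{1-q}=[n]_q,
\end{align*}
which is also the $(\alpha,\beta,\gamma)$ formula at $\gamma=1/2$, $\alpha=1$, $\beta=q$. Hence
\begin{align*}
[2]_g-1=q, \qquad [3]_g-1=q+q^2=q(1+q).
\end{align*}
Both quantities are positive for $q\in(0,1)$. Every denominator in Theorem 2.19 is therefore nonzero, and the theorem applies to $f\in\mathcal{S}^{\ast}_{q}(\phi)$.

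\textbf{Step 2: substitution.} Put these values into the two factors of Theorem 2.19.
\begin{itemize}
\item In the first factor the ratio $\frac{[3]_g-1}{[2]_g-1}$ equals $1+q$. The expression inside the modulus becomes
\begin{align*}
\frac{B_2}{B_1}+\frac{B_1}{q}\bigl(1-2(1+q)\bigr)=\frac{B_2}{B_1}-\frac{B_1}{q}(1+2q).
\end{align*}
\item The prefactor $\frac{B_1}{[3]_g-1}$ becomes $\frac{B_1}{q(1+q)}$ in both factors.
\item The second factor becomes
\begin{align*}
1+\frac{B_1}{q(1+q)}\max\Bigl\{1,\Bigl|\frac{B_2}{B_1}+\frac{B_1}{q}\Bigr|\Bigr\},
\end{align*}
which agrees with the $a_3$ bound of Corollary 2.8.
\end{itemize}
Together these give the first displayed inequality.

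\textbf{Step 3: the limit $q\to1^{-}$.} The right-hand side is a continuous function of $q$ on $(0,1]$ for fixed $B_1,B_2$, so we may let $q\to1^{-}$. Then $q(1+q)\to2$ and $(1+2q)/q\to3$, which gives $\frac{B_2}{B_1}-3B_1$ and $\frac{B_2}{B_1}+B_1$. This is the stated bound for the classical starlike class, consistent with case (3) of Remark 1.2.

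\textbf{Where care is needed.} The main risk is purely algebraic: the sign simplification $1-2(1+q)=-(1+2q)$ and keeping the two maxima in the same factors as in Theorem 2.19. There is also the logical point behind the limit. For $f$ in the limiting class, the justification is Theorem 2.19 applied directly with $g(z)=z/(1-z)^2$, where $[n]_g=n$. This yields the same expressions, so no interchange of limit and supremum is required.
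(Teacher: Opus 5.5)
Your proposal is correct and follows the paper's route: the paper also obtains this corollary by substituting case (2) of Remark 1.2, namely $[2]_g-1=q$ and $[3]_g-1=q(1+q)$, into Theorem 2.19 and then letting $q\to1^{-}$. Your added observation that the limiting bound also follows from applying Theorem 2.19 directly with $g(z)=z/(1-z)^2$ (so $[n]_g=n$) is a clean way to make the $q\to1^{-}$ statement rigorous.
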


\begin{corollary}
If $f$ is given by (\ref{e1.1}),  and if $f\in\mathcal{S}^{\ast }_{\alpha, \beta, \gamma}(\phi)$ , then
\begin{align}\nonumber
\mid \mathcal{T}_3(1)\mid &\leq \left(1+\frac{B_{1}}{[3]_{\alpha, \beta, \gamma}-1}\max\left\{1,\left|\frac{B_{2}}{B_{1}}+
\frac{B_{1}}{[2]_{\alpha, \beta, \gamma}-1}\left(1-2\frac{[3]_{\alpha, \beta, \gamma}-1}{[2]_{\alpha, \beta, \gamma}-1}\right)\right|\right\}\right)\\\nonumber
&\times\left(1+\frac{B_{1}}{[3]_{\alpha, \beta, \gamma}-1}\max\left\{1, \left|\frac{B_{2}}{B_{1}}+\frac{B_{1}}{[2]_{\alpha, \beta, \gamma}-1}\right|\right\}\right).
\end{align}
\end{corollary}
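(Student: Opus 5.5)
The statement to be proved is Corollary 2.21, i.e.\ Theorem 2.19 specialised to $g(z)=\frac{2(1-\gamma)z}{(1-\alpha z)(1-\beta z)}$. The plan is simply to read off the coefficients of this $g$ and substitute them into Theorem 2.19.

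By Remark 1.2, for $\alpha\neq\beta$ the Taylor coefficients of this $g$ are $g_n=[n]_{\alpha,\beta,\gamma}=2(1-\gamma)\frac{\alpha^n-\beta^n}{\alpha-\beta}$. By Definition 2.5, $\mathcal{S}^{\ast}_{\alpha,\beta,\gamma}(\phi)$ is exactly $\mathcal{S}^{\ast}_{g}(\phi)$ for this choice of $g$. Hence $[2]_g=[2]_{\alpha,\beta,\gamma}$ and $[3]_g=[3]_{\alpha,\beta,\gamma}$.

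The main steps are as follows.
\begin{itemize}
\item[(i)] Check that this $g$ is a legitimate choice in the framework of Theorem 2.19.
\item[(ii)] Replace $[2]_g$ and $[3]_g$ by $[2]_{\alpha,\beta,\gamma}$ and $[3]_{\alpha,\beta,\gamma}$ in both factors of the bound of Theorem 2.19.
\item[(iii)] Observe that the resulting expression is verbatim the claimed inequality.
\end{itemize}
No new estimate is needed. Tracing back, the first factor comes from Theorem 2.10 with $\mu=2$, which bounds $|a_3-2a_2^2|$. The second factor comes from the bound on $|a_3|$ in Theorem 2.7. Both bounds are already expressed purely through $[2]_g$ and $[3]_g$.

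The only real obstacle is the hidden hypothesis $[2]_g\neq1$ and $[3]_g\neq1$. These quantities appear as denominators in (2.7)–(2.8), from which all the bounds are derived. So I would state that the corollary holds for parameters with $[2]_{\alpha,\beta,\gamma}\neq1$ and $[3]_{\alpha,\beta,\gamma}\neq1$.

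A related point concerns the normalisation of $g$. Membership $g\in\mathbb{A}$ requires $g'(0)=2(1-\gamma)=1$, i.e.\ $\gamma=1/2$, but the paper's Remark 1.2 allows general $\gamma$. I would note that the derivation of (2.4) only uses the coefficients $g_2$ and $g_3$ in $zD_gf/f$, provided the leading coefficient is $1$. Under the paper's convention, $D_{\alpha,\beta,\gamma}f$ has constant term $1$ by definition, so the computation of (2.4) goes through unchanged and the specialisation is legitimate.
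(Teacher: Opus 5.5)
Your route is the same as the paper's: substitute $[2]_{\alpha,\beta,\gamma},[3]_{\alpha,\beta,\gamma}$ into Theorem 2.19. There is, however, a genuine gap. The hidden hypothesis you isolate is the wrong one, and the statement you propose to assert is false under it.

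\textbf{Why nonvanishing is not enough.} From (2.8)--(2.9) one only gets
$|a_3-\mu a_2^2|=\frac{B_1}{2\,|[3]_g-1|}\,|c_2-\kappa c_1^2|$.
Theorem 2.10, and hence both factors of Theorem 2.19, silently replaces $|[3]_g-1|$ by $[3]_g-1$. In the $(\alpha,\beta,\gamma)$ family we have $[3]_{\alpha,\beta,\gamma}-1=2(1-\gamma)(\alpha^2+\alpha\beta+\beta^2)-1$, which is negative for many admissible parameters. In that case $\frac{B_1}{[3]_{\alpha,\beta,\gamma}-1}\max\{\cdots\}$ is a negative number and cannot bound $|a_3|$ or $|a_3-2a_2^2|$. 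The quantity $[2]_g-1$ enters this statement only inside absolute values, so its sign is harmless here; it would matter for the $|a_2|$ bound of Theorem 2.7. The defect is already in Theorem 2.19, which the paper states with no sign condition, so citing it cannot repair this.

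\textbf{Counterexample.} Take $\gamma=\tfrac12$, $\alpha=\tfrac12$, $\beta=0$.
\begin{itemize}
\item Then $g(z)=\frac{z}{1-z/2}\in\mathbb{A}$, with $[2]=\tfrac12$ and $[3]=\tfrac14$, neither equal to $1$.
\item For this $g$ we have $\frac{zD_gf(z)}{f(z)}=\frac{2f(z/2)}{f(z)}$.
\item Take $\phi(z)=\frac{1+z}{1-z}$, so $B_1=B_2=2$.
\item The function $f(z)=z\prod_{k\ge0}\frac{1-i2^{-k}z}{1+i2^{-k}z}$ is analytic and normalized in $\mathbb{U}$.
\item It satisfies $\frac{2f(z/2)}{f(z)}=\frac{1+iz}{1-iz}=\phi(iz)\prec\phi(z)$, so $f$ belongs to the class.
\item Its coefficients are $a_2=-4i$ and $a_3=-8$, consistent with (2.7)--(2.8) for $c_1=2i$, $c_2=-2$.
\item Hence $\mathcal{T}_3(1)=(1+a_3-2a_2^2)(1-a_3)=25\cdot 9=225$.
\item The right-hand side of the corollary is $\bigl(1-\tfrac83\cdot 9\bigr)\bigl(1-\tfrac83\cdot 3\bigr)=(-23)(-7)=161<225$.
\end{itemize}

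\textbf{Correct version.} You need $[3]_{\alpha,\beta,\gamma}>1$ together with $[2]_{\alpha,\beta,\gamma}\neq 1$. Alternatively, replace $\frac{B_1}{[3]_{\alpha,\beta,\gamma}-1}$ by $\frac{B_1}{|[3]_{\alpha,\beta,\gamma}-1|}$ in both factors. With either fix, your substitution argument and the triangle-inequality step (2.10) go through. In the example above the modified bound equals $\bigl(1+\tfrac83\cdot 9\bigr)\bigl(1+\tfrac83\cdot 3\bigr)=225$, so it is attained there.
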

Similarly, based on the proof of Theorem 2.12 and Lemma 23, we consider the symmetric determinant $\mathcal{T}_3(1)$ of $\mathcal{S}^{\ast }_{g}(\phi)$ and obtain the following theorem.

\begin{theorem}
Assume that $f$ is given by (\ref{e1.1}). If $f$ belongs to the function class $\mathcal{S}^{\ast }_{g}(\phi)$, then
\begin{align}\nonumber
\mid \mathcal{T}_3(1)\mid &\leq \left(1+\frac{B_{1}}{[3]_{g}-1}\times\left(\frac{B_{1}}{[2]_{g}-1}\left(2\frac{[3]_{g}-1}{[2]_{g}-1}-1\right)-\frac{B_{2}}{B_{1}}\right)\right)\\\nonumber
&\times\left(1+\frac{B_{1}}{[3]_{g}-1}\max\left\{1, \left|\frac{B_{2}}{B_{1}}+\frac{B_{1}}{[2]_{g}-1}\right|\right\}\right).
\end{align}
\end{theorem}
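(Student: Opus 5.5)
We start exactly as in the proof of Theorem 2.19: from $\mathcal{T}_3(1)=(1+a_3-2a_2^2)(1-a_3)$ and the triangle inequality,
\begin{align*}
|\mathcal{T}_3(1)|\le \left(1+|a_3-2a_2^2|\right)\left(1+|a_3|\right).
\end{align*}
The second factor is handled as before. Theorem 2.7 (Lemma 2.2 applied to (2.8)) gives $|a_3|\le \frac{B_1}{[3]_g-1}\max\{1,|\frac{B_2}{B_1}+\frac{B_1}{[2]_g-1}|\}$, which is exactly the second factor in the claim.

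For the first factor we use the real-parameter representation (2.9) with $\mu=2$:
\begin{align*}
a_3-2a_2^2=\frac{B_1}{2([3]_g-1)}\left(c_2-\kappa c_1^2\right),\qquad
\kappa=\frac12\left(1-\frac{B_2}{B_1}-\frac{B_1}{[2]_g-1}\left(1-2\frac{[3]_g-1}{[2]_g-1}\right)\right).
\end{align*}
Since $B_1,B_2$ are real (by the real-axis symmetry of $\phi$) and $[n]_g$ is taken real as in Remark 1.2, $\kappa$ is real, so Lemma 2.3 applies in place of Lemma 2.2. The stated bound corresponds to the branch $\kappa\ge 1$, where $|c_2-\kappa c_1^2|\le 4\kappa-2$. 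Then
\begin{align*}
|a_3-2a_2^2|\le \frac{B_1}{2([3]_g-1)}(4\kappa-2)=\frac{B_1}{[3]_g-1}(2\kappa-1).
\end{align*}
A direct computation gives $2\kappa-1=\frac{B_1}{[2]_g-1}\left(2\frac{[3]_g-1}{[2]_g-1}-1\right)-\frac{B_2}{B_1}$. This is precisely the bracket in the first factor, and multiplying the two factor bounds finishes the proof.

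The main obstacle is justifying the choice of branch in Lemma 2.3, that is, $\kappa\ge1$, or equivalently $\mu=2\ge\kappa_2$ in the notation of Theorem 2.13. The statement carries no such hypothesis, so the argument must either show it holds under the standing assumptions or make it explicit. For instance, in the classical case $[2]_g=2$, $[3]_g=3$ it reads $3B_1-\frac{B_2}{B_1}\ge 3$, which is not automatic for every admissible $\phi$.

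I would first try to read it off from Theorem 2.13, which the paper cites just before this statement. If it cannot be derived, I would state it as the implicit condition $2\ge\kappa_2$ under which the theorem holds. In the complementary ranges the first factor becomes $1+\frac{B_1}{[3]_g-1}$ or $1+\frac{B_1}{[3]_g-1}(1-2\kappa)$, which recovers Theorem 2.19's form. No further estimates are needed beyond this case bookkeeping.
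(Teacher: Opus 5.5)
You follow the paper's proof step for step. The paper also writes $|\mathcal{T}_3(1)|\le(1+|a_3-2a_2^2|)(1+|a_3|)$ and takes $|a_3|$ from Theorem 2.7. It then represents $a_3-2a_2^2=\frac{B_1}{2([3]_g-1)}(c_2-\kappa c_1^2)$ via (2.9) with $\mu=2$, and uses the branch $4\kappa-2$ of Lemma 2.3 without ever checking $\kappa\ge1$. So the step you flagged is exactly the unjustified step in the paper.

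Your first plan, deriving $\kappa\ge1$ from Theorem 2.13 or the standing assumptions, cannot work, because without that hypothesis the statement is false.

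\begin{itemize}
\item $f(z)=z$ lies in every $\mathcal{S}^*_g(\phi)$, since $zD_gf(z)/f(z)\equiv1=\phi(0)$, and it has $\mathcal{T}_3(1)=1$. So the right-hand side would have to be at least $1$.
\item Take the classical case $g(z)=z/(1-z)^2$, so $[n]_g=n$, and $\phi(z)=1+\frac{z}{10}+\frac{z^2}{2}$. This $\phi$ meets every condition the paper imposes: $\Re\phi>\frac25$, $\phi(0)=1$, $\phi'(0)>0$, and real coefficients.
\item Then $B_1=\frac1{10}$, $B_2=\frac12$ and $\kappa=-\frac{37}{20}$. The right-hand side equals $\left(1-\frac{47}{200}\right)\left(1+\frac{51}{200}\right)\approx0.960<1$.
\item Even the intermediate claim $|a_3-2a_2^2|\le\frac{B_1}{[3]_g-1}(2\kappa-1)=-\frac{47}{200}$ is absurd here.
\item If you prefer a univalent $\phi$, take $\phi(z)=\frac{1+z}{1-z}$ and $g(z)=z+101z^2+5z^3$. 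The right-hand side is then about $0.49\cdot1.51\approx0.74<1$.
\end{itemize}

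Your fallback is therefore the correct resolution. The theorem holds under the added hypothesis $\kappa\ge1$, i.e.\ $2\ge\kappa_2$. You also tacitly need $[2]_g-1$ and $[3]_g-1$ real and positive, both to make $\kappa$ real and to translate $\kappa\ge1$ into $\mu\ge\kappa_2$.

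Be aware that this makes the result redundant. For real $\kappa$, the three branches of Lemma 2.3 combine to exactly $2\max\{1,|2\kappa-1|\}$, which is the bound of Lemma 2.2. So under $\kappa\ge1$ the estimate is identical to Theorem 2.19, and it never improves on it.

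One small slip: in the classical case $[2]_g=2$, $[3]_g=3$, the condition $\kappa\ge1$ reads $3B_1-\frac{B_2}{B_1}\ge1$, not $\ge3$. You can check this against $\kappa_2=\frac{B_1^2+B_2+B_1}{2B_1^2}$.
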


\begin{proof}
From the determinant $\mathcal{T}_3(1)$ and Theorem 2.12, we have
\begin{align}\nonumber
\mid a_{3}-2a_{2}^{2}\mid & \leq \frac{B_{1}}{2([3]_{g}-1)}(4\kappa - 2) \\\nonumber
&\leq\frac{B_{1}}{[3]_{g}-1}\times\left(\frac{B_{1}}{[2]_{g}-1}\left(2\frac{[3]_{g}-1}{[2]_{g}-1}-1\right)-\frac{B_{2}}{B_{1}}\right).
\end{align}
Therefore
\begin{align}\nonumber
\mid \mathcal{T}_3(1)\mid &\leq \left(1+\frac{B_{1}}{[3]_{g}-1}\times\left(\frac{B_{1}}{[2]_{g}-1}\left(2\frac{[3]_{g}-1}{[2]_{g}-1}-1\right)-\frac{B_{2}}{B_{1}}\right)\right)\\\nonumber
&\times\left(1+\frac{B_{1}}{[3]_{g}-1}\max\left\{1, \left|\frac{B_{2}}{B_{1}}+\frac{B_{1}}{[2]_{g}-1}\right|\right\}\right).
\end{align}
Thus, Theorem 2.22 holds.
\end{proof}
Similarly, by applying different $g(z)$ from Remark 1.2 to Theorem 2.22, we obtain the following corollaries.
\begin{corollary}
If $f$ is given by (\ref{e1.1}),  and if $f\in\mathcal{S}^{\ast }_{q}(\phi)$ , then
\begin{align}\nonumber
\mid \mathcal{T}_3(1)\mid \leq \left(1+\frac{B_{1}}{q+q^{2}}\times\left(\frac{B_{1}}{q}\left(1+2q\right)-\frac{B_{2}}{B_{1}}\right)\right)\nonumber
\times\left(1+\frac{B_{1}}{q+q^{2}}\max\left\{1, \left|\frac{B_{2}}{B_{1}}+\frac{B_{1}}{q}\right|\right\}\right).
\end{align}
When $q\rightarrow1^{-}$,
\begin{align}\nonumber
\mid \mathcal{T}_3(1)\mid &\leq \left(1+\frac{B_{1}}{2}\times\left(3B_{1}-\frac{B_{2}}{B_{1}}\right)\right)
\times\left(1+\frac{B_{1}}{2}\max\left\{1, \left|\frac{B_{2}}{B_{1}}+B_{1}\right|\right\}\right).
\end{align}
\end{corollary}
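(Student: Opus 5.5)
The statement is Corollary 2.23, the specialization of Theorem 2.22 to the $q$-derivative, together with its limit as $q\rightarrow1^{-}$. The plan is to substitute the $q$-case values of $[2]_g$ and $[3]_g$ into the bound of Theorem 2.22 and simplify.

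First, compute these values. In case (2) of Remark 1.2 we have $\gamma=1/2$, $\alpha=1$, $\beta=q$, so
\begin{align*}
[n]_{g}=\frac{1-q^{n}}{1-q}=[n]_q .
\end{align*}
This gives $[2]_g-1=q$ and $[3]_g-1=q+q^{2}=q(1+q)$. These are exactly the quantities already used in Corollaries 2.8 and 2.11.

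Next, substitute into the first factor of Theorem 2.22. The ratio is
\begin{align*}
\frac{[3]_g-1}{[2]_g-1}=1+q,
\end{align*}
so $2\frac{[3]_g-1}{[2]_g-1}-1=1+2q$. The expression
\begin{align*}
\frac{B_1}{[2]_g-1}\left(2\frac{[3]_g-1}{[2]_g-1}-1\right)-\frac{B_2}{B_1}
\end{align*}
therefore becomes $\frac{B_1}{q}(1+2q)-\frac{B_2}{B_1}$, and the prefactor $\frac{B_1}{[3]_g-1}$ becomes $\frac{B_1}{q+q^2}$. The second factor transforms the same way as in Corollary 2.8: $\frac{B_1}{q+q^2}\max\{1,|\frac{B_2}{B_1}+\frac{B_1}{q}|\}$. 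This yields the first displayed inequality.

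For the limiting case, let $q\rightarrow1^{-}$ in the bound just obtained. Every term depends continuously on $q$ near $1$, and the bound holds for each $q$ in the relevant range. Then $q+q^2\to2$, $\frac{B_1}{q}(1+2q)\to3B_1$ and $\frac{B_1}{q}\to B_1$, which gives the second display.

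The only point needing care is that Theorem 2.22 is used in its stated form. That theorem rests on the regime $\mu=2\geq\kappa_2$ of Theorem 2.13, so the corollary inherits the same implicit hypothesis; it is not a new obstacle introduced here. Beyond that, the argument is direct substitution, and I expect no genuine difficulty.
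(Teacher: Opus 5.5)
Your proposal is correct and is exactly the paper's argument. You substitute $[2]_g-1=q$ and $[3]_g-1=q+q^{2}$ (so $2\frac{[3]_g-1}{[2]_g-1}-1=1+2q$) into Theorem 2.22, and you rightly note that the result inherits that theorem's unstated requirement that $\mu=2$ lie in the regime $\mu\geq\kappa_2$ of Theorem 2.13, i.e.\ $\frac{B_1}{q}(1+2q)-\frac{B_2}{B_1}\geq 1$; for the $q\to1^{-}$ statement it is cleaner to apply Theorem 2.22 directly with $[n]_g=n$ (case (3) of Remark 1.2) than to pass to the limit in the bound, since the class $\mathcal{S}^{\ast}_{q}(\phi)$ itself changes with $q$, and this yields the same formula.
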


\begin{corollary}
If $f$ is given by (\ref{e1.1}), and if $f\in\mathcal{S}^{\ast }_{\alpha, \beta, \gamma}(\phi)$, then
\begin{align}\nonumber
\mid \mathcal{T}_3(1)\mid &\leq \left(1+\frac{B_{1}}{[3]_{\alpha, \beta, \gamma}-1}\times\left(\frac{B_{1}}{[2]_{\alpha, \beta, \gamma}-1}\left(2\frac{[3]_{\alpha, \beta, \gamma}-1}{[2]_{\alpha, \beta, \gamma}-1}-1\right)-\frac{B_{2}}{B_{1}}\right)\right)\\\nonumber
&\times\left(1+\frac{B_{1}}{[3]_{\alpha, \beta, \gamma}-1}\max\left\{1, \left|\frac{B_{2}}{B_{1}}+\frac{B_{1}}{[2]_{\alpha, \beta, \gamma}-1}\right|\right\}\right).
\end{align}
\end{corollary}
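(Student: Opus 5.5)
This corollary is the specialization of Theorem 2.22 to the kernel $g(z)=\frac{2(1-\gamma)z}{(1-\alpha z)(1-\beta z)}$ of Remark 1.2. The plan is to identify the $g$-numbers with the $(\alpha,\beta,\gamma)$-numbers and then substitute into the bound of Theorem 2.22. No new estimate on $\mathcal{P}$ is needed.

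\textbf{Step 1: expand the kernel.} For $\alpha\neq\beta$, partial fractions give
\begin{align*}
\frac{z}{(1-\alpha z)(1-\beta z)}=\sum_{n=1}^{\infty}\frac{\alpha^{n}-\beta^{n}}{\alpha-\beta}z^{n}.
\end{align*}
Hence the $n$-th Taylor coefficient of $g$ is $2(1-\gamma)\frac{\alpha^{n}-\beta^{n}}{\alpha-\beta}$. By the definition of $D_g$ this gives $[n]_{g}=g_{n}=[n]_{\alpha,\beta,\gamma}$ for $n=2,3$, exactly as recorded in Remark 1.2. In particular,
\begin{align*}
[2]_{\alpha,\beta,\gamma}=2(1-\gamma)(\alpha+\beta),\qquad [3]_{\alpha,\beta,\gamma}=2(1-\gamma)(\alpha^{2}+\alpha\beta+\beta^{2}).
\end{align*}

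\textbf{Step 2: check that $D_g$ is the right operator.} The first coefficient of $g$ is $2(1-\gamma)$. So $g\in\mathbb{A}$ strictly requires $\gamma=1/2$; otherwise the leading term of $D_g f$ is $2(1-\gamma)$ rather than $1$. However, the paper's Definition 1.1 writes $D_gf(z)=\frac1z\{z+\sum_{n\ge 2}[n]_g a_n z^n\}$, which keeps the normalized leading term $1$. I will adopt that convention, as Remark 1.2 does. With it, (2.4) and all the coefficient relations (2.5)--(2.8) hold verbatim with $[n]_g$ replaced by $[n]_{\alpha,\beta,\gamma}$. I will also assume, as Theorem 2.7 implicitly does, that $[2]_{\alpha,\beta,\gamma}\neq 1$ and $[3]_{\alpha,\beta,\gamma}\neq 1$, so the denominators are nonzero.

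\textbf{Step 3: substitute.} Since $f\in\mathcal{S}^{\ast}_{\alpha,\beta,\gamma}(\phi)$ is by definition $f\in\mathcal{S}^{\ast}_{g}(\phi)$ for this $g$, Theorem 2.22 applies directly. Replacing every $[n]_g$ by $[n]_{\alpha,\beta,\gamma}$ in its conclusion yields the stated bound.

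\textbf{Main obstacle: the regime hypothesis.} The only delicate point is that Theorem 2.22 rests on the third branch of Theorem 2.13 at $\mu=2$, which needs $2\ge\kappa_2$. With the $(\alpha,\beta,\gamma)$-numbers inserted, this reads
\begin{align*}
2\ge \frac{B_{1}^{2}([2]_{\alpha,\beta,\gamma}-1)+(B_{2}+B_{1})([2]_{\alpha,\beta,\gamma}-1)^{2}}{B_{1}^{2}([3]_{\alpha,\beta,\gamma}-1)}.
\end{align*}
I would carry this over as the standing assumption, exactly as in Theorem 2.22, and verify that it is the same condition after substitution. Otherwise the corollary is a direct rewriting.
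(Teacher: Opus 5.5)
Your route is the paper's own: the corollary is obtained there simply by writing $[n]_g=[n]_{\alpha,\beta,\gamma}$ in Theorem 2.22, and your Steps 1--3 do exactly that. The point to correct is how you treat the regime condition.

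Theorem 2.22 is stated with no hypothesis at all. Its proof uses the branch $|c_2-\kappa c_1^2|\le 4\kappa-2$ of Lemma 2.3 (the case $\mu\ge\kappa_2$ of Theorem 2.13) at $\mu=2$ without checking $\kappa\ge 1$. So there is nothing to ``carry over exactly as in Theorem 2.22''. The condition must be added as a new hypothesis, and it cannot be dispensed with, because the corollary as stated is false.

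For a counterexample, take $\gamma=\frac12$, so $g_1=1$ and your normalization worry in Step 2 does not arise. Let $\alpha,\beta$ be the roots of $t^2-\frac{11}{10}t+\frac15=0$, i.e. $\alpha\approx0.870$, $\beta\approx0.230$. Then $[2]_{\alpha,\beta,\gamma}=\frac{11}{10}$ and $[3]_{\alpha,\beta,\gamma}=\frac{101}{100}$. Take $\phi(z)=\frac{1+z}{1-z}$, so $B_1=B_2=2$.
The first factor on the right-hand side is $1+200(-16-1)=-3399$, and the second is $1+200\cdot 21=4201$. So the bound is negative. Yet $f(z)=z$ lies in $\mathcal{S}^{\ast}_{\alpha,\beta,\gamma}(\phi)$, since $zD_gf/f\equiv 1$, and has $\mathcal{T}_3(1)=1$. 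Here $\kappa=-8$ at $\mu=2$ and $\kappa_2=11>2$, which is precisely the failure you anticipated.

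What your argument does establish is the corollary under the additional hypotheses $[3]_{\alpha,\beta,\gamma}>1$ and
\[
\frac{B_1}{[2]_{\alpha,\beta,\gamma}-1}\Bigl(2\,\frac{[3]_{\alpha,\beta,\gamma}-1}{[2]_{\alpha,\beta,\gamma}-1}-1\Bigr)-\frac{B_2}{B_1}\ \ge\ 1,
\]
which is $\kappa\ge 1$ at $\mu=2$.

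You need $[3]_{\alpha,\beta,\gamma}-1>0$, not merely $\neq 0$, for two reasons. Otherwise both factors should carry $|[3]_{\alpha,\beta,\gamma}-1|$ in the denominator. Also, the equivalence of the displayed condition with your $2\ge\kappa_2$ uses this sign.

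Under these hypotheses the proof goes through:
\begin{itemize}
\item Lemma 2.2 gives $|a_3-2a_2^2|\le\frac{B_1}{[3]_{\alpha,\beta,\gamma}-1}(2\kappa-1)$, because $\max\{1,|2\kappa-1|\}=2\kappa-1$.
\item Theorem 2.7 (Corollary 2.9) bounds $|a_3|$.
\item The triangle inequality in (2.10) finishes.
\end{itemize}
State the condition as an explicit hypothesis of the corollary rather than attributing it to Theorem 2.22.
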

Next, we estimate the second-kind Hankel determinant $\mathcal{H}_2(1)$ for the function class $\mathcal{S}^{\ast }_{g}(\phi)$, leading to the following theorem and corollaries. As a direct consequence of Theorem 2.10, we derive the following theorem.
\begin{theorem}
Assume that $f$ is given by (\ref{e1.1}). If $f$ belongs to the function class $\mathcal{S}^{\ast }_{g}(\phi)$, then
\begin{align}\nonumber
\mid \mathcal{H}_2(1)\mid &\leq \frac{B_{1}}{[3]_{g}-1}\max\left\{1,\left|\frac{B_{2}}{B_{1}}+
\frac{B_{1}}{[2]_{g}-1}\left(1-\frac{[3]_{g}-1}{[2]_{g}-1}\right)\right|\right\}.
\end{align}

\end{theorem}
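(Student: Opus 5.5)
The plan is to read off the bound directly from Theorem 2.10 with the particular choice $\mu=1$. By the remark preceding the statement, for $f\in\mathbb{A}$ we have $\mathcal{H}_2(1)=a_3-a_2^2$. So the only work is to confirm that setting $\mu=1$ in Theorem 2.10 reproduces the displayed expression.

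First, I would write $\mathcal{H}_2(1)=a_3-\mu a_2^2$ with $\mu=1$. Then I recall from the proof of Theorem 2.10, using (2.7) and (2.8), that
\begin{align*}
a_3-a_2^2=\frac{B_1}{2([3]_g-1)}\left(c_2-\kappa c_1^2\right),
\end{align*}
where
\begin{align*}
\kappa=\frac{1}{2}\left(1-\frac{B_2}{B_1}-\frac{B_1}{[2]_g-1}\left(1-\frac{[3]_g-1}{[2]_g-1}\right)\right).
\end{align*}
This $\kappa$ is the general one specialized at $\mu=1$. Here $c_1,c_2$ are the coefficients of the Carath\'eodory function $p=(1+u)/(1-u)$ built from the Schwarz function $u$ in (2.2).

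Next, I apply Lemma 2.2 with complex parameter $\kappa$, which gives $|c_2-\kappa c_1^2|\le 2\max\{1,|2\kappa-1|\}$. Since
\begin{align*}
2\kappa-1=-\left(\frac{B_2}{B_1}+\frac{B_1}{[2]_g-1}\left(1-\frac{[3]_g-1}{[2]_g-1}\right)\right),
\end{align*}
multiplying by $\frac{B_1}{2([3]_g-1)}$ yields exactly the asserted inequality for $|\mathcal{H}_2(1)|$.

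There is essentially no obstacle. The one point needing care is the sign bookkeeping in $2\kappa-1$. I would check it against the general formula in Theorem 2.10, verifying that the coefficient of $\mu$ there is $-\frac{B_1([3]_g-1)}{([2]_g-1)^2}$. I would also note the tacit assumption, as in Theorem 2.7, that $[2]_g\neq1$ and $[3]_g>1$, so that the denominators make sense and the prefactor is positive.
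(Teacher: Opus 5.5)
Your proposal is correct and follows the paper's route: the paper also obtains this bound directly from Theorem 2.10 with $\mu=1$, using $\mathcal{H}_2(1)=a_3-a_2^2$. Your sign check on $2\kappa-1$ and your note on the tacit nondegeneracy assumptions on $[2]_g$ and $[3]_g$ are both accurate.
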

Similarly, by applying different $g(z)$ from Remark 1.2 to Theorem 2.25, we acquire the following corollaries.
\begin{corollary}
If $f$ is defined by (\ref{e1.1}), and if $f\in\mathcal{S}^{\ast }_{q}(\phi)$ , then
\begin{align}\nonumber
\mid \mathcal{H}_2(1)\mid &\leq \frac{B_{1}}{q(1+q)}\max\left\{1,\left|\frac{B_{2}}{B_{1}}-B_{1}\right|\right\}.
\end{align}
When $q\rightarrow1^{-}$,
\begin{align}\nonumber
\mid \mathcal{H}_2(1)\mid &\leq \frac{B_{1}}{2}\max\left\{1,\left|\frac{B_{2}}{B_{1}}-B_{1}\right|\right\}.
\end{align}
\end{corollary}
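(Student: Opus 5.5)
The statement just given is Corollary 2.26, the specialization of Theorem 2.25 to $g(z)=\frac{z}{(1-qz)(1-z)}$. My plan is to compute $[2]_g$ and $[3]_g$ for this $g$ and substitute them into Theorem 2.25. Theorem 2.25 is itself Theorem 2.10 with $\mu=1$, because $\mathcal{H}_2(1)=a_3-a_2^2$.

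\textbf{Step 1: the coefficients of $g$.} By Remark 1.2(2), with $\alpha=1$, $\beta=q$, $\gamma=1/2$, we have $[n]_g=\frac{1-q^n}{1-q}=[n]_q$. Hence
\begin{align*}
[2]_g-1=q,\qquad [3]_g-1=q+q^2=q(1+q).
\end{align*}
These are the same values already used in Corollaries 2.8 and 2.11.

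\textbf{Step 2: substitute into Theorem 2.25.} The prefactor becomes $\frac{B_1}{[3]_g-1}=\frac{B_1}{q(1+q)}$. Inside the maximum,
\begin{align*}
\frac{B_1}{[2]_g-1}\left(1-\frac{[3]_g-1}{[2]_g-1}\right)=\frac{B_1}{q}\bigl(1-(1+q)\bigr)=-B_1,
\end{align*}
so the bound is $\frac{B_1}{q(1+q)}\max\{1,|B_2/B_1-B_1|\}$. This is exactly the first claim, and it agrees with Corollary 2.11 at $\mu=1$.

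\textbf{Step 3: the limit $q\to1^-$.} This case is Remark 1.2(3), the classical derivative. The maximum term does not depend on $q$, and $q(1+q)\to2$. The bound therefore tends to $\frac{B_1}{2}\max\{1,|B_2/B_1-B_1|\}$. One can also see this directly: for $g(z)=z/(1-z)^2$ we have $[n]_g=n$, so $[2]_g-1=1$ and $[3]_g-1=2$, and Theorem 2.25 applies to that $g$ without any limiting argument.

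No step poses a real obstacle. The only point needing care is that the factor in front of $\mu$ in Theorem 2.10 is $\frac{[3]_g-1}{[2]_g-1}=1+q$, which is what makes the $q$-dependence cancel inside the maximum.
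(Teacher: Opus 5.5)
Your proposal is correct and follows the paper's route: the paper gets this corollary by substituting $g(z)=\frac{z}{(1-qz)(1-z)}$, so that $[2]_g-1=q$ and $[3]_g-1=q(1+q)$, into Theorem 2.25, which is Theorem 2.10 at $\mu=1$. Your check that the inner term collapses to $-B_1$ is right, and reading the $q\to1^-$ case as a direct application to $g(z)=z/(1-z)^2$ with $[n]_g=n$ is a sound way to make that limiting statement precise.
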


\begin{corollary}
If $f$ is defined by (\ref{e1.1}), and if $f\in\mathcal{S}^{\ast }_{\alpha, \beta, \gamma}(\phi)$ , then
\begin{align}\nonumber
\mid \mathcal{H}_2(1)\mid &\leq \frac{B_{1}}{[3]_{\alpha, \beta, \gamma}-1}\max\left\{1,\left|\frac{B_{2}}{B_{1}}+
\frac{B_{1}}{[2]_{\alpha, \beta, \gamma}-1}\left(1-\frac{[3]_{\alpha, \beta, \gamma}-1}{[2]_{\alpha, \beta, \gamma}-1}\right)\right|\right\}.
\end{align}
\end{corollary}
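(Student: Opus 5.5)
The statement immediately above is the last corollary, for $\mathcal{S}^{\ast }_{\alpha, \beta, \gamma}(\phi)$, on $\mathcal{H}_2(1)$. The plan is to obtain it by specializing Theorem 2.25. There $g$ is arbitrary in $\mathbb{A}$, so we first check that $g(z)=\frac{2(1-\gamma)z}{(1-\alpha z)(1-\beta z)}$ from Remark 1.2 is admissible. With this choice the $g$-derivative $D_g$ becomes $D_{\alpha,\beta,\gamma}$, and $[n]_g=[n]_{\alpha,\beta,\gamma}=2(1-\gamma)\frac{\alpha^n-\beta^n}{\alpha-\beta}$. The class $\mathcal{S}^{\ast }_{g}(\phi)$ is then by definition exactly $\mathcal{S}^{\ast }_{\alpha, \beta, \gamma}(\phi)$ (Definition 2.5). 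So the bound follows by replacing $[2]_g$ and $[3]_g$ with $[2]_{\alpha,\beta,\gamma}$ and $[3]_{\alpha,\beta,\gamma}$ in Theorem 2.25.

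For completeness I would also recall why Theorem 2.25 holds. Since $\mathcal{H}_2(1)=a_3-a_2^2$, it is the case $\mu=1$ of Theorem 2.10. In the proof of Theorem 2.10, (2.9) gives
$$a_3-a_2^2=\frac{B_1}{2([3]_g-1)}(c_2-\kappa c_1^2), \qquad \kappa=\frac12\left(1-\frac{B_2}{B_1}-\frac{B_1}{[2]_g-1}\left(1-\frac{[3]_g-1}{[2]_g-1}\right)\right).$$
Lemma 2.2 then bounds $|c_2-\kappa c_1^2|$ by $2\max\{1,|2\kappa-1|\}$, and
$$|2\kappa-1|=\left|\frac{B_2}{B_1}+\frac{B_1}{[2]_g-1}\left(1-\frac{[3]_g-1}{[2]_g-1}\right)\right|,$$
which is precisely the stated bound.

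The only point needing care is that the coefficient identities (2.7)--(2.8) divide by $[2]_{g}-1$ and $[3]_{g}-1$. This requires $[2]_{\alpha,\beta,\gamma}\neq 1$ and $[3]_{\alpha,\beta,\gamma}\neq1$, with these quantities positive so that the bound has the intended form. I would state this as an implicit hypothesis, as in Theorem 2.7, rather than derive it. With that in place, the substitution is immediate and no further obstacle is expected.
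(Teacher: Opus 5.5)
Your proposal is correct and follows the paper's own route. The corollary is Theorem 2.25 with $[n]_g$ replaced by $[n]_{\alpha,\beta,\gamma}$, and Theorem 2.25 is itself the case $\mu=1$ of Theorem 2.10 via (2.9) and Lemma 2.2; your computation of $\kappa$ and $|2\kappa-1|$ agrees with it.

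Your caveat that $[2]_{\alpha,\beta,\gamma}\neq1$ and $[3]_{\alpha,\beta,\gamma}>1$ must be assumed is well placed, because the paper leaves it implicit. For example, $\gamma=\tfrac12$, $\alpha=0.1$, $\beta=0.2$ gives $[3]_{\alpha,\beta,\gamma}=0.07$, so the right-hand side is negative while $f(z)=z$ lies in the class with $\mathcal{H}_2(1)=0$. One further correction: the function you substitute should be the normalized $z+\sum_{n\ge2}[n]_{\alpha,\beta,\gamma}z^n\in\mathbb{A}$, not the $g$ of Remark 1.2 itself, since that $g$ has $g'(0)=2(1-\gamma)$ and so is not in $\mathbb{A}$ unless $\gamma=\tfrac12$.
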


\section{The Class $\mathcal{C}_{g}(\phi)$ Of Convex Functions }
\setcounter{equation}{0} \setcounter{theorem}{0}

Below, we conduct coefficient estimations for the class $\mathcal{C}_{g}(\phi)$ of convex functions and further investigate its Fekete-Szeg\"{o} inequalities, Toeplitz determinants $\mathcal{T}_2(2)$, and $\mathcal{T}_3(1)$. Next, we will study the functions class $\mathcal{C}_{g}(\phi)$ and derive some theorems and corollaries.
\begin{definition}
If $f\in \mathbb{A} $ satisfies the following subordination relation
\begin{align}
\frac{D_{g}(zD_{g}f(z))}{D_{g}f(z)}\prec \phi(z),\ \ z\in\mathbb{U}\nonumber,
\end{align}
where $g\in \mathbb{A}$, then $f$ is said to belong to $f\in\mathcal{C}_{g}(\phi)$.

For simplicity, the function $f$ is said to belong to $f\in\mathcal{C}_{\alpha, \beta, \gamma}(\phi)$ when $g(z)=\frac{2(1-\gamma)z}{(1-\alpha z)(1-\beta z)}$ in Remark 1.2, then $f\in\mathcal{C}_{p,q}(\phi)$, $f\in\mathcal{C}_{q}(\phi)$ and $f\in\mathcal{C}_{1^{-}}(\phi)$ respectively when $g(z)$ corresponds to the cases (1), (2), and (3) in this Remark.

\end{definition}

Example related to the function class $\mathcal{C}_{g}(\phi)$ is subsequently provided.
\begin{example}
A function $f\in\mathcal{C}_{1^{-}}(\phi)$ if and only if there exists an analytic function $h\prec\phi(z)$ such that
\begin{align}\nonumber
f(z)=\int^{z}_{0} \exp \left(\int^{\omega}_{0}\frac{h(t)-1}{t}dt\right)d\omega,\ \ \ z\in\mathbb{U}.
\end{align}
This integral representation provides many examples of functions in the class $\mathcal{C}_{1^{-}}(\phi)$. Let
\begin{align}
h(t)=\phi(t^{n})=1+t^{n},\nonumber
\end{align}
where $n\in\mathbb{N}$, and $t\in\mathbb{U}$.
\begin{figure}[!ht]
  \centering
  \includegraphics[width=6cm]{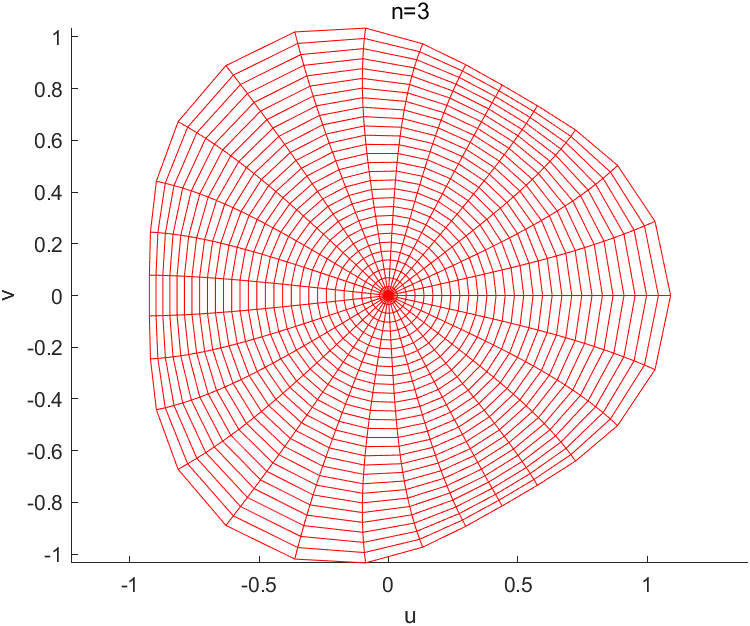}
  \includegraphics[width=6cm]{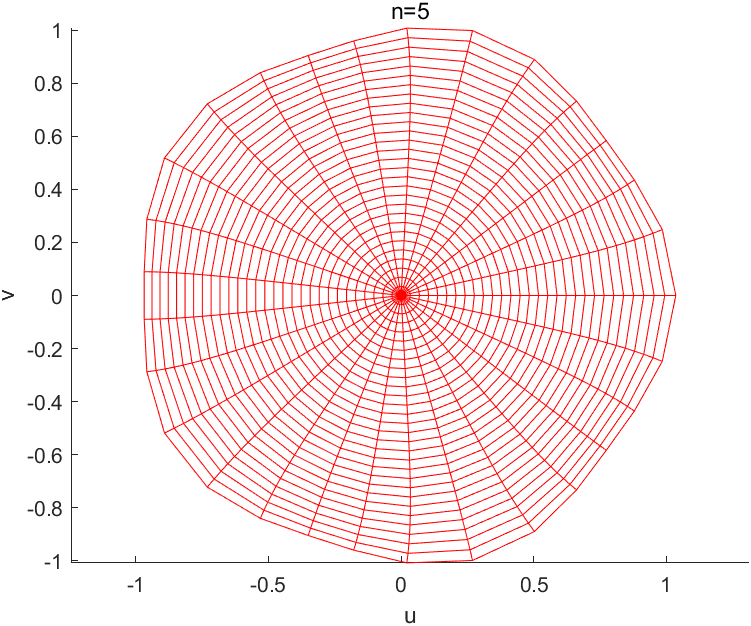}
  \caption{The image of $\mathbb{U}$ under $\psi(z)$ for $n=3, 5$.}
  \label{4}
\end{figure}

Through the straightforward computation, the function (as shown in Figure 2)
\begin{align}
\psi(z)=&\int^{z}_{0} \exp \left(\int^{\omega}_{0}\frac{q(t)-1}{t}dt\right)d\omega\nonumber
\\=&z+\frac{z^{n+1}}{n(n+1)}+\frac{z^{2n+1}}{2n^{2}(2n+1)}+\cdot\cdot\cdot\nonumber
\end{align}
is an extremal function in $\mathcal{C}_{1^{-}}(\phi)$.

\end{example}
Below, we study the coefficients and Fekete-Szeg\"{o} inequalities of the function class $\mathcal{C}_{g}(\phi)$. First, we estimate the initial coefficients of the function class $\mathcal{C}_{g}(\phi)$.
\begin{theorem}
Assume that $f$ is defined as in (\ref{e1.1}). If $f$belongs to the function class $\mathcal{C}_{g}(\phi)$, then
\begin{align}
\mid a_{2}\mid \leq \frac{B_{1}}{[2]_{g}([2]_{g}-1)}\nonumber
\end{align}
\begin{align}\nonumber
\mid a_{3}\mid \leq\frac{B_{1}}{[3]_{g}([3]_{g}-1)}\max\left\{1, \left|\frac{B_{2}}{B_{1}}+\frac{B_{1}}{[2]_{g}-1}\right|\right\}.\nonumber
\end{align}
\end{theorem}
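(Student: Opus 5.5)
The plan is to follow the proof of Theorem 2.7 step by step. The only change is that $zD_gf(z)/f(z)$ is replaced by the quotient $D_g(zD_gf(z))/D_gf(z)$ that defines $\mathcal{C}_g(\phi)$.

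\emph{Subordination and Carath\'eodory function.} Since $f\in\mathcal{C}_g(\phi)$, there is a Schwarz function $u$ with
$$\frac{D_g(zD_gf(z))}{D_gf(z)}=\phi(u(z)).$$
As before, set $p=(1+u)/(1-u)\in\mathcal{P}$. The expansion (2.3) of $\phi(u(z))$ in terms of $c_1,c_2,B_1,B_2$ can then be reused unchanged.

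\emph{Expanding the left-hand side.} From the definition of $D_g$,
$$D_gf(z)=1+\sum_{n\ge2}[n]_g a_n z^{n-1},\qquad zD_gf(z)=z+\sum_{n\ge2}[n]_g a_n z^{n}.$$
Applying $D_g$ once more multiplies each coefficient by $[n]_g$ again, so
$$D_g(zD_gf(z))=1+\sum_{n\ge2}[n]_g^2a_nz^{n-1}.$$
Dividing the two power series and keeping terms up to $z^2$ gives
$$\frac{D_g(zD_gf)}{D_gf}=1+[2]_g([2]_g-1)a_2z+\Big([3]_g([3]_g-1)a_3-[2]_g^2([2]_g-1)a_2^2\Big)z^2+\cdots.$$

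\emph{Comparing coefficients.} Matching with (2.3) yields
$$[2]_g([2]_g-1)a_2=\frac{c_1B_1}{2},$$
$$[3]_g([3]_g-1)a_3-[2]_g^2([2]_g-1)a_2^2=\Big(\frac{c_2}{2}-\frac{c_1^2}{4}\Big)B_1+\frac{c_1^2}{4}B_2.$$
Substituting $a_2$ into the second relation, the $a_2^2$ term contributes $c_1^2B_1^2/\big(4([2]_g-1)\big)$. The factors $[2]_g^2$ cancel, and this cancellation is the point to verify carefully. One then gets
$$a_3=\frac{B_1}{2[3]_g([3]_g-1)}\Big[c_2-\frac12\Big(1-\frac{B_2}{B_1}-\frac{B_1}{[2]_g-1}\Big)c_1^2\Big],$$
which is exactly (2.8) with the extra factor $1/[3]_g$.

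\emph{Applying the lemmas.} Lemma 2.1 ($|c_1|\le2$) gives the bound on $|a_2|$. For $a_3$, apply Lemma 2.2 with
$$\mu=\frac12\Big(1-\frac{B_2}{B_1}-\frac{B_1}{[2]_g-1}\Big),$$
so that $|2\mu-1|=\big|B_2/B_1+B_1/([2]_g-1)\big|$. This gives
$$|a_3|\le\frac{B_1}{[3]_g([3]_g-1)}\max\Big\{1,\Big|\frac{B_2}{B_1}+\frac{B_1}{[2]_g-1}\Big|\Big\},$$
which is the claimed bound.

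The only real obstacle is computing the $z^2$ coefficient of the quotient correctly: $A_3-b_3-b_2(A_2-b_2)$ with $A_n=[n]_g^2a_n$ and $b_n=[n]_ga_n$. Getting this right is what makes the $a_2^2$ term reduce to the same combination as in the starlike case.
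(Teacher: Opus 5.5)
Your proposal is correct and follows the paper's proof essentially verbatim. It uses the same Schwarz-function/Carath\'eodory setup, the same expansion of $D_g(zD_gf)/D_gf$ up to $z^2$, the same coefficient relations (3.3)--(3.6), and the same application of Lemmas 2.1 and 2.2; your explicit check that the $[2]_g^2$ factors cancel in the $a_2^2$ term is exactly the step the paper leaves implicit.
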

\begin{proof}
Assume that $f\in\mathcal{C}_{g}(\phi)$. Then, there exists a Schwarz function $u(z)$ satisfying $u(0) = 0$ and $|u(z)| \leq 1$, which is analytic in $\mathbb{U}$, such that
\begin{align}\label{e2.10}
\frac{D_{g}(zD_{g}f(z))}{D_{g}f(z)}=\phi(u(z)).
\end{align}
Since $f\in \mathbb{A} $, the straightforward computation gives	
\begin{align}\label{e2.11}\nonumber
\frac{D_{g}(zD_{g}f(z))}{D_{g}f(z)}=&1+[2]_{g}\left([2]_{g}-1\right)a_{2}z\\
&+\left([3]_{g}([3]_{g}-1)a_{3}-[2]_{g}^{2}([2]_{g}-1)a_{2}^{2}\right)z^{2}+\cdot\cdot\cdot.
\end{align}
By combining the equations (\ref{e2.3}), (\ref{e2.10}), and (\ref{e2.11}), we obtain
\begin{align}
[2]_{g}\left([2]_{g}-1\right)a_{2} &= \frac{c_{1}B_{1}}{2}, \\
[3]_{g}([3]_{g}-1)a_{3}-[2]_{g}^{2}([2]_{g}-1)a_{2}^{2} &= \left(\frac{c_{2}}{2}-\frac{c_{1}^{2}}{4}\right)B_{1}+\frac{c_{1}^{2}}{4}B_{2}.
\end{align}
From (3.3) and (3.4), it follows that
\begin{align}
a_{2} &= \frac{c_1B_{1}}{2[2]_{g}([2]_{g}-1)}, \\
a_{3} &= \frac{B_{1}}{2[3]_{g}([3]_{g}-1)}\left[c_{2}-\frac{1}{2}\left(1-\frac{B_{2}}{B_{1}}-\frac{B_{1}}{[2]_{g}-1}\right)c_{1}^{2}\right].
\end{align}
Applying Lemmas 2.1 and 2.2 to (3.5) and (3.6), respectively, we derive
\begin{align}
\mid a_{2}\mid \leq \frac{B_{1}}{[2]_{g}([2]_{g}-1)}\nonumber
\end{align}
and
\begin{align}\nonumber
\mid a_{3}\mid \leq\frac{B_{1}}{[3]_{g}([3]_{g}-1)}\max\left\{1, \left|\frac{B_{2}}{B_{1}}+\frac{B_{1}}{[2]_{g}-1}\right|\right\},\nonumber
\end{align}
which completes the proof of Theorem 3.2.
\end{proof}

Next, substituting $g(z)$ from (2) Remark 1.2 into Theorem 3.2, we attain the following corollaries.
\begin{corollary}
Let $f$ be given by (1.1). If $f\in\mathcal{C}_{q}(\phi)$ , then
\begin{align}
\mid a_{2}\mid \leq \frac{B_{1}}{q(1+q)}\nonumber,
\ \
\mid a_{3}\mid \leq\frac{B_{1}}{q(1+q)(1+q+q^{2})}\max\left\{1, \left|\frac{B_{2}}{B_{1}}+\frac{B_{1}}{q}\right|\right\}.\nonumber
\end{align}
As $q\rightarrow1^{-}$,
\begin{align}\nonumber
\mid a_{2}\mid \leq \frac{B_{1}}{2},\quad\mid a_{3}\mid \leq\frac{B_{1}}{6}\max\left\{1, \left|\frac{B_{2}}{B_{1}}+B_{1}\right|\right\}.
\end{align}
\end{corollary}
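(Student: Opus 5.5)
The plan is to obtain the corollary directly from Theorem 3.2 by identifying the numbers $[n]_{g}$ for the choice of $g$ in case (2) of Remark 1.2, and then simplifying. No new coefficient estimates are needed: the bounds on $c_1, c_2$ from Lemmas 2.1 and 2.2 have already been used in the proof of Theorem 3.2.

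First I would compute $[n]_g$ in this case. Take $\gamma=1/2$, $\alpha=1$, $\beta=q$ with $q\in(0,1)$. Then $2(1-\gamma)=1$ and
\begin{align*}
[n]_{g}=[n]_{1,q,1/2}=\frac{1-q^{n}}{1-q}=[n]_{q}.
\end{align*}
This is consistent with the expansion
\begin{align*}
\frac{z}{(1-z)(1-qz)}=\sum_{n\geq1}\frac{1-q^{n}}{1-q}\,z^{n}.
\end{align*}
In particular $[2]_g=1+q$ and $[3]_g=1+q+q^2$.

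Next I would substitute these values into the quantities appearing in Theorem 3.2:
\begin{align*}
[2]_{g}-1&=q, & [2]_{g}([2]_{g}-1)&=q(1+q),\\
[3]_{g}-1&=q+q^{2}=q(1+q), & [3]_{g}([3]_{g}-1)&=q(1+q)(1+q+q^{2}).
\end{align*}
The inner term becomes $\frac{B_{1}}{[2]_{g}-1}=\frac{B_1}{q}$. Putting these into the two estimates of Theorem 3.2 gives exactly the stated bounds for $|a_2|$ and $|a_3|$ in the class $\mathcal{C}_{q}(\phi)$.

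For the limiting statement I would let $q\to1^{-}$. Every expression is a rational function of $q$ that is continuous at $q=1$: the denominators $q$, $q(1+q)$ and $q(1+q)(1+q+q^2)$ tend to $1$, $2$ and $6$, and the term $\frac{B_1}{q}$ tends to $B_1$. Since $\max\{1,|\cdot|\}$ is continuous, the bounds pass to the limit as $B_1/2$ and $\frac{B_1}{6}\max\{1,|B_2/B_1+B_1|\}$. This matches case (3) of Remark 1.2, where $g(z)=z/(1-z)^2$ gives $[n]_g=n$.

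The only step needing any care is the identification $[n]_g=[n]_q$ from the Remark's parametrization, namely checking that the factor $2(1-\gamma)$ equals $1$. The rest is routine algebraic substitution.
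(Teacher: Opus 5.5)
Your proposal is correct and follows the paper's own route. You specialize Theorem 3.2 to case (2) of Remark 1.2, where $[2]_g=1+q$ and $[3]_g=1+q+q^2$, so that $[2]_g([2]_g-1)=q(1+q)$ and $[3]_g([3]_g-1)=q(1+q)(1+q+q^2)$, and then let $q\to1^-$.

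For the limiting statement, your observation that the same bounds follow by applying Theorem 3.2 directly with $[n]_g=n$ is the cleaner justification. Passing the bounds to the limit is weaker, because a fixed $f\in\mathcal{C}_{1^-}(\phi)$ need not belong to $\mathcal{C}_q(\phi)$ for any $q<1$.
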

Now we proceed to estimate Fekete-Szeg\"{o} functional inequality for the function class $\mathcal{C}_{g}(\phi)$.
\begin{theorem}
Assume that $f$ is given by (\ref{e1.1}). If $f\in\mathcal{C}_{g}(\phi)$, then for any complex number $\mu$,
\begin{align}\nonumber
\mid a_3-\mu a_2^2\mid\leq \frac{B_{1}}{[3]_{g}([3]_{g}-1)}\max\left\{1,\left|\frac{B_{2}}{B_{1}}+
\frac{B_{1}}{[2]_{g}-1}\left(1-\frac{[3]_{g}([3]_{g}-1)}{[2]_{g}^{2}([2]_{g}-1)}\mu\right)\right|\right\}.
\end{align}
\end{theorem}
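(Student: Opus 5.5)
The proof will mirror that of Theorem 2.10, now using the coefficient representations (3.5) and (3.6) obtained in the proof of Theorem 3.2. From (3.5) we get
\begin{align*}
a_{2}^{2}=\frac{c_{1}^{2}B_{1}^{2}}{4[2]_{g}^{2}([2]_{g}-1)^{2}},
\end{align*}
and (3.6) gives $a_3$ as $\frac{B_{1}}{2[3]_{g}([3]_{g}-1)}$ times $c_2-\frac12\bigl(1-\frac{B_2}{B_1}-\frac{B_1}{[2]_g-1}\bigr)c_1^2$. So the first step is to write $a_3-\mu a_2^2$ in the form
\begin{align*}
a_{3}-\mu a_{2}^{2}=\frac{B_{1}}{2[3]_{g}([3]_{g}-1)}\left(c_{2}-\kappa c_{1}^{2}\right).
\end{align*}
To do this, we factor $\frac{B_{1}}{2[3]_{g}([3]_{g}-1)}$ out of the $\mu$-term, which turns it into $\mu\frac{B_1}{2([2]_g-1)}\cdot\frac{[3]_g([3]_g-1)}{[2]_g^2([2]_g-1)}c_1^2$.

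Collecting the $c_1^2$ terms then gives
\begin{align*}
\kappa=\frac{1}{2}\left(1-\frac{B_{2}}{B_{1}}-\frac{B_{1}}{[2]_{g}-1}\left(1-\frac{[3]_{g}([3]_{g}-1)}{[2]_{g}^{2}([2]_{g}-1)}\mu\right)\right).
\end{align*}
This is exactly the $\kappa$ of Theorem 2.10, with the ratio $\frac{[3]_g-1}{[2]_g-1}$ replaced by $\frac{[3]_g([3]_g-1)}{[2]_g^2([2]_g-1)}$.

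Next we apply Lemma 2.2 with the complex parameter $\kappa$, which gives $|c_2-\kappa c_1^2|\le 2\max\{1,|2\kappa-1|\}$. Since
\begin{align*}
2\kappa-1=-\left(\frac{B_{2}}{B_{1}}+\frac{B_{1}}{[2]_{g}-1}\left(1-\frac{[3]_{g}([3]_{g}-1)}{[2]_{g}^{2}([2]_{g}-1)}\mu\right)\right),
\end{align*}
multiplying by the prefactor $\frac{B_{1}}{2[3]_{g}([3]_{g}-1)}$ yields the stated bound.

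The only delicate step is the bookkeeping in the first step. We must check that the $\frac{1}{[2]_g^2}$ factor from $a_2^2$ combines correctly with the $\frac{1}{[3]_g([3]_g-1)}$ prefactor, so that the coefficient of $\mu$ inside $\kappa$ is exactly $\frac{B_1}{[2]_g-1}\cdot\frac{[3]_g([3]_g-1)}{[2]_g^2([2]_g-1)}$. Beyond that we implicitly assume, as in Theorem 3.2, that $[2]_g([2]_g-1)\neq0$ and $[3]_g([3]_g-1)\ne0$ (with $[3]_g([3]_g-1)>0$ for the bound as written), so that the divisions are legitimate.
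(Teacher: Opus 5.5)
Your proposal is correct and is essentially the paper's own proof: the paper also uses (3.5)--(3.6) to write $a_3-\mu a_2^2=\frac{B_1}{2[3]_g([3]_g-1)}(c_2-\iota c_1^2)$ with exactly your parameter (it calls it $\iota$ rather than $\kappa$), and then applies Lemma 2.2. Your remark that $[3]_g([3]_g-1)>0$ is needed for the bound as written (otherwise the prefactor needs an absolute value) is a valid caveat that the paper leaves implicit.
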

\begin{proof}
From (3.5) and (3.6), we have
\begin{align}\nonumber
a_{2}^{2} &= \frac{c_{1}^{2}B_{1}^{2}}{4[2]_{g}^{2}([2]_{g}-1)^{2}}, \\\nonumber
a_{3} &= \frac{B_{1}}{2[3]_{g}([3]_{g}-1)}\left[c_{2}-\frac{1}{2}\left(1-\frac{B_{2}}{B_{1}}-\frac{B_{1}}{[2]_{g}-1}\right)c_{1}^{2}\right].
\end{align}
Hence
\begin{align}\nonumber\label{e2.16}
a_{3}-\mu a_{2}^{2}&=\frac{B_{1}}{2[3]_{g}([3]_{g}-1)}\left[c_{2}-\frac{1}{2}\left(1-\frac{B_{2}}{B_{1}}-\frac{B_{1}}{[2]_{g}-1}\right)c_{1}^{2}\right]-\frac{\mu c_{1}^{2}B_{1}^{2}}{4[2]_{g}^{2}([2]_{g}-1)^{2}}\\
&=\frac{B_{1}}{2[3]_{g}([3]_{g}-1)}(c_{2}-\iota c_{1}^{2}),
\end{align}
where
\begin{align}\nonumber
\iota=\frac{1}{2}\left(1-\frac{B_{2}}{B_{1}}-
\frac{B_{1}}{[2]_{g}-1}\left(1-\frac{[3]_{g}([3]_{g}-1)}{[2]_{g}^{2}([2]_{g}-1)}\mu\right)\right).
\end{align}
Applying Lemma 2.2 to (\ref{e2.16}), we obtain
\begin{align}\nonumber
\mid a_3-\mu a_2^2\mid\leq \frac{B_{1}}{[3]_{g}([3]_{g}-1)}\max\left\{1,\left|\frac{B_{2}}{B_{1}}+
\frac{B_{1}}{[2]_{g}-1}\left(1-\frac{[3]_{g}([3]_{g}-1)}{[2]_{g}^{2}([2]_{g}-1)}\mu\right)\right|\right\}.
\end{align}
This finishes the proof of Theorem 3.4.
\end{proof}

Next, substituting $g(z)$ from (2) in Remark 1.2 into Theorem 3.4 yields the following corollaries.
\begin{corollary}
Let $f\in \mathbb{A} $ be given by (1.1). If $f\in\mathcal{C}_{q}(\phi)$ and $\mu \in \mathbb{C}$, then
\begin{align}\nonumber
\mid a_3-\mu a_2^2\mid\leq \frac{B_{1}}{q(1+q)(1+q+q^{2})}\max\left\{1,\left|\frac{B_{2}}{B_{1}}+
\frac{B_{1}}{q}\left(1-\frac{1+q+q^{2}}{1+q}\mu\right)\right|\right\}.
\end{align}
As $q\rightarrow1^{-}$,
\begin{align}\nonumber
\mid a_3-\mu a_2^2\mid\leq \frac{B_{1}}{6}\max\left\{1,\left|\frac{B_{2}}{B_{1}}+
B_{1}\left(1-\frac{3}{2}\mu\right)\right|\right\}.
\end{align}
\end{corollary}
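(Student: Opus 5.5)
The statement is a specialization of Theorem 3.4, so the plan is to substitute the $q$-case of $g$ into that theorem rather than redo the subordination argument.

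First, identify $[n]_g$ in case (2) of Remark 1.2. There $g(z)=\frac{z}{(1-qz)(1-z)}$, so $g_n=\frac{1-q^n}{1-q}=[n]_q$. This gives
\begin{align*}
[2]_g=1+q,\qquad [3]_g=1+q+q^2,
\end{align*}
and therefore
\begin{align*}
[2]_g-1=q,\qquad [3]_g-1=q+q^2=q(1+q).
\end{align*}

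Next, compute the quantities appearing in Theorem 3.4:
\begin{align*}
[3]_g([3]_g-1)=q(1+q)(1+q+q^2),\qquad [2]_g^2([2]_g-1)=q(1+q)^2,
\end{align*}
so that
\begin{align*}
\frac{[3]_g([3]_g-1)}{[2]_g^2([2]_g-1)}=\frac{1+q+q^2}{1+q}.
\end{align*}
Inserting these into the bound of Theorem 3.4, together with $\frac{B_1}{[2]_g-1}=\frac{B_1}{q}$, yields the first inequality exactly as stated.

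For the limit $q\to1^-$, let $q\to1^-$ in the quantities above. The prefactor tends to $\frac{B_1}{1\cdot 2\cdot 3}=\frac{B_1}{6}$, the ratio tends to $\frac32$, and $\frac{B_1}{q}\to B_1$. Since the right-hand side is continuous in $q$ and the inequality holds for every $q\in(0,1)$, passing to the limit gives the second estimate.

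There is one point of care, and it is minor. The limiting statement should be read as the bound for the class $\mathcal{C}_{1^-}(\phi)$, where the operator with $g(z)=z/(1-z)^2$ has $[n]_g=n$. Applying Theorem 3.4 directly with $[2]_g=2$ and $[3]_g=3$ gives the same expression, so no genuine limiting argument over the classes is needed. The remaining work is routine algebra.
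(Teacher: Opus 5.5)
Your proposal is correct and follows the paper's route exactly: the paper also obtains the corollary by substituting $[2]_g=1+q$ and $[3]_g=1+q+q^2$ (so $[2]_g-1=q$ and $[3]_g-1=q(1+q)$) into Theorem 3.4. Your closing remark is the right way to read the $q\to1^-$ case, namely as Theorem 3.4 applied directly with $g(z)=z/(1-z)^2$ and $[n]_g=n$; the bare continuity argument would not by itself suffice, because the class $\mathcal{C}_q(\phi)$ changes with $q$ and a fixed $f$ need not lie in $\mathcal{C}_q(\phi)$ for all $q$ near $1$.
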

If we take $\mu \in \mathbb{R}$, then based on the proof of Theorem 2.16 and Lemma 2.3, the Fekete-Szeg\"{o} inequality for the function class $\mathcal{C}_{g}(\phi)$ can be estimated.
\begin{theorem}
Let $f \in \mathcal{A}$ be given by (1.1), and $\mu \in \mathbb{R}$. If $f \in \mathcal{C}_{g}(\phi)$, then
\begin{equation*}
\left|a_{3} - \mu a_{2}^{2}\right| \leq \frac{B_{1}}{[3]_{g}([3]_{g}-1)}
\begin{cases}
-2\iota + 1, & \mu \leq \iota_{1}, \\
1, &  \iota_{1} \leq \mu \leq \iota_{2}, \\
2\iota - 1, &  \mu \geq \iota_{2},
\end{cases}
\end{equation*}
where
\begin{align}\nonumber
\iota_{1}=\frac{B_{1}^{2}[2]_{g}^{2}([2]_{g}-1)+(B_{2}-B_{1})[2]_{g}^{2}([2]_{g}-1)^{2}}{B_{1}^{2}[3]_{g}([3]_{g}-1)},
\end{align}
\begin{align}\nonumber
\iota_{2}=\frac{B_{1}^{2}[2]_{g}^{2}([2]_{g}-1)+(B_{2}+B_{1})[2]_{g}^{2}([2]_{g}-1)^{2}}{B_{1}^{2}[3]_{g}([3]_{g}-1)}.
\end{align}
\end{theorem}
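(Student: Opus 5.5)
The plan is to reuse the representation (3.7) from the proof of Theorem 3.4,
\begin{align*}
a_{3}-\mu a_{2}^{2}=\frac{B_{1}}{2[3]_{g}([3]_{g}-1)}\left(c_{2}-\iota c_{1}^{2}\right),
\end{align*}
where
\begin{align*}
\iota=\frac{1}{2}\left(1-\frac{B_{2}}{B_{1}}-\frac{B_{1}}{[2]_{g}-1}\left(1-\frac{[3]_{g}([3]_{g}-1)}{[2]_{g}^{2}([2]_{g}-1)}\mu\right)\right),
\end{align*}
and then apply Lemma 2.3 with $\mu$ replaced by $\iota$. Since $\mu\in\mathbb{R}$, $B_{1}>0$ and $B_{2}$ are real, and the $g$-numbers $[2]_{g},[3]_{g}$ are real with $[2]_{g}>1$ and $[3]_{g}>1$ (as is implicit in the earlier theorems), the parameter $\iota$ is real. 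Lemma 2.3 then gives $|c_{2}-\iota c_{1}^{2}|\le 2-4\iota$, $2$, or $4\iota-2$ according as $\iota\le 0$, $0\le\iota\le1$, or $\iota\ge1$. Multiplying by $\frac{B_{1}}{2[3]_{g}([3]_{g}-1)}$ produces exactly the three expressions $-2\iota+1$, $1$, $2\iota-1$ appearing in the statement.

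It remains to convert the conditions on $\iota$ into conditions on $\mu$. The quantity $\iota$ is an affine function of $\mu$ with slope
\begin{align*}
\frac{B_{1}[3]_{g}([3]_{g}-1)}{2[2]_{g}^{2}([2]_{g}-1)^{2}}.
\end{align*}
Under the positivity assumptions above this slope is positive, so the map $\mu\mapsto\iota$ is increasing and the order of the three cases is preserved.

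\textbf{Solving $\iota=0$.} This condition reads
\begin{align*}
\frac{B_{1}}{[2]_{g}-1}\left(1-\frac{[3]_{g}([3]_{g}-1)}{[2]_{g}^{2}([2]_{g}-1)}\mu\right)=1-\frac{B_{2}}{B_{1}}.
\end{align*}
Solving for $\mu$ and clearing denominators gives
\begin{align*}
\mu=\frac{B_{1}^{2}[2]_{g}^{2}([2]_{g}-1)+(B_{2}-B_{1})[2]_{g}^{2}([2]_{g}-1)^{2}}{B_{1}^{2}[3]_{g}([3]_{g}-1)}=\iota_{1}.
\end{align*}

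\textbf{Solving $\iota=1$.} This condition replaces $1-\frac{B_{2}}{B_{1}}$ by $-1-\frac{B_{2}}{B_{1}}$, which yields $\mu=\iota_{2}$, the same expression with $B_{2}+B_{1}$ in place of $B_{2}-B_{1}$.

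Hence $\iota\le 0\iff\mu\le\iota_{1}$, $0\le\iota\le1\iff\iota_{1}\le\mu\le\iota_{2}$, and $\iota\ge1\iff\mu\ge\iota_{2}$, which completes the argument.

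The only delicate point is the monotonicity and sign bookkeeping. We need the positivity of $[2]_{g}-1$ and $[3]_{g}([3]_{g}-1)$ so that the $\mu$-intervals are not reversed. I would state this as the standing assumption already used implicitly in Theorems 3.2 and 3.4. Given that, the rest is direct algebra mirroring Theorem 2.13.
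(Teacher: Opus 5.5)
Your proposal is correct and follows the route the paper indicates. The paper writes no proof; it only says the result follows from the representation (3.7) in the proof of Theorem 3.4 together with Lemma 2.3. Your argument supplies those details: apply Lemma 2.3 to $c_2-\iota c_1^2$, then convert $\iota\le 0$ and $\iota\ge 1$ into $\mu\le\iota_1$ and $\mu\ge\iota_2$ via the increasing affine map $\mu\mapsto\iota$, which reproduces the stated bounds and thresholds. You also make explicit the standing assumption that $[2]_g,[3]_g$ are real with $[3]_g([3]_g-1)>0$, so that $\iota$ is real and the slope is positive; the paper leaves this implicit, and it is genuinely needed.
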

By applying $g(z)$ from (2) in Remark 1.2 into Theorem 3.6 yields the following corollaries.
\begin{corollary}
Let $f\in \mathbb{A} $ be given by (1.1). If $f\in\mathcal{C}_{q}(\phi)$ and $\mu \in \mathbb{R}$ , then
\begin{equation*}
\left|a_{3} - \mu a_{2}^{2}\right| \leq \frac{B_{1}}{q(1+q)(1+q+q^{2})}
\begin{cases}
-2\iota + 1, & \mu \leq \iota_{1}, \\
1, &  \iota_{1} \leq \mu \leq \iota_{2}, \\
2\iota - 1, &  \mu \geq \iota_{2},
\end{cases}
\end{equation*}
where
\begin{align}\nonumber
\iota_{1}=\frac{(1+q)(B_{1}^{2}+q(B_{2}-B_{1}))}{B_{1}^{2}(1+q+q^{2})},
\end{align}
\begin{align}\nonumber
\iota_{2}=\frac{(1+q)(B_{1}^{2}+q(B_{2}+B_{1}))}{B_{1}^{2}(1+q+q^{2})}.
\end{align}
When $q\rightarrow1^{-}$,
\begin{equation*}
\left|a_{3} - \mu a_{2}^{2}\right| \leq \frac{B_{1}}{6}
\begin{cases}
-2\iota + 1, & \mu \leq \iota_{1}, \\
1, &  \iota_{1} \leq \mu \leq \iota_{2}, \\
2\iota - 1, &  \mu \geq \iota_{2},
\end{cases}
\end{equation*}
where
\begin{align}\nonumber
\iota_{1}=\frac{2B_{1}^{2}+2(B_{2}-B_{1})}{3B_{1}^{2}},
\end{align}
\begin{align}\nonumber
\iota_{2}=\frac{2B_{1}^{2}+2(B_{2}+B_{1})}{3B_{1}^{2}}.
\end{align}
\end{corollary}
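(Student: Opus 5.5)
The plan is to obtain Corollary 3.7 as a direct specialization of Theorem 3.6, by substituting the $q$-case of Remark 1.2, case (2). No new extremal analysis is needed: Theorem 3.6 already rests on writing $a_3-\mu a_2^2=\frac{B_1}{2[3]_g([3]_g-1)}(c_2-\iota c_1^2)$ and applying Lemma 2.3 with the real parameter $\iota$. So the whole task is to identify the coefficients $[n]_g$ and simplify.

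\textbf{Step 1 (coefficients).} Take $\gamma=1/2$, $\alpha=1$, $\beta=q$. Then $g(z)=\frac{z}{(1-z)(1-qz)}$ and
\[
g_n=\frac{1-q^n}{1-q}=[n]_q .
\]
Hence $[2]_g=1+q$ and $[3]_g=1+q+q^2$. This gives
\[
[2]_g-1=q,\qquad [3]_g-1=q(1+q),\qquad [3]_g([3]_g-1)=q(1+q)(1+q+q^2),
\]
and the last expression is the stated prefactor.

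\textbf{Step 2 (the thresholds).} Substitute these values into $\iota_1$ and $\iota_2$ of Theorem 3.6. In the numerator, factor out $[2]_g^2([2]_g-1)=q(1+q)^2$ to get $q(1+q)^2\bigl(B_1^2+q(B_2\mp B_1)\bigr)$. The denominator is $B_1^2\,q(1+q)(1+q+q^2)$. Cancelling $q(1+q)$ gives
\[
\iota_{1,2}=\frac{(1+q)\bigl(B_1^2+q(B_2\mp B_1)\bigr)}{B_1^2(1+q+q^2)} .
\]
The parameter $\iota$ specializes in the same way, to $\iota=\frac12\bigl(1-\frac{B_2}{B_1}-\frac{B_1}{q}\bigl(1-\frac{1+q+q^2}{1+q}\mu\bigr)\bigr)$, matching Corollary 3.5.

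\textbf{Step 3 (the case $q\to1^-$).} The cleanest route is to apply Theorem 3.6 directly to case (3), $g(z)=z/(1-z)^2$, for which $[n]_g=n$. Alternatively, one can let $q\to1^-$ in the already simplified formulas, since every expression is a rational function of $q$ that is continuous at $q=1$ with nonzero denominators. Either way the prefactor becomes $B_1/(2\cdot1\cdot3)=B_1/6$. The thresholds become $\iota_{1,2}=\frac{2(B_1^2+B_2\mp B_1)}{3B_1^2}$, which are exactly the stated values.

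\textbf{Main obstacle.} There is no analytic difficulty. The one point needing care is justifying the identification of $D_g$ with the $q$-derivative, and of the limit case with $\mathcal{C}_{1^-}(\phi)$; that is, checking $g_n=[n]_q$, and $g_n=n$ in the limit. I would handle this by expanding $\frac{1}{(1-z)(1-qz)}$ as a Cauchy product of geometric series. Once that is settled, the rest is the algebraic cancellation of the common factor $q(1+q)$ in Step 2.
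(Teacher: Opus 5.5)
Your proposal is correct and takes the same route as the paper. The paper obtains the corollary by substituting $[2]_g=1+q$ and $[3]_g=1+q+q^2$ from case (2) of Remark 1.2 into Theorem 3.6, and your simplifications all check out: the prefactor, the thresholds $\iota_1,\iota_2$ after cancelling $q(1+q)$, and the limiting values $B_1/6$ and $\iota_{1,2}=2(B_1^2+B_2\mp B_1)/(3B_1^2)$ (your suggestion to handle $q\to1^-$ by applying Theorem 3.6 directly with $[n]_g=n$ is slightly more rigorous than taking the limit of the bound, since the class $\mathcal{C}_q(\phi)$ itself depends on $q$).
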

Subsequently, the Toeplitz determinant of the function class $\mathcal{C}_{g}(\phi)$ is studied, starting with an estimation of $\mathcal{T}_2(2)$.
\begin{theorem}
Suppose that $f$ is given by (\ref{e1.1}). If $f\in\mathcal{C}_{g}(\phi)$, then

(1) If $|\frac{B_{2}}{B_{1}}+\frac{B_{1}}{[2]_{g}-1}|\leq1$, then
\begin{align}\nonumber
\mid \mathcal{T}_2(2)\mid=\mid a_{2}^2-a_{3}^{2}\mid=
\frac{B_{1}^{2}}{[2]_{g}^{2}([2]_{g}-1)^{2}}-\frac{B_{1}^{2}}{[3]_{g}^{2}([3]_{g}-1)^{2}}.
\end{align}

(2) If $|\frac{B_{2}}{B_{1}}+\frac{B_{1}}{[2]_{g}-1}|>1$, then
\begin{align}\nonumber
\mid \mathcal{T}_2(2)\mid=\mid a_{2}^2-a_{3}^{2}\mid\leq
\frac{B_{1}^{2}}{[2]_{g}^{2}([2]_{g}-1)^{2}}+\frac{B_{1}^{2}}{[3]_{g}^{2}([3]_{g}-1)^{2}}
\times\left|\frac{B_{2}}{B_{1}}+\frac{B_{1}}{[2]_{g}-1}\right|^{2}.
\end{align}
\end{theorem}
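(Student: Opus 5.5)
The plan is to deduce the statement directly from the coefficient bounds already established for $\mathcal{C}_{g}(\phi)$ in Theorem 3.2, exactly mirroring Theorem 2.16 for the starlike class. Write $\mathcal{T}_2(2)=a_2^2-a_3^2$. By (3.5) and Lemma 2.1 we have $|a_2|^2\le \frac{B_{1}^{2}}{[2]_{g}^{2}([2]_{g}-1)^{2}}$. By (3.6) and Lemma 2.2 we have
\begin{align*}
|a_3|\le\frac{B_{1}}{[3]_{g}([3]_{g}-1)}\max\left\{1,\left|\frac{B_{2}}{B_{1}}+\frac{B_{1}}{[2]_{g}-1}\right|\right\}.
\end{align*}
The two cases of the theorem are then distinguished by which entry of this maximum is active.

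In case (2), where $|\frac{B_{2}}{B_{1}}+\frac{B_{1}}{[2]_{g}-1}|>1$, the triangle inequality $|a_2^2-a_3^2|\le |a_2|^2+|a_3|^2$ combined with the two bounds above gives exactly
\begin{align*}
\frac{B_{1}^{2}}{[2]_{g}^{2}([2]_{g}-1)^{2}}+\frac{B_{1}^{2}}{[3]_{g}^{2}([3]_{g}-1)^{2}}\left|\frac{B_{2}}{B_{1}}+\frac{B_{1}}{[2]_{g}-1}\right|^{2}.
\end{align*}
This part is routine.

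Case (1) is the main obstacle, because the statement is written as an equality with a difference. A triangle-inequality argument only yields the sum of the two squared bounds, not their difference. I would therefore read (1) in the same way the paper reads Theorem 2.16(1): it is the value of $|a_2^2-a_3^2|$ attained when both coefficient bounds are saturated simultaneously. To justify this, take $p(z)=\frac{1+z}{1-z}$, so that $c_1=c_2=2$. Substituting into (3.5) and (3.6) gives $a_2=\frac{B_1}{[2]_g([2]_g-1)}$. In $a_3$, the bracket becomes $2-2\left(1-\frac{B_2}{B_1}-\frac{B_1}{[2]_g-1}\right)=2\left(\frac{B_2}{B_1}+\frac{B_1}{[2]_g-1}\right)$. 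When $\frac{B_2}{B_1}+\frac{B_1}{[2]_g-1}=1$ (the boundary of the hypothesis region), this gives $a_3=\frac{B_1}{[3]_g([3]_g-1)}$. Both coefficients are then real and positive, and
\begin{align*}
a_2^2-a_3^2=\frac{B_{1}^{2}}{[2]_{g}^{2}([2]_{g}-1)^{2}}-\frac{B_{1}^{2}}{[3]_{g}^{2}([3]_{g}-1)^{2}}.
\end{align*}
This expression is nonnegative whenever $[3]_g([3]_g-1)\ge[2]_g([2]_g-1)$, which holds in the classical and $q$-cases.

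I would present (1) honestly as the evaluation at this extremal configuration, and note that the general bound $|\mathcal{T}_2(2)|\le \frac{B_{1}^{2}}{[2]_{g}^{2}([2]_{g}-1)^{2}}+\frac{B_{1}^{2}}{[3]_{g}^{2}([3]_{g}-1)^{2}}$ still follows from the triangle inequality under the hypothesis of (1). The sharp difference form is realized by the extremal function described above, namely the member of $\mathcal{C}_{g}(\phi)$ corresponding to $u(z)=z$.
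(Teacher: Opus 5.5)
\textbf{Part (2) is fine; part (1) has a gap that cannot be closed, because (1) is false.}

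Your treatment of (2) is correct. It is the triangle inequality applied to the bounds of Theorem 3.2, which is evidently the intended argument; the paper states this theorem with no proof at all.

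Write $K_n=[n]_{g}([n]_{g}-1)$ and $\Lambda=\frac{B_{2}}{B_{1}}+\frac{B_{1}}{[2]_{g}-1}$, and take the $[n]_{g}$ real, as in every case of Remark 1.2.

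\emph{(1) fails as an identity.} Take $f(z)=z$. It belongs to $\mathcal{C}_{g}(\phi)$, since the quotient is identically $1=\phi(0)$, and it has $\mathcal{T}_2(2)=0$.

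\emph{Reading (1) as ``the value at an extremal configuration'' does not rescue it.} Evaluating $\mathcal{T}_2(2)$ at one function shows that a value is attained, not that it bounds the class. Here the difference is not even an upper bound.

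\emph{The class is rotation invariant.} For $f_\theta(z)=e^{-i\theta}f(e^{i\theta}z)$ one has $D_{g}f_\theta(z)=D_{g}f(e^{i\theta}z)$ and $D_{g}(zD_{g}f_\theta)(z)=D_{g}(zD_{g}f)(e^{i\theta}z)$, so $f_\theta\in\mathcal{C}_{g}(\phi)$ whenever $f$ is.

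\emph{Rotating your own extremal function.} Your $f_1$ has $u(z)=z$, $a_2=B_1/K_2$, $a_3=B_1\Lambda/K_3$. Rotate it by $\theta=\pi/2$; equivalently, take $u(z)=iz$, so $c_1=2i$, $c_2=-2$ in (3.5)--(3.6). This gives $a_2=iB_1/K_2$ and $a_3=-B_1\Lambda/K_3$, hence
\[
|a_2^2-a_3^2|=\frac{B_1^2}{K_2^2}+\frac{B_1^2\Lambda^2}{K_3^2}\ \ge\ \frac{B_1^2}{K_2^2}\ >\ \frac{B_1^2}{K_2^2}-\frac{B_1^2}{K_3^2}.
\]
In the classical case $[n]_{g}=n$ this is the function with $1+zf''(z)/f'(z)=\phi(iz)$. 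It gives $|\mathcal{T}_2(2)|\ge B_1^2/4>2B_1^2/9$, contradicting Corollary 3.9(1). So your claim that ``the sharp difference form is realized by the extremal function'' is wrong. At $\Lambda=1$, the very configuration you chose, the rotated function attains the \emph{sum} $\frac{B_1^2}{K_2^2}+\frac{B_1^2}{K_3^2}$, not the difference.

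\emph{Simultaneous saturation only happens at $\Lambda=1$.} For $|\Lambda|<1$, put $\nu=\frac{1-\Lambda}{2}$. The second inequality of Lemma 2.4 gives $|c_2-\nu c_1^2|\le |c_2-\tfrac12 c_1^2|+\tfrac{|\Lambda|}{2}|c_1|^2\le 2-\frac{1-|\Lambda|}{2}|c_1|^2$. So $|a_3|=B_1/K_3$ forces $c_1=0$, i.e.\ $a_2=0$. The two coefficient maxima are never reached together inside the region of (1).

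\emph{What you can legitimately state.} Under the hypothesis of (1), the correct conclusion is the bound you already derive, $|\mathcal{T}_2(2)|\le \frac{B_1^2}{K_2^2}+\frac{B_1^2}{K_3^2}$, which the rotated function attains when $|\Lambda|=1$. The same rotated function shows your bound in (2) is sharp: there it yields exactly $\frac{B_1^2}{K_2^2}+\frac{B_1^2}{K_3^2}\Lambda^2$. Part (1) as stated should be reported as an error, not proved; the same error occurs in Theorem 2.16(1).
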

By substituting $g(z)$ from Remark 1.2 into Theorem 3.8, we obtain the following corollaries.
\begin{corollary}
If $f$ is given by (\ref{e1.1}) and $f\in\mathcal{C}_{q}(\phi)$ , then

(1) If $|\frac{B_{2}}{B_{1}}+\frac{B_{1}}{q}|\leq1$, then
\begin{align}\nonumber
\mid \mathcal{T}_2(2)\mid=\mid a_{2}^2-a_{3}^{2}\mid=
\frac{B_{1}^{2}}{q(1+q)^{2}}\left(\frac{2+3q+2q^{2}+q^{3}}{(1+q+q^{2})^{2}}\right).
\end{align}

(2) If $|\frac{B_{2}}{B_{1}}+\frac{B_{1}}{q}|>1$, then
\begin{align}\nonumber
\mid \mathcal{T}_2(2)\mid=\mid a_{2}^2-a_{3}^{2}\mid\leq
\frac{B_{1}^{2}}{q^{2}(1+q)^{2}}\left(1+\frac{1}{(1+q+q^{2})^{2}}
\times\left|\frac{B_{2}}{B_{1}}+\frac{B_{1}}{q}\right|^{2}\right).
\end{align}
Let $q\rightarrow1^{-}$,

(1) If $|\frac{B_{2}}{B_{1}}+B_{1}|\leq1$, then
\begin{align}\nonumber
\mid \mathcal{T}_2(2)\mid=\mid a_{2}^2-a_{3}^{2}\mid\leq
\frac{2B_{1}^{2}}{9}.
\end{align}

(2) If $|\frac{B_{2}}{B_{1}}+B_{1}|>1$, then
\begin{align}\nonumber
\mid \mathcal{T}_2(2)\mid=\mid a_{2}^2-a_{3}^{2}\mid\leq
\frac{B_{1}^{2}}{4}+\frac{B_{1}^{2}}{36}
\times\left|\frac{B_{2}}{B_{1}}+B_{1}\right|^{2}.
\end{align}
\end{corollary}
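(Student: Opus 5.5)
This corollary is the $q$-specialization of Theorem 3.8, so the plan is to substitute the $q$-values of the coefficients $[n]_g$ into both bounds and simplify. No new coefficient estimate is needed.

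\textbf{Step 1: the $q$-coefficients.} By case (2) of Remark 1.2, take $g(z)=\frac{z}{(1-z)(1-qz)}$. Since $\alpha=1$, $\beta=q$ and $2(1-\gamma)=1$, we get $[n]_g=\frac{1-q^n}{1-q}=[n]_q$. Hence
\begin{align*}
[2]_g=1+q,\qquad [3]_g=1+q+q^{2}.
\end{align*}
This gives the three quantities that appear in Theorem 3.8:
\begin{align*}
[2]_g-1=q,\qquad [2]_g([2]_g-1)=q(1+q),\qquad [3]_g([3]_g-1)=q(1+q)(1+q+q^{2}).
\end{align*}
In particular the hypothesis $\bigl|\frac{B_2}{B_1}+\frac{B_1}{[2]_g-1}\bigr|\le 1$ (respectively $>1$) becomes $\bigl|\frac{B_2}{B_1}+\frac{B_1}{q}\bigr|\le 1$ (respectively $>1$). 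The two cases therefore correspond exactly to those of Theorem 3.8.

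\textbf{Step 2: case (1).} Substituting into Theorem 3.8(1) gives
\[
\frac{B_1^2}{q^2(1+q)^2}\left(1-\frac{1}{(1+q+q^2)^2}\right).
\]
The one algebraic point is the factorization
\[
(1+q+q^2)^2-1=(q+q^2)(2+q+q^2)=q(1+q)(2+q+q^2).
\]
With it the expression becomes $\frac{B_1^2(2+q+q^2)}{q(1+q)(1+q+q^2)^2}$. Writing this over the common factor $q(1+q)^2$ and expanding $(1+q)(2+q+q^2)=2+3q+2q^2+q^3$ gives exactly the displayed form.

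\textbf{Step 3: case (2).} Substituting into Theorem 3.8(2) and factoring out $\frac{B_1^2}{q^2(1+q)^2}$ leaves the second term multiplied by $\frac{1}{(1+q+q^2)^2}\bigl|\frac{B_2}{B_1}+\frac{B_1}{q}\bigr|^2$, which is the stated bound.

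\textbf{Step 4: the limit $q\to1^{-}$.} All expressions are continuous in $q$ on $(0,1]$, so we let $q\to1^{-}$ (case (3) of Remark 1.2).
\begin{itemize}
\item In case (1), the numerator is $2+3+2+1=8$ and the denominator is $1\cdot 4\cdot 9=36$, giving $\frac{8}{36}B_1^2=\frac{2B_1^2}{9}$.
\item In case (2), the bound becomes $\frac{B_1^2}{4}\bigl(1+\frac19|\cdot|^2\bigr)=\frac{B_1^2}{4}+\frac{B_1^2}{36}\bigl|\frac{B_2}{B_1}+B_1\bigr|^2$.
\end{itemize}

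I expect no real obstacle here. The only places requiring care are the factorization of $(1+q+q^2)^2-1$ and checking that the condition on $B_1,B_2$ transforms correctly under the substitution. I will simply inherit whatever status (equality versus upper bound) Theorem 3.8 asserts in each case.
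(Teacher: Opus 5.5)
Your route is the paper's route: the corollary is just the $\mathcal{T}_2(2)$ theorem (Theorem 3.8) with $[2]_q=1+q$ and $[3]_q=1+q+q^2$. Your algebra is also right: the factorization $(1+q+q^2)^2-1=q(1+q)(2+q+q^2)$, the translated hypothesis and both limiting constants check out.

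The gap is your last sentence. The paper states Theorem 3.8 without proof, and its case (1) is false, so there is nothing to inherit. Part (1) of the corollary, both for $q<1$ and for $q\to1^-$, cannot be proved.

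\textbf{As an equality.} Case (1) fails already for $f(z)=z$. This function lies in $\mathcal{C}_q(\phi)$, since the quotient is $\equiv 1=\phi(0)$, and it has $\mathcal{T}_2(2)=0$.

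\textbf{As an upper bound.} It fails here too. Take
\[
\phi(z)=1+\tfrac{q}{2}z-\tfrac{q}{4}z^{2},\qquad \frac{B_2}{B_1}+\frac{B_1}{q}=-\frac12+\frac12=0,
\]
so case (1) applies, and $\mathfrak{R}\,\phi(z)>1-\frac{3q}{4}>0$. Let $f$ solve $D_q(zD_qf)=\phi\,D_qf$, i.e.\ $u(z)=z$.
\begin{itemize}
\item With $b_n=[n]_qa_n$ this reads $([n]_q-1)b_n=B_1b_{n-1}+B_2b_{n-2}$, with $b_1=1$, $b_0=0$.
\item Since $[n]_q-1\ge q$, one gets $|b_n|\le1$. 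Hence $f$ is analytic in $\mathbb{U}$ and lies in $\mathcal{C}_q(\phi)$.
\item Here $b_2=\frac12$ and $b_3=0$, so $a_2=\frac{1}{2(1+q)}$ and $a_3=0$.
\item Therefore $|\mathcal{T}_2(2)|=\frac{1}{4(1+q)^2}$, strictly larger than the claimed $\frac{1}{4(1+q)^2}\bigl(1-(1+q+q^2)^{-2}\bigr)$.
\end{itemize}
For $q=1$ this is the explicit function with $f'(z)=\exp\bigl(\frac z2-\frac{z^2}{8}\bigr)$, so $f(z)=z+\frac{z^2}{4}+0\cdot z^3+\cdots$. It gives $|\mathcal{T}_2(2)|=\frac1{16}>\frac1{18}=\frac{2B_1^2}{9}$.

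\textbf{What you can prove.} Part (2) is true, and since Theorem 3.8 is unproved you should supply the argument yourself. Use $|a_2^2-a_3^2|\le|a_2|^2+|a_3|^2$ together with
\[
|a_2|\le\frac{B_1}{q(1+q)},\qquad |a_3|\le\frac{B_1}{q(1+q)(1+q+q^2)}\max\Bigl\{1,\Bigl|\frac{B_2}{B_1}+\frac{B_1}{q}\Bigr|\Bigr\},
\]
which is Theorem 3.2 at $[n]_g=[n]_q$. This gives exactly the stated bound in (2).

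For the classical version, apply Theorem 3.2 directly with $[n]_g=n$ (case (3) of Remark 1.2). That is cleaner than letting $q\to1^-$ in bounds that were proved for different classes.

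In case (1) the same argument only yields $\frac{B_1^2}{q^2(1+q)^2}\bigl(1+(1+q+q^2)^{-2}\bigr)$, with a plus sign. No argument produces the difference of the two squared bounds, and the example above shows none can.
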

Based on Theorem 3.4 and the proof of Lemma 2.2, we consider the symmetric Toeplitz determinant $\mathcal{T}_3(1)$ of $\mathcal{C}_{g}(\phi)$ and derive the following theorem.
\begin{theorem}
Let $f$ be given by (\ref{e1.1}). If $f\in\mathcal{C}_{g}(\phi)$, then
\begin{align}\nonumber
\mid \mathcal{T}_3(1)\mid &\leq \left(1+\frac{B_{1}}{[3]_{g}([3]_{g}-1)}\max\left\{1,\left|\frac{B_{2}}{B_{1}}+
\frac{B_{1}}{[2]_{g}-1}\left(1-\frac{2[3]_{g}([3]_{g}-1)}{[2]_{g}^{2}([2]_{g}-1)}\right)\right|\right\}\right)\\\nonumber
&\times\left(1+\frac{B_{1}}{[3]_{g}([3]_{g}-1)}\max\left\{1, \left|\frac{B_{2}}{B_{1}}+\frac{B_{1}}{[2]_{g}-1}\right|\right\}\right).
\end{align}
\end{theorem}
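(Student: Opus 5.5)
The proof will mirror that of Theorem 2.19, with the starlike coefficient formulas replaced by their convex counterparts (3.5) and (3.6). We start from the explicit form of the symmetric Toeplitz determinant recorded in the introduction, $\mathcal{T}_3(1)=(1+a_{3}-2a_{2}^2)(1-a_{3})$. The triangle inequality then gives
\begin{align}\nonumber
\mid \mathcal{T}_3(1)\mid\le \left(1+|a_{3}-2a_{2}^2|\right)\left(1+|a_{3}|\right).
\end{align}
This splits the problem into two independent coefficient functionals, each already controlled earlier in this section.

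For the second factor we invoke Theorem 3.2 directly:
\begin{align}\nonumber
|a_3|\le \frac{B_{1}}{[3]_{g}([3]_{g}-1)}\max\left\{1, \left|\frac{B_{2}}{B_{1}}+\frac{B_{1}}{[2]_{g}-1}\right|\right\}.
\end{align}
Adding $1$ produces exactly the second bracket of the statement.

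For the first factor we specialize Theorem 3.4 to $\mu=2$. Equivalently, in (3.7) we take
\begin{align}\nonumber
\iota=\frac{1}{2}\left(1-\frac{B_{2}}{B_{1}}-\frac{B_{1}}{[2]_{g}-1}\left(1-\frac{2[3]_{g}([3]_{g}-1)}{[2]_{g}^{2}([2]_{g}-1)}\right)\right),
\end{align}
which gives $a_3-2a_2^2=\frac{B_1}{2[3]_g([3]_g-1)}(c_2-\iota c_1^2)$. Applying Lemma 2.2 then yields $|c_2-\iota c_1^2|\le 2\max\{1,|2\iota-1|\}$. A one-line check shows that $2\iota-1$ equals minus the quantity inside the absolute value in the first bracket, so the modulus matches the statement. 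Adding $1$ and multiplying the two factors completes the argument.

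No real obstacle is expected. The one point that needs care is the bookkeeping for $\iota$ at $\mu=2$: the coefficient of $\mu$ must be $\frac{[3]_{g}([3]_{g}-1)}{[2]_{g}^{2}([2]_{g}-1)}$, coming from the ratio of the normalizations in (3.5) and (3.6). We also need $B_1>0$ and $[2]_g\neq 1$, $[3]_g\neq 0,1$ implicitly, so that all the denominators make sense; these are the same standing assumptions already used in Theorems 3.2 and 3.4.
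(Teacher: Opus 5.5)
Your proposal is correct and matches the paper's argument: the paper also uses $\mathcal{T}_3(1)=(1+a_3-2a_2^2)(1-a_3)$ with the triangle inequality (its (2.10)), bounds $|a_3-2a_2^2|$ by Theorem 3.4 at $\mu=2$, and bounds $|a_3|$ via Lemma 2.2 applied to (3.6), i.e.\ Theorem 3.2. One small precision: for the prefactor $\frac{B_1}{[3]_g([3]_g-1)}$ to appear without a modulus you need $[3]_g([3]_g-1)>0$, not merely $[3]_g\neq 0,1$, and the paper also leaves this assumption implicit.
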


\begin{proof}

Applying Theorem 3.4 and Lemma 2.2 to the inequality (2.10), we obtain
\begin{align}\nonumber
\mid \mathcal{T}_3(1)\mid &\leq \left(1+\frac{B_{1}}{[3]_{g}([3]_{g}-1)}\max\left\{1,\left|\frac{B_{2}}{B_{1}}+
\frac{B_{1}}{[2]_{g}-1}\left(1-\frac{2[3]_{g}([3]_{g}-1)}{[2]_{g}^{2}([2]_{g}-1)}\right)\right|\right\}\right)\\\nonumber
&\times\left(1+\frac{B_{1}}{[3]_{g}([3]_{g}-1)}\max\left\{1, \left|\frac{B_{2}}{B_{1}}+\frac{B_{1}}{[2]_{g}-1}\right|\right\}\right).
\end{align}
Thus, Theorem 3.10 holds.
\end{proof}
By taking  $g(z)$ from (2) in Remark 1.2 into Theorem 3.10, we acquire the following corollaries.
\begin{corollary}
If $f$ is given by (\ref{e1.1}) and $f\in\mathcal{C}_{q}(\phi)$ , then
\begin{align}\nonumber
\mid \mathcal{T}_3(1)\mid &\leq \left(1+\frac{B_{1}}{q(1+q)(1+q+q^{2})}\max\left\{1,\left|\frac{B_{2}}{B_{1}}-
\frac{B_{1}(1+q+2q^{2})}{q(1+q)}\right|\right\}\right)\\\nonumber
&\times\left(1+\frac{B_{1}}{q(1+q)(1+q+q^{2})}\max\left\{1, \left|\frac{B_{2}}{B_{1}}+\frac{B_{1}}{q}\right|\right\}\right).
\end{align}
When $q\rightarrow1^{-}$,
\begin{align}\nonumber
\mid \mathcal{T}_3(1)\mid \leq \left(1+\frac{B_{1}}{6}\max\left\{1,\left|\frac{B_{2}}{B_{1}}-
2B_{1}\right|\right\}\right)\times\left(1+\frac{B_{1}}{6}\max\left\{1, \left|\frac{B_{2}}{B_{1}}+B_{1}\right|\right\}\right).
\end{align}
\end{corollary}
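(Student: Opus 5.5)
The plan is to specialize Theorem 3.10 to the $q$-case; nothing beyond algebraic substitution should be needed. By Remark 1.2(2), taking $\gamma=1/2$, $\alpha=1$, $\beta=q$ gives $g(z)=\frac{z}{(1-z)(1-qz)}$. Its Taylor coefficients are
\[
[n]_g=\frac{1-q^n}{1-q}=[n]_q .
\]
In particular $[2]_g=1+q$ and $[3]_g=1+q+q^2$. I would record this first, since every quantity in Theorem 3.10 is expressed through $[2]_g$ and $[3]_g$.

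Next I would compute the building blocks:
\[
[2]_g-1=q,\qquad [3]_g-1=q(1+q),\qquad [3]_g([3]_g-1)=q(1+q)(1+q+q^2).
\]
This immediately turns the common prefactor $\frac{B_1}{[3]_g([3]_g-1)}$ into $\frac{B_1}{q(1+q)(1+q+q^2)}$. It also turns $\frac{B_1}{[2]_g-1}$ in the second factor into $\frac{B_1}{q}$, which gives the second bracket of the corollary.

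The only step needing care is the first bracket. Here I would compute
\[
\frac{2[3]_g([3]_g-1)}{[2]_g^2([2]_g-1)}=\frac{2q(1+q)(1+q+q^2)}{(1+q)^2q}=\frac{2(1+q+q^2)}{1+q},
\]
so that
\[
1-\frac{2(1+q+q^2)}{1+q}=-\frac{1+q+2q^2}{1+q}.
\]
Multiplying by $\frac{B_1}{q}$ then yields exactly $\frac{B_2}{B_1}-\frac{B_1(1+q+2q^2)}{q(1+q)}$ inside the absolute value.

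Finally, I would let $q\to1^-$ in each expression. The prefactor tends to $\frac{B_1}{6}$, and $\frac{1+q+2q^2}{q(1+q)}\to 2$, giving the $-2B_1$ term. The term $\frac{B_1}{q}$ tends to $B_1$. All of these limits are taken in finitely many continuous expressions, so the limiting inequality follows from the $q$-inequality.
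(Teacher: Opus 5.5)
Your proposal is correct and follows the paper's route exactly: substitute $[2]_g=1+q$ and $[3]_g=1+q+q^2$ into Theorem 3.10, simplify, and then let $q\to1^-$. For the limiting case it is slightly cleaner to apply Theorem 3.10 directly with $g(z)=z/(1-z)^2$, so that $[2]_g=2$ and $[3]_g=3$. This gives the same constants without having to pass to the limit in an inequality that was stated for $f\in\mathcal{C}_q(\phi)$ at fixed $q$.
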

Similarly, based on the proof of Theorem 3.6 and Lemma 2.3, we consider the symmetric Toeplitz determinant $\mathcal{T}_3(1)$ of $\mathcal{C}_{g}(\phi)$ and establish the following theorem.
\begin{theorem}
Let $f$ be given by (\ref{e1.1}). If $f\in\mathcal{C}_{g}(\phi)$, then
\begin{align}\nonumber
\mid \mathcal{T}_3(1)\mid &\leq \left(1+\frac{B_{1}}{[3]_{g}([3]_{g}-1)}\times\left(\frac{B_{1}}{[2]_{g}-1}\left(2\frac{[3]_{g}([3]_{g}-1)}{[2]_{g}^{2}([2]_{g}-1)}-1\right)-\frac{B_{2}}{B_{1}}\right)\right)\\\nonumber
&\times\left(1+\frac{B_{1}}{[3]_{g}([3]_{g}-1)}\max\left\{1, \left|\frac{B_{2}}{B_{1}}+\frac{B_{1}}{[2]_{g}-1}\right|\right\}\right).
\end{align}
\end{theorem}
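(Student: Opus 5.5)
We follow the route used for Theorem 2.22 in the starlike case, now with the convex-class coefficient formulas (3.5) and (3.6). Since $\mathcal{T}_3(1)=(1+a_3-2a_2^2)(1-a_3)$, the triangle inequality gives, exactly as in (2.10),
\begin{align*}
\mid \mathcal{T}_3(1)\mid\le \left(1+|a_3-2a_2^2|\right)\left(1+|a_3|\right).
\end{align*}
It is therefore enough to bound the two factors separately.

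For the second factor we use Theorem 3.2 directly:
\begin{align*}
|a_3|\le \frac{B_1}{[3]_g([3]_g-1)}\max\left\{1,\left|\frac{B_2}{B_1}+\frac{B_1}{[2]_g-1}\right|\right\}.
\end{align*}
This produces the second bracket of the statement.

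For the first factor we specialize the computation in the proof of Theorem 3.4 to $\mu=2$. This gives
\begin{align*}
a_3-2a_2^2=\frac{B_1}{2[3]_g([3]_g-1)}\left(c_2-\iota c_1^2\right),
\end{align*}
where
\begin{align*}
\iota=\frac12\left(1-\frac{B_2}{B_1}-\frac{B_1}{[2]_g-1}\left(1-\frac{2[3]_g([3]_g-1)}{[2]_g^2([2]_g-1)}\right)\right).
\end{align*}
We are now in the real-$\mu$ setting of Theorem 3.6. We work in its third regime, $\mu=2\ge\iota_2$, which is equivalent to $\iota\ge1$. There Lemma 2.3 gives $|c_2-\iota c_1^2|\le 4\iota-2$, and hence
\begin{align*}
|a_3-2a_2^2|\le\frac{B_1}{2[3]_g([3]_g-1)}(4\iota-2).
\end{align*}
Next we simplify $2\iota-1$. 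Expanding the definition of $\iota$,
\begin{align*}
2\iota-1=-\frac{B_2}{B_1}-\frac{B_1}{[2]_g-1}+\frac{B_1}{[2]_g-1}\cdot\frac{2[3]_g([3]_g-1)}{[2]_g^2([2]_g-1)}=\frac{B_1}{[2]_g-1}\left(\frac{2[3]_g([3]_g-1)}{[2]_g^2([2]_g-1)}-1\right)-\frac{B_2}{B_1}.
\end{align*}
Substituting this gives the first bracket of the statement. Multiplying the two bounds completes the argument.

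The main obstacle is the applicability of Lemma 2.3. That lemma needs $\iota$ to be real, which holds because $B_1$, $B_2$ and the $[n]_g$ are real in the cases considered. It also needs $\iota\ge1$, i.e.\ $2\ge\iota_2$. Without that condition, the first factor must be replaced by $\max\{1,|1-2\iota|\}$, which is the form taken in Theorem 3.10. I would therefore either state the implicit hypothesis $\iota\ge 1$ explicitly, or note that in the remaining regimes the bound of Theorem 3.10 applies instead. This mirrors how Theorem 2.22 relied on Theorem 2.13.
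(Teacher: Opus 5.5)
Your proof is the paper's own argument: the bound (2.10), Theorem 3.2 for $|a_3|$, and the third case of Theorem 3.6 at $\mu=2$, with $2\iota-1$ simplified exactly as you do. Your version also keeps the factor $[3]_g([3]_g-1)$ that the paper's intermediate display drops. Your caveat is well founded and not cosmetic: the paper uses the regime $2\ge\iota_2$ (i.e.\ $\iota\ge 1$, with real $[n]_g$) without checking it, and the bound genuinely fails outside it. Indeed $f(z)=z$ lies in every $\mathcal{C}_{g}(\phi)$ with $\mathcal{T}_3(1)=1$, while, for example, $[2]_g=3$, $[3]_g=2$, $\phi(z)=\frac{1+z}{1-z}$ make the right-hand side equal to $-\frac{7}{3}$.
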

\begin{proof}
By the determinant $\mathcal{T}_3(1)$ and Theorem 2.18,
\begin{align}\nonumber
\mid a_{3}-2a_{2}^{2}\mid & \leq \frac{B_{1}}{2([3]_{g}-1)}(4\iota - 2) \\\nonumber
&\leq\frac{B_{1}}{[3]_{g}([3]_{g}-1)}\times\left(\frac{B_{1}}{[2]_{g}-1}\left(2\frac{[3]_{g}([3]_{g}-1)}{[2]_{g}^{2}([2]_{g}-1)}-1\right)-\frac{B_{2}}{B_{1}}\right).
\end{align}
Hence
\begin{align}\nonumber
\mid \mathcal{T}_3(1)\mid &\leq \left(1+\frac{B_{1}}{[3]_{g}([3]_{g}-1)}\times\left(\frac{B_{1}}{[2]_{g}-1}\left(2\frac{[3]_{g}([3]_{g}-1)}{[2]_{g}^{2}([2]_{g}-1)}-1\right)-\frac{B_{2}}{B_{1}}\right)\right)\\\nonumber
&\times\left(1+\frac{B_{1}}{[3]_{g}([3]_{g}-1)}\max\left\{1, \left|\frac{B_{2}}{B_{1}}+\frac{B_{1}}{[2]_{g}-1}\right|\right\}\right).
\end{align}
Thus, Theorem 3.12 holds.
\end{proof}
Substituting $g(z)$ from (2) in Remark 1.2 into Theorem 3.12, we get the following corollaries.
\begin{corollary}
If $f$ is given by (\ref{e1.1}) and $f\in\mathcal{C}_{q}(\phi)$, then
\begin{align}\nonumber
\mid \mathcal{T}_3(1)\mid &\leq \left(1+\frac{B_{1}}{q(1+q)(1+q+q^{2})}\times\left(\frac{B_{1}(1+q+2q^{2})}{q(1+q)}-\frac{B_{2}}{B_{1}}\right)\right)\\\nonumber
&\times\left(1+\frac{B_{1}}{q(1+q)(1+q+q^{2})}\max\left\{1, \left|\frac{B_{2}}{B_{1}}+\frac{B_{1}}{q}\right|\right\}\right).
\end{align}
When $q\rightarrow1^{-}$,
\begin{align}\nonumber
\mid \mathcal{T}_3(1)\mid &\leq \left(1+\frac{B_{1}}{6}\times\left(2B_{1}-\frac{B_{2}}{B_{1}}\right)\right)
\times\left(1+\frac{B_{1}}{6}\max\left\{1, \left|\frac{B_{2}}{B_{1}}+B_{1}\right|\right\}\right).
\end{align}
\end{corollary}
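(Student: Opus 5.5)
The plan is to obtain this corollary by direct specialization of Theorem 3.12, with no new analytic input. Take $g$ as in case (2) of Remark 1.2, i.e. $g(z)=\frac{z}{(1-qz)(1-z)}$ with $q\in(0,1)$. Its Taylor coefficients are $g_n=\frac{1-q^n}{1-q}=[n]_q$, so that $[2]_g=1+q$ and $[3]_g=1+q+q^2$.

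First I compute the building blocks that enter Theorem 3.12:
\begin{align*}
[2]_g-1&=q, & [3]_g-1&=q(1+q),\\
[3]_g([3]_g-1)&=q(1+q)(1+q+q^2), & [2]_g^2([2]_g-1)&=q(1+q)^2.
\end{align*}
From these, the ratio in the first factor is
\[
\frac{[3]_g([3]_g-1)}{[2]_g^2([2]_g-1)}=\frac{1+q+q^2}{1+q}.
\]

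Next I simplify the expression inside the first factor. We have
\[
2\,\frac{1+q+q^2}{1+q}-1=\frac{1+q+2q^2}{1+q}.
\]
Multiplying by $\frac{B_1}{[2]_g-1}=\frac{B_1}{q}$ gives $\frac{B_1(1+q+2q^2)}{q(1+q)}$. The common prefactor $\frac{B_1}{[3]_g([3]_g-1)}$ becomes $\frac{B_1}{q(1+q)(1+q+q^2)}$. In the second factor, $\frac{B_1}{[2]_g-1}$ becomes $\frac{B_1}{q}$.

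Substituting all of these into Theorem 3.12 yields the first displayed inequality exactly. The same reduction appears in Corollary 3.11, where the term $\frac{B_1(1+q+2q^2)}{q(1+q)}$ arose identically, which serves as a consistency check.

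For the case $q\to1^-$, every quantity above is a rational function of $q$ that is continuous at $q=1$ with nonvanishing denominators. Hence I pass to the limit termwise:
\[
q(1+q)(1+q+q^2)\to 6,\qquad \frac{1+q+2q^2}{q(1+q)}\to 2,\qquad \frac{B_1}{q}\to B_1.
\]
This gives the factors $1+\frac{B_1}{6}\left(2B_1-\frac{B_2}{B_1}\right)$ and $1+\frac{B_1}{6}\max\left\{1,\left|\frac{B_2}{B_1}+B_1\right|\right\}$. This is legitimate because the bound holds for each fixed $q\in(0,1)$, and $g$ tends coefficientwise to $z/(1-z)^2$, which is case (3) of Remark 1.2.

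There is no real obstacle here; the only step needing care is the algebraic simplification of $2\cdot\frac{1+q+q^2}{1+q}-1$. I would double-check it against the parallel computation in Corollary 3.11.
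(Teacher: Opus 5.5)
\textbf{There is a genuine gap.} Your substitution is exactly what the paper does: it inserts case (2) of Remark 1.2 into the general-$g$ result, Theorem 3.12. Your algebra is also correct: $[2]_g-1=q$, $[3]_g([3]_g-1)=q(1+q)(1+q+q^2)$, $2\frac{1+q+q^2}{1+q}-1=\frac{1+q+2q^2}{1+q}$, and the limits $6$ and $2$.

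The problem is the phrase ``no new analytic input'': Theorem 3.12 is not valid as stated, and the corollary inherits the defect. In that proof, $a_3-2a_2^2=\frac{B_1}{2[3]_g([3]_g-1)}(c_2-\iota c_1^2)$ with $\iota$ taken at $\mu=2$. The estimate $|c_2-\iota c_1^2|\le 4\iota-2$ is the third case of Lemma 2.3, which needs $\iota\ge 1$, i.e.\ $2\ge\iota_2$ in the real-$\mu$ Fekete--Szeg\"{o} theorem. For $\iota<1$ the correct bound is $2\max\{1,|2\iota-1|\}$, and $4\iota-2$ is too small; it is even negative when $\iota<\frac12$. In your setting $2\iota-1=\frac{B_1(1+q+2q^2)}{q(1+q)}-\frac{B_2}{B_1}$. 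So your first factor is justified only when this number is at least $1$, and nothing in the hypotheses guarantees that.

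\textbf{The statement actually fails.} The function $f(z)=z$ lies in $\mathcal{C}_q(\phi)$ for every admissible $\phi$, since the quotient is identically $1=\phi(0)$, and it has $\mathcal{T}_3(1)=1$. Take $\phi(z)=\frac{9}{10}\cdot\frac{1+z^2}{1-z^2}+\frac{1}{10}\cdot\frac{1+z}{1-z}$. It has positive real part, $\phi(0)=1$, real coefficients, $B_1=\frac15$ and $B_2=2$.
\begin{itemize}
\item For $q=\frac12$ the first factor is $1-\frac{16}{105}\cdot\frac{142}{15}=-\frac{697}{1575}<0$, while the second factor is positive. So the claimed upper bound for $|\mathcal{T}_3(1)|$ is negative.
\item In the limit case the right-hand side is $\frac{17}{25}\cdot\frac{67}{50}=\frac{1139}{1250}<1$.
\item If you insist on a univalent Ma--Minda $\phi$, take $\phi(z)=1+10^{-3}z/(1-\frac{9}{10}z)^2$ with $q=\frac{1}{10}$. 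The right-hand side is still about $0.99995<1$.
\end{itemize}

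\textbf{What is missing is a hypothesis.} Your computation is a complete proof under the extra assumption $\frac{B_1(1+q+2q^2)}{q(1+q)}-\frac{B_2}{B_1}\ge 1$, or $2B_1-\frac{B_2}{B_1}\ge 1$ for $q\to1^-$. In that regime the bound coincides with Corollary 3.11, which you already noticed. Without it, replace the first factor by
\[
1+\frac{B_1}{q(1+q)(1+q+q^2)}\max\left\{1,\left|\frac{B_1(1+q+2q^2)}{q(1+q)}-\frac{B_2}{B_1}\right|\right\},
\]
which is exactly Corollary 3.11 and follows from Lemma 2.2 with no sign condition.

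\textbf{A smaller point.} For the $q\to1^-$ statement, a limit of bounds valid on the classes $\mathcal{C}_q(\phi)$ does not by itself bound functions in $\mathcal{C}_{1^-}(\phi)$, since these need not lie in any $\mathcal{C}_q(\phi)$. Apply the general-$g$ result directly with $g(z)=z/(1-z)^2$, so $[2]_g=2$ and $[3]_g=3$; this gives the same numbers legitimately.
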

Finally, we estimate the second Hankel determinant $\mathcal{H}_2(1)$ for the function class $\mathcal{C}_{g}(\phi)$, leading to the following theorem and corollary. As a consequence of Theorem 3.4, immediately we give the theorem below.
\begin{theorem}
Let $f$ be given by (\ref{e1.1}). If $f\in\mathcal{C}_{g}(\phi)$, then
\begin{align}\nonumber
\mid \mathcal{H}_2(1)\mid &\leq \frac{B_{1}}{[3]_{g}([3]_{g}-1)}\max\left\{1,\left|\frac{B_{2}}{B_{1}}+
\frac{B_{1}}{[2]_{g}-1}\left(1-\frac{[3]_{g}([3]_{g}-1)}{[2]_{g}^{2}([2]_{g}-1)}\right)\right|\right\}.
\end{align}
\end{theorem}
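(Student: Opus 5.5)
The plan is to observe that $\mathcal{H}_2(1)=a_3-a_2^2$ is exactly the Fekete--Szeg\H{o} functional $a_3-\mu a_2^2$ with $\mu=1$. The bound then follows by specializing Theorem 3.4, which holds for every complex $\mu$, hence in particular for $\mu=1$. No new coefficient analysis is required; the work is already contained in the representations (3.5) and (3.6).

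Concretely, I will start from the identity (\ref{e2.16}) in the proof of Theorem 3.4. With $\mu=1$ it reads
\begin{align*}
a_3-a_2^2=\frac{B_{1}}{2[3]_{g}([3]_{g}-1)}\left(c_2-\iota c_1^2\right),
\end{align*}
where
\begin{align*}
\iota=\frac{1}{2}\left(1-\frac{B_{2}}{B_{1}}-\frac{B_{1}}{[2]_{g}-1}\left(1-\frac{[3]_{g}([3]_{g}-1)}{[2]_{g}^{2}([2]_{g}-1)}\right)\right).
\end{align*}
Here $c_1,c_2$ are the coefficients of the function $p=(1+u)/(1-u)\in\mathcal{P}$ attached to the Schwarz function $u$ from (\ref{e2.10}).

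Next I apply Lemma 2.2 with the complex parameter $\iota$, which gives $|c_2-\iota c_1^2|\le 2\max\{1,|2\iota-1|\}$. Computing
\begin{align*}
2\iota-1=-\left(\frac{B_2}{B_1}+\frac{B_1}{[2]_g-1}\left(1-\frac{[3]_{g}([3]_{g}-1)}{[2]_{g}^{2}([2]_{g}-1)}\right)\right),
\end{align*}
so its modulus is exactly the quantity inside the maximum in the statement. Multiplying by $B_1/(2[3]_g([3]_g-1))$ then yields the claimed estimate for $|\mathcal{H}_2(1)|$.

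The only point needing care, and thus the mild obstacle, is the algebraic simplification of $2\iota-1$ at $\mu=1$: I have to check that the sign and the grouping of the $\mu$-dependent term come out as written. This is routine. I also note that the positivity of the prefactor $B_1/([3]_g([3]_g-1))$, implicit throughout Section 3, is assumed here as well.
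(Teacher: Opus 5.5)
Your proposal is correct and follows the paper's route: the paper also obtains this bound as an immediate consequence of Theorem 3.4 with $\mu=1$, using $\mathcal{H}_2(1)=a_3-a_2^2$. Your computation that $|2\iota-1|$ at $\mu=1$ equals the quantity inside the maximum is accurate, and your caveat that the prefactor $B_1/([3]_g([3]_g-1))$ is positive (with $[2]_g,[3]_g\neq 0,1$) is an assumption the paper also leaves implicit.
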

\begin{corollary}
If $f$ is given by (\ref{e1.1}) and $f\in\mathcal{C}_{q}(\phi)$ , then
\begin{align}\nonumber
\mid \mathcal{H}_2(1)\mid &\leq \frac{B_{1}}{q(1+q)(1+q+q^{2})}\max\left\{1,\left|\frac{B_{2}}{B_{1}}-
\frac{qB_{1}}{1+q}\right|\right\}.
\end{align}
When $q\rightarrow1^{-}$,
\begin{align}\nonumber
\mid \mathcal{H}_2(1)\mid &\leq \frac{B_{1}}{6}\max\left\{1,\left|\frac{B_{2}}{B_{1}}-\frac{B_{1}}{2}\right|\right\}.
\end{align}
\end{corollary}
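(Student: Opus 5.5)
The bound is the case $\mu=1$ of Theorem 3.4, since $\mathcal{H}_2(1)=a_3-a_2^2$. I will redo that specialization explicitly so the constant is visibly correct.

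First, I recall from (3.5) and (3.6) the coefficient formulas obtained from the subordination $\frac{D_{g}(zD_{g}f(z))}{D_{g}f(z)}=\phi(u(z))$ via $u=(p-1)/(p+1)$ with $p\in\mathcal{P}$. These are
\begin{align*}
a_2=\frac{c_1B_1}{2[2]_g([2]_g-1)},\qquad a_3=\frac{B_1}{2[3]_g([3]_g-1)}\left[c_2-\frac12\left(1-\frac{B_2}{B_1}-\frac{B_1}{[2]_g-1}\right)c_1^2\right].
\end{align*}
Subtracting $a_2^2$ and factoring out $\frac{B_1}{2[3]_g([3]_g-1)}$ gives
\begin{align*}
a_3-a_2^2=\frac{B_1}{2[3]_g([3]_g-1)}\bigl(c_2-\iota c_1^2\bigr),
\end{align*}
where $\iota$ is the constant from the proof of Theorem 3.4 evaluated at $\mu=1$:
\begin{align*}
\iota=\frac12\left(1-\frac{B_2}{B_1}-\frac{B_1}{[2]_g-1}\left(1-\frac{[3]_g([3]_g-1)}{[2]_g^2([2]_g-1)}\right)\right).
\end{align*}
The only check needed is that the term $-\frac{c_1^2B_1^2}{4[2]_g^2([2]_g-1)^2}$ is absorbed correctly. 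After factoring, it contributes $\frac{B_1[3]_g([3]_g-1)}{2[2]_g^2([2]_g-1)^2}$ to $\iota$, and this matches the displayed expression.

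Next, I apply Lemma 2.2, which gives $|c_2-\iota c_1^2|\le 2\max\{1,|2\iota-1|\}$. Computing directly,
\[
2\iota-1=-\left(\frac{B_2}{B_1}+\frac{B_1}{[2]_g-1}\left(1-\frac{[3]_g([3]_g-1)}{[2]_g^2([2]_g-1)}\right)\right).
\]
The absolute value removes the sign, and multiplying by the prefactor cancels the $2$. This yields exactly the stated bound on $|\mathcal{H}_2(1)|$.

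No real obstacle is expected. The only point needing care is the algebra in absorbing the $a_2^2$ term into $\iota$ and confirming that $|2\iota-1|$ reproduces the expression in the statement. Equivalently, one can simply cite Theorem 3.4 with $\mu=1$.
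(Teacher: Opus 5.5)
Your route is the paper's: $\mathcal{H}_2(1)=a_3-a_2^2$ is the case $\mu=1$ of Theorem 3.4, which is Theorem 3.13. Your algebra for $\iota$ and for $2\iota-1$ is correct.

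However, what you have actually established is only the general-$g$ inequality. The corollary is its specialization to $g(z)=\frac{z}{(1-qz)(1-z)}$. You assert that specialization (``this yields exactly the stated bound'') but never carry it out, and it needs to be written down.

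Here $[n]_g=\frac{1-q^n}{1-q}$, so $[2]_g-1=q$ and $[3]_g([3]_g-1)=q(1+q)(1+q+q^2)$. These are positive for $q\in(0,1)$, so taking the modulus of the prefactor loses nothing, and it equals $\frac{B_1}{q(1+q)(1+q+q^2)}$. Moreover $\frac{[3]_g([3]_g-1)}{[2]_g^2([2]_g-1)}=\frac{1+q+q^2}{1+q}$, so
\[
\frac{B_1}{[2]_g-1}\left(1-\frac{[3]_g([3]_g-1)}{[2]_g^2([2]_g-1)}\right)=\frac{B_1}{q}\cdot\frac{-q^2}{1+q}=-\frac{qB_1}{1+q},
\]
which is the term appearing in the first display.

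For the second display, do not pass to a limit in an inequality over varying classes. Instead use the case $g(z)=z/(1-z)^2$ with $[n]_g=n$, which is how the paper defines $\mathcal{C}_{1^{-}}(\phi)$ and which your general argument already covers. It gives the prefactor $\frac{B_1}{3\cdot 2}=\frac{B_1}{6}$ and the inner term $\frac{B_2}{B_1}+B_1\left(1-\frac{6}{4}\right)=\frac{B_2}{B_1}-\frac{B_1}{2}$.

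With these few lines added, the proof is complete.
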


\section{ Conclusions }
\setcounter{equation}{0} \setcounter{theorem}{0}

In this paper the most innovation is that we unify all derivative operators, even classical derivative, in the literature. Similarly, in the sense of the $g$-derivative operator
the primary objective of this research is to determine the upper bounds of the coefficients $a_{2}$ and $a_{3}$ for two function classes $\mathcal{S}^{\ast }_{g}(\phi)$ and $\mathcal{C}_{g}(\phi)$. 
Additionally, the related results for Fekete-Szeg\"{o} functional inequalities, Toeplitz determinants and Hankel determinants are proved. Correspondingly, some corollaries are derived for various $g(z)$. Besides, drawing upon the findings presented in this paper, 
the investigations of Bohr radius can be pursued. In the future, researchers may further explore functional inequalities for another classes of special functions by utilizing the $g$-derivative operator as defined herein.


\bigskip

\noindent An Huang, Pinhong Long

\medskip

\noindent School of Mathematics and Statistics, Ningxia
University, Yinchuan, Ningxia 750021, People's Republic of China

\smallskip

\noindent{\it E-mail:} \texttt{longph@nxu.edu.cn}

\bigskip

\noindent Halit Orhan

\medskip

\noindent Department of Mathematics, Faculty of Science,  Ataturk University, Erzurum 25240, Turkey

\smallskip

\noindent{\it E-mail:} \texttt{orhanhalit607@gmail.com}

\bigskip

\noindent Huo Tang

\medskip

\noindent School of Mathematics and Computer Sciences, Chifeng University, Chifeng, Inner Mongolia, 024000, China

\smallskip

\noindent{\it E-mail:} \texttt{thth2009@163.com}

\end{document}